\pdfoutput=1

\documentclass[english,11pt]{article}
\include{macros}

\begin{document}

\title{MINTS: Minimalist Thompson Sampling}

\author{Kaizheng Wang\thanks{Department of IEOR and Data Science Institute, Columbia University. Email: \texttt{kaizheng.wang@columbia.edu}.}
}

\date{This version: \today}

\maketitle

\begin{abstract}
The Bayesian paradigm offers principled tools for sequential decision-making under uncertainty, but its reliance on a probabilistic model for all parameters can hinder the incorporation of complex structural constraints. We introduce a minimalist Bayesian framework that places a prior only on the location of the optimum, while eliminating nuisance parameters through profile likelihood. This yields a generalized posterior that naturally accommodates structural constraints. As a direct instantiation, we develop MINimalist Thompson Sampling (MINTS). For multi-armed bandits with mean constraints, we establish near-optimal non-asymptotic regret guarantees and sharp almost-sure asymptotic regret characterizations. In particular, MINTS attains the classical Lai--Robbins constant in the unstructured setting and automatically adapts to unimodal structure, achieving the sharp constant determined only by the immediate neighbors of the optimal arm.
\end{abstract}
\noindent{\bf Keywords:} Stochastic optimization, Bayesian method, Thompson sampling, Profile likelihood, Regret analysis.

\section{Introduction}\label{sec-intro}

Effective sequential decision-making requires balancing exploration and exploitation: gathering information versus acting on current knowledge. Simple strategies such as explore-then-commit split the horizon into separate phases, but choosing the split is difficult. Adaptive frequentist methods guide exploration by adding uncertainty adjustments to certainty-equivalent decisions \citep{LRo85,ACF02,GLa97}, or by designing sampling rules that track the minimal exploration rates identified by regret lower bounds \citep{CMP17,DSK20,VGo24}. The Bayesian paradigm offers a principled alternative by maintaining a posterior distribution over unknown quantities \citep{Tho33,JSW98}. However, specifying a full prior can be difficult when prior information is expressed through structural constraints, such as shape restrictions, rather than through a tractable probabilistic model.

We develop a minimalist Bayesian framework that places a prior only on the location of the optimum and handles the remaining parameters through profile likelihood \citep{BCo94}. This reduced-dimensional prior, combined with profile likelihood, yields a generalized posterior distribution over the optimizer. Based on this, we propose a MINimalist Thompson Sampling (\Alg) algorithm, and derive sharp regret bounds for structured multi-armed bandits.

A preliminary version of this paper appeared in the NeurIPS 2025 MLxOR Workshop 
\citep{Wan25anonymous}. The present paper adds a sharp asymptotic analysis and numerical experiments.

\paragraph{Related work.}

Our work contributes a general Bayesian approach to stochastic optimization under structural constraints, a setting that has largely been studied case by case \citep{PTR17,SGF20}. The framework builds on Thompson sampling \citep{Tho33,GMM14,RVK18}, also known as posterior sampling or probability matching, because actions are drawn according to their posterior probability of being optimal \citep{Sco10}. Standard Thompson sampling induces this posterior from a probabilistic model for the full problem instance, whereas our approach works directly with the optimizer itself. Related ideas appear in full-information online learning \citep{LWa89,Vov90,CFH97,Cat04,BHW16}, but we study the more challenging partial-feedback setting.

Existing Bayesian methods that reason about the optimum are either tailored to specific structures \citep{WFH13}, or still rely on full probabilistic models for belief updates \citep{VVW09,HSc12,HHG14,RVa18}. Closest to our work, \citet{SNO21} augment a standard model with a prior on the optimum, whereas we use profile likelihood to avoid modeling nuisance parameters.



\section{Preliminaries}\label{sec-preliminaries}

A stochastic optimization agent seeks to maximize an unknown function $f$ over a decision set $\spaceofaction$:
\begin{align}
	\max_{\action \in \spaceofaction} f(\action).
	\label{eqn-problem}
\end{align}
The agent learns about the optimum by sequentially interacting with the environment. Starting from an empty dataset $\dataset_0=\varnothing$, at each period $t\in\ZZ_+$, the agent selects a decision $\action_t\in\spaceofaction$ based on past data $\dataset_{t-1}$, receives randomized feedback $\feedback_t$, and updates the dataset to $\dataset_t=\dataset_{t-1}\cup\{(\action_t,\feedback_t)\}$. Performance over $T$ periods is commonly measured by the cumulative regret $\sum_{t=1}^T  [\max_{\action\in\spaceofaction} f(\action)-f(\action_t) ]$ or the simple regret $\max_{\action\in\spaceofaction} f(\action)-f(\action_T)$.

\begin{example}[Multi-armed bandit with mean constraints]\label{example-MAB}
	The decision set is a collection of $K\in\ZZ_+$ arms, i.e., $\spaceofaction=[K]$. Each arm $\action$ is associated with a reward distribution $\distP_{\action}$ over $\RR$, and the objective value $f(\action)$ is the expected reward $\EE_{\rewardrv\sim\distP_{\action}}\rewardrv$. The mean vector $(f(1),\dots,f(K))$ belongs to a given set $\spaceofparameter\subseteq\RR^K$. At time $t$, the feedback $\feedback_t$ is drawn from $\distP_{\action_t}$.
\end{example}

When $\spaceofparameter=\RR^K$, Example~\ref{example-MAB} reduces to the classical stochastic multi-armed bandit. In many applications, prior knowledge can be expressed through structural constraints. 
One example is the \emph{unimodal bandit} \citep{YMa11,CPr14}, in which the expected reward increases and then decreases along the arm ordering:
\begin{align}
	\spaceofparameter = \spaceofparameter_1 \cup \cdots \cup \spaceofparameter_K,
	\qquad\text{where}\qquad
	\spaceofparameter_j
	=
	\left\{
	\parametervector\in\RR^K:
	\parameter_1\le \cdots \le \parameter_j
	\text{ and }
	\parameter_j\ge \cdots \ge \parameter_K
	\right\}.
	\label{eqn-unimodal}
\end{align}
This can be generalized to unimodality with respect to a given graph structure. Another example is the \emph{Lipschitz bandit} \citep{KSU08}: for some known constant $L>0$ and metric $\dist$ on $[K]$,
\begin{align}
	\spaceofparameter
	=
	\left\{
\parametervector\in\RR^K:
	|\parameter_i-\parameter_j|
	\le L \, \dist(i,j),
	\quad \forall i,j\in[K]
	\right\}.
	\label{eqn-Lipschitz}
\end{align}
One may also impose several structural constraints simultaneously.

We next give an example in which the objective is not the expected value of the feedback.

\begin{example}[Dynamic pricing \citep{Den15}]\label{example-pricing}
	The set $\spaceofaction\subseteq(0,+\infty)$ consists of feasible prices for a product. Each $\action \in \spaceofaction$ induces a demand distribution $\distP_{\action}$ over $[0,+\infty)$. 
	The objective value $f(\action)$ is the expected revenue $\action\, \EE_{\demandrv\sim\distP_{\action}}\demandrv$. At time $t$, the feedback $\feedback_t$ is drawn from $\distP_{\action_t}$.
\end{example}

To make decisions under uncertainty, the agent must quantify and update its beliefs over time. The Bayesian paradigm provides a coherent framework for this purpose. 
Consider a family of problem instances $\{\instance_{\parameter}\}_{\parameter\in\spaceofparameter}$ indexed by a parameter $\parameter\in\spaceofparameter$. Any dataset $\dataset$ induces a likelihood function $\likelihood(\cdot;\dataset)$ over $\spaceofparameter$. In the Bayesian paradigm, $\parameter$ is drawn from a prior distribution $\distQ_0$ over $\spaceofparameter$ \citep{SSW15,Fra18}.
After observing data $\dataset_t$, the agent proceeds in two steps:
\begin{enumerate}
	\item \textbf{Belief update.} Derive the posterior distribution $\distQ_t$ from Bayes' theorem:
	\begin{align}
		\frac{\rd \distQ_t}{\rd \distQ_0}(\parameter)
		=
		\frac{
			\likelihood(\parameter;\dataset_t)
		}{
			\int_{\spaceofparameter}
			\likelihood(\parameter';\dataset_t)\,\distQ_0(\rd \parameter')
		},
		\qquad
		\parameter\in\spaceofparameter.
		\label{eqn-Bayes}
	\end{align}
	\item \textbf{Decision-making.} Choose $\action_{t+1}$ by optimizing a criterion based on $\distQ_t$ \citep{Moc74,JSW98,FPD09,Tho33,KCG12,RVa18}.
\end{enumerate}

The Bayesian framework requires a probabilistic model for the \emph{entire} problem instance. This becomes restrictive when prior knowledge is expressed through hard structural constraints rather than a tractable generative model, and posterior inference can also be computationally demanding. For a concrete illustration, consider a Lipschitz bandit defined on an undirected graph with vertex set $[K]$ and edge set $E \subseteq [K]^2$. Let the rewards be Gaussian with unit variance and unknown mean vector $\parametervector\in \RR^{K}$ satisfying the edge-wise bounds $	|\parameter_{i} - \parameter_{j}| \leq L$, $\forall  (i,j) \in E$, which are equivalent to the Lipschitz constraint \eqref{eqn-Lipschitz} with $\dist$ being the graph distance. The parameter space is a $K$-dimensional convex polyhedron with $2|E|$ linear inequalities. A full Bayesian procedure requires specifying a prior and sampling from a posterior over this set. When the graph contains cycles (e.g.,~a two-dimensional grid), exact graphical model inference is generally intractable \citep{WJo08}. General constrained sampling methods are available in principle but are computationally demanding. For instance, Markov chain Monte Carlo (MCMC) algorithms on polytopes typically require projections or proximal steps at each iteration \citep{BDM17,BEL18,HKR18}, and mixing can be slow in high dimensions.

Motivated by these challenges, we develop a minimalist Bayesian approach that places the distributional belief only on the optimizer and handles structural constraints through profile likelihood.

\section{Minimalist Thompson sampling}\label{sec-method}

We first illustrate the main idea through a canonical example.

\begin{example}[Multi-armed bandit with Gaussian rewards]\label{example-MAB-Gaussian}
	Let $K\in\ZZ_+$, $\spaceofparameter\subseteq\RR^K$, and $\sigma>0$. For $\parametervector\in\spaceofparameter$, let $\instance_{\parametervector}$ denote the multi-armed bandit in Example~\ref{example-MAB} with reward distributions $\distP_j=N(\parameter_j,\sigma^2)$ for all $j\in[K]$. The likelihood for a dataset $\dataset_t=\{(\action_i,\feedback_i)\}_{i=1}^t$ is
	\begin{align}
		\likelihood(\parametervector;\dataset_t)
		=
		\prod_{i=1}^t \frac{1}{\sqrt{2\pi}\sigma}
		\exp\!\left(
		-\frac{(\parameter_{\action_i}-\feedback_i)^2}{2\sigma^2}
		\right).
		\label{eqn-likelihood-MAB}
	\end{align}
	We place a prior distribution $\distQ_0$ on $[K]$ to encode our initial belief about which arm is optimal. The event that arm $j$ is optimal corresponds to the composite hypothesis
	\begin{align}
		H_j:\ \parametervector \in \spaceofparameter_j
		=
		\left\{
		\bv \in \spaceofparameter:\ v_j \ge v_k,\ \forall k \in [K]
		\right\}.
		\label{eqn-parameterspace-j}
	\end{align}
	
	Since the likelihood function is defined on points rather than sets, we use profile likelihood \citep{BCo94} to quantify the evidence for $H_j$. Define
	\begin{align}
		\profilelikelihood(j;\dataset_t)
		=
		\sup_{\bv \in \spaceofparameter_j}
		\likelihood(\bv;\dataset_t).
		\label{eqn-profilelikelihood}
	\end{align}
	Equivalently, this is the constrained maximum likelihood over $\spaceofparameter_j$. It also arises in the generalized likelihood ratio test for the hypothesis $H_j$. We then mimic Bayes' rule and combine $\distQ_0$ with the profile likelihood to define the generalized posterior
	\begin{align}
		\distQ_t(j)
		=
		\frac{
			\profilelikelihood(j;\dataset_t)\distQ_0(j)
		}{
			\sum_{k=1}^K \profilelikelihood(k;\dataset_t)\distQ_0(k)
		},
		\qquad j \in [K].
		\label{eqn-posterior}
	\end{align}
	Following the Thompson sampling principle, $\action_{t+1}$ may be sampled from the updated belief $\distQ_t$.
\end{example}

This construction is computationally tractable in many structured problems, as illustrated below.

\begin{remark}[Computation]\label{remark-MAB}
	Define $I_j=\{i\in[t]:\action_i=j\}$, $\hat{\mean}_j=|I_j|^{-1}\sum_{i\in I_j}\feedback_i$, and
	\[
	L(\parametervector)=\frac{1}{2\sigma^2}\sum_{j=1}^K |I_j|(\parameter_j-\hat{\mean}_j)^2.
	\]
	Then $\log \profilelikelihood(j;\dataset_t)=-\inf_{\parametervector\in\spaceofparameter_j}L(\parametervector)$ up to an additive constant. The generalized posterior becomes
	\begin{align*}
		\distQ_t(j)
		=
		\frac{
			e^{-\inf_{\parametervector\in\spaceofparameter_j}L(\parametervector)}\distQ_0(j)
		}{
			\sum_{k=1}^K
			e^{-\inf_{\parametervector\in\spaceofparameter_k}L(\parametervector)}\distQ_0(k)
		}.
	\end{align*}
When each constraint set $\spaceofparameter_j$ is convex, as in the unconstrained, unimodal, and Lipschitz bandits, these infima can be computed efficiently via convex optimization. 
\end{remark}

\begin{remark}[Reward distribution]
	The procedure extends beyond Gaussian rewards to any model with a tractable likelihood function, such as the exponential family.
\end{remark}

The construction in Example~\ref{example-MAB-Gaussian} extends to general stochastic optimization problems, yielding the MINimalist Thompson Sampling (\Alg) procedure in Algorithm~\ref{alg-MINTS}. While standard Thompson sampling begins with a prior on the full parameter and often involves high-dimensional \emph{constrained sampling}, \Alg~requires only a prior on the optimizer and profiles out the nuisance parameters through \emph{constrained optimization}. When the profile-likelihood problems are tractable, \Alg\ offers a lightweight way to incorporate structural constraints.

\begin{remark}[Reducing computational cost]\label{remark-computation}
For structured multi-armed bandits, the posterior of \Alg\ is a categorical distribution over the arms. Once it is obtained, sampling becomes a trivial operation. Hence, the only computational burden is the posterior update, which requires solving one convex optimization problem per arm. Two strategies help reduce this cost.
	\begin{itemize}
		\item \textbf{Lazy updates.} Instead of recomputing the profile likelihood after every observation, one may update only at selected times and reuse the cached posterior between updates. The schedule may have a fixed frequency or be increasingly sparse over time, amortizing the optimization cost. 
		\item \textbf{Warm-starting.} The profile likelihood problems share the same objective and the same feasible set $\spaceofparameter$; they differ only in which arm is constrained to be optimal. The solution for an arm at round $t$ is typically close to that at round $t-1$ and to that for neighboring arms. Both observations support warm-starting, which reduces the number of solver iterations substantially in practice.
	\end{itemize}
This stands in stark contrast to full Thompson sampling, whose posterior is a high-dimensional distribution over $\spaceofparameter$, often described by an unnormalized density function. A new sample of the full parameter vector is needed in each round, which requires re-running a constrained sampling algorithm. The lazy update and warm-starting strategies for \Alg\ do not directly apply.
\end{remark}

\begin{remark}[Scope and limitations]\label{remark-limitations}
\Alg\ is most suitable when the parameter space $\spaceofparameter$ is complex but the decision space $\spaceofaction$ is simple. The benefit of arm-based prior disappears when $\spaceofaction$ itself contributes to most of the problem complexity, as in combinatorial bandits \citep{CLu12}. In such settings, alternative approaches that directly exploit the structure of $\spaceofaction$ are more appropriate.
\end{remark}

\begin{algorithm}[h]
	{\bf Input:} Problem class $\{\instance_{\parameter}\}_{\parameter\in\spaceofparameter}$, likelihood function $\likelihood$, prior distribution $\distQ_0$ over decision set $\spaceofaction$.\\
	Initialize $\dataset_0=\varnothing$. Denote by $f_{\parameter}$ the objective function of instance $\instance_{\parameter}$.\\
	{\bf For $t=1,2,\dots$:}\\
	\hspace*{.6cm} Sample $\action_t$ from $\distQ_{t-1}$, receive feedback $\feedback_t$, and update the dataset to $\dataset_t=\dataset_{t-1}\cup\{(\action_t,\feedback_t)\}$.\\
	\hspace*{.6cm} Construct the profile likelihood
	\begin{align*}
		\profilelikelihood(\action;\dataset_t)
		=
		\sup
		\Big\{
		\likelihood(\parameter;\dataset_t):
		\parameter\in\spaceofparameter
		\text{ and }
		f_{\parameter}(\action)=\max_{\action'\in\spaceofaction}f_{\parameter}(\action')
		\Big\},
		\qquad \action\in\spaceofaction.
	\end{align*}
	\hspace*{.6cm} Update the generalized posterior $\distQ_t$ by
	\begin{align*}
		\frac{\rd \distQ_t}{\rd \distQ_0}(\action)
		=
		\frac{
			\profilelikelihood(\action;\dataset_t)
		}{
			\int_{\spaceofaction}
			\profilelikelihood(\action';\dataset_t)\,\distQ_0(\rd \action')
		},
		\qquad \action\in\spaceofaction.
	\end{align*}
	\caption{MINimalist Thompson Sampling (\Alg)}
	\label{alg-MINTS}
\end{algorithm}

\section{Theoretical analysis for multi-armed bandits with mean constraints}\label{sec-MAB}

We now provide theoretical guarantees for \Alg\ (\Cref{alg-MINTS}) in multi-armed bandits with mean constraints (\Cref{example-MAB}). For each arm \(j \in [K]\), let \(\mean_j\) denote its expected reward and $\Delta_j = \max_{k \in [K]} \mean_k - \mean_j$ its suboptimality gap. We measure performance using the cumulative regret
\[
\regret(T) 
= \sum_{t=1}^T \Delta_{\action_t}
= T \max_{j \in [K]} \mean_j - \sum_{t=1}^T \mean_{\action_t},
\]
and its expectation \(\EE[\regret(T)]\). The term \emph{pseudo-regret} has also been used to refer to these two quantities \citep{LSz20,ACF02,BCe12}.

This section has two goals.
{\color{black}First, we provide a non-asymptotic regret bound for \Alg\ in unstructured bandits.}
Second, we present a sharp asymptotic analysis showing that \Alg\ achieves logarithmic regret almost surely under minimal moment assumptions, and that its leading constant automatically reflects structural constraints imposed on the mean vector. Throughout the section, we study \Alg\ with the Gaussian likelihood \eqref{eqn-likelihood-MAB}. The parametric model is used only to define the algorithm; the results below do not require the reward distributions themselves to be Gaussian.

\subsection{A non-asymptotic guarantee for unstructured bandits}

{\color{black}

We conduct a non-asymptotic analysis for unstructured bandits whose reward distributions satisfy the following light tail condition.

\begin{assumption}[Sub-Gaussian reward]\label{assumption-subg}
	The reward distributions $\{ \distP_j \}_{j=1}^K$ are 1-sub-Gaussian:
	\begin{align*}
		\EE_{\reward \sim \distP_j} e^{ \lambda (\reward - \mean_j) } \leq e^{ \lambda^2 / 2 } , \qquad \forall 
		\lambda \in \RR.
	\end{align*}
\end{assumption}

Assumption \ref{assumption-subg} is standard for bandit studies \citep{LSz20}. It holds for many common distributions with sufficiently fast tail decay, including any Gaussian distribution with variance bounded by 1, or distributions supported on an interval of width 2 \citep{Hoe94}. For sub-Gaussian distributions with general variance proxies, we can reduce to this case by rescaling.

We now present a regret bound with explicit dependence on the horizon $T$, number of arms $K$, and the sub-optimality gaps of arms. The proof is deferred to \Cref{sec-cor-regret-proof}.

\begin{theorem}[Regret bound]\label{cor-regret}
	For the multi-armed bandit in \Cref{example-MAB}, run \Alg~with a uniform prior over the arms and the Gaussian likelihood \eqref{eqn-likelihood-MAB} with $\sigma > 1$. Under Assumption \ref{assumption-subg}, there exists a constant $C$ determined by $\sigma$ such that
	\begin{align*}
		& \EE[ \regret(T) ]
		\leq C 
		\bigg(
		\min \bigg\{
		\sum_{j:~ \Delta_j > 0} 
		\frac{\log T }{\Delta_j }  ,
		\sqrt{ K T \log K } 
		\bigg\}
		+ \sum_{j=1}^{K} \Delta_j
		\bigg).
	\end{align*}
\end{theorem}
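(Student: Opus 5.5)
We first make the generalized posterior explicit in the unstructured case $\spaceofparameter=\RR^K$. With the notation of \Cref{remark-MAB}, write $n_k=|I_k|$ and $\hat\mean_k$ for the empirical mean. Minimizing $L$ over $\{\bv: v_j\ge v_k\ \forall k\}$ decouples: only the arms $k$ with $\hat\mean_k>\hat\mean_j$ are affected. A short argument gives
\[
\inf_{\spaceofparameter_j}L=\min_{c\ge\hat\mean_j}\Big\{\tfrac{n_j}{2\sigma^2}(c-\hat\mean_j)^2+\sum_k \tfrac{n_k}{2\sigma^2}(\hat\mean_k-c)_+^2\Big\}.
\]
This yields two bounds: $\inf_{\spaceofparameter_j}L\ge \tfrac{\min(n_j,n_{k})}{4\sigma^2}(\hat\mean_{k}-\hat\mean_j)_+^2$ for every $k$, and $\inf_{\spaceofparameter_{j^\star}}L\le \sum_k \tfrac{n_k}{2\sigma^2}(\hat\mean_k-\mean_{j^\star})_+^2+\tfrac{n_{j^\star}}{2\sigma^2}(\mean_{j^\star}-\hat\mean_{j^\star})_+^2$, obtained by taking $c=\mean_{j^\star}$ (or $c=\max(\hat\mean_{j^\star},\mean_{j^\star})$). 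Hence, with a uniform prior,
\[
\frac{\distQ_t(j)}{\distQ_t(j^\star)}\le \exp\big(\inf_{\spaceofparameter_{j^\star}}L-\inf_{\spaceofparameter_j}L\big).
\]
This has the same structure as the Gaussian-Thompson / exponential-weights posterior ratio, with variance inflated by $\sigma^2$. The plan is then to mimic the standard Thompson sampling analysis of \citet{LSz20} (Agrawal--Goyal style), decomposing the expected number of pulls of each suboptimal arm $j$.

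For a fixed suboptimal $j$ we would bound $\EE[n_j(T)]$ by splitting on two good events. The first is that $\hat\mean_j\le \mean_j+\Delta_j/3$. The second is that the optimal arm's ``deficit'' $\inf_{\spaceofparameter_{j^\star}}L$ is small; it is governed by the overestimation of other arms above $\mean_{j^\star}$ and the underestimation of $\hat\mean_{j^\star}$. On the first event, once $n_j\gtrsim \sigma^2\log T/\Delta_j^2$, we argue that $\distQ_t(j)\le T^{-1}\distQ_t(j^\star)\cdot e^{\text{deficit}}$.
\begin{itemize}
\item If $n_{j^\star}$ is also large and $\hat\mean_{j^\star}\ge\mean_{j^\star}-\Delta_j/3$, the separation bound kills arm $j$.
\item If $n_{j^\star}$ is small, we use the standard ``$1/p_t-1$'' trick: the expected number of pulls of $j$ while $j^\star$ is underestimated is bounded by $\sum_s \EE[1/\distQ(j^\star)\mid n_{j^\star}=s]-1$.
\end{itemize}
The key feature is that $\distQ_t(j^\star)\ge \distQ_0(j^\star)e^{-\inf_{\spaceofparameter_{j^\star}}L}/\sum_k\ldots$. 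Here $\sigma>1$ is used: $\EE\exp\big(\tfrac{s}{2\sigma^2}(\mean_{j^\star}-\hat\mean_{j^\star})_+^2\big)$ is finite and bounded uniformly in $s$ under 1-sub-Gaussianity precisely when $\sigma^2>1$, and it decays geometrically once $s\Delta^2$ is large. Overestimation of the other arms is handled the same way, since each contributes a factor with bounded exponential moment. Summing over $s$ gives a $\sigma$-dependent constant plus $O(\log T/\Delta_j^2)$. This gives $\EE n_j(T)\le C(\log T/\Delta_j^2+1)$, hence the first branch of the minimum plus $\sum_j\Delta_j$.

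The worst-case branch $\sqrt{KT\log K}$ needs a different argument, because the above would only give $\sqrt{KT\log T}$. Here we would use the exponential-weights view. Note that $\distQ_t$ is a Gibbs distribution with potentials $\inf_{\spaceofparameter_j}L\approx \tfrac{1}{2\sigma^2}\sum_k n_k(\hat\mean_k-\hat\mean_j)_+^2$ restricted appropriately. We would then follow the minimax Thompson sampling / MOSS-type analysis: arms with $\Delta_j\le\sqrt{K\log K/T}$ contribute at most $\sqrt{KT\log K}$ trivially. For the remaining arms we refine the threshold of the first argument from $\log T$ to $\log(T\Delta_j^2/K)$, using a peeling argument on the optimal arm's underestimation with a maximal inequality. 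Summing $\sum_j \log(T\Delta_j^2/K)/\Delta_j$ over $\Delta_j\ge\sqrt{K\log K/T}$ then gives $O(\sqrt{KT\log K})$.

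The main obstacle is this refinement: controlling $\EE[1/\distQ_t(j^\star)]$ sharply. The normalizer involves all $K$ arms, so we must show that the overestimation terms of many arms do not accumulate beyond a $\log K$ factor. We would try first to bound $\sum_k\exp(-\inf_{\spaceofparameter_k}L)$ against $\max_k$ at a cost of a factor $K$, which is absorbed as $\log K$ in the exponent scale.
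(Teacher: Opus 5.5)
There is a genuine gap, and it already affects the problem-dependent branch, not only the worst-case one. The failing step is the one that charges pulls of $j$ to pulls of an under-sampled optimal arm.

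\textbf{Why the $1/p_t-1$ step fails as written.} You bound $\inf_{\spaceofparameter_{j^\star}}L$ and $\inf_{\spaceofparameter_j}L$ separately, so your bound on $\distQ_t(j)/\distQ_t(j^\star)$ carries the deficit $\sum_{k}\frac{n_k}{2\sigma^2}(\hat\mean_k-\mean_{j^\star})_+^2$ of all other arms. The $1/p_t-1$ argument needs, on the relevant event, a bound that depends only on the first $s=n_{j^\star}$ samples of the optimal arm. Only then does summing over the rounds where $j^\star$ is pulled evaluate it once per $s$, at a fixed sample size. Your deficit fails this in three ways:
\begin{itemize}
\item it changes between pulls of $j^\star$;
\item it is evaluated at adaptively chosen counts $n_k$, where no uniform exponential moment exists ($\sup_s s(\xi_{k,s}-\mean_k)^2=\infty$ a.s.\ for nondegenerate rewards);
\item it is a sum over $K-1$ arms, so even bounded per-arm factors would multiply to something exponential in $K$.
\end{itemize}
So ``each contributes a factor with bounded exponential moment'' does not go through. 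Likewise, $\EE[1/\distQ(j^\star)\mid n_{j^\star}=s]$ is not a fixed-$s$ quantity, since $\distQ_t(j^\star)$ depends on all arms' data.

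\textbf{The missing idea.} The \emph{difference} of the two profile losses involves only arms $j$ and $j^\star$. In your own formula, let $c^*\ge\hat\mean_j$ be the optimal level for $\spaceofparameter_j$. On $\{\hat\mean_{j^\star}\le\hat\mean_j\}$ the same level is admissible for $\spaceofparameter_{j^\star}$, and all terms of arms $k\notin\{j,j^\star\}$ cancel. For $n_{j^\star}<n_j$ this gives
\[
\inf_{\spaceofparameter_{j^\star}}L-\inf_{\spaceofparameter_j}L\le\frac{1}{2\sigma^2}\sup_{z\ge\hat\mean_j}\bigl[n_{j^\star}(z-\hat\mean_{j^\star})^2-n_j(z-\hat\mean_j)^2\bigr]=\frac{1}{2\sigma^2}\cdot\frac{(\hat\mean_j-\hat\mean_{j^\star})^2}{1/n_{j^\star}-1/n_j}.
\]
This is Part 3 of \Cref{lem-ratio-nonasymptotic}. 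Part 2 gives $\distQ_t(j)\le\distQ_t(j^\star)$ when $\hat\mean_{j^\star}\ge\hat\mean_j$ and $n_j\ge n_{j^\star}$. On $\{\hat\mean_j<\mean_j+\Delta_j/3,\ n_{j^\star}\le(1-c)n_j\}$, the right-hand side is at most $n_{j^\star}(\hat\mean_{j^\star}-\mean_{j^\star})^2/(2c\sigma^2)$. This depends on the optimal arm's samples alone, and its exponential moment is at most $(1-1/(c\sigma^2))^{-1/2}$ at each fixed $s$. Taking $c=(1+\sigma^{-2})/2$ is where $\sigma>1$ enters. In your ``separation'' case, use $\distQ_t(j)\le e^{-\inf_{\spaceofparameter_j}L}$ directly, since the empirical leader has zero loss, rather than routing through the deficit.

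\textbf{Worst-case branch.} The MOSS-type refinement to $\log(T\Delta_j^2/K)$ is unnecessary, and it fails in general. When many arms share a gap $\Delta$, the normalizer keeps order-one mass on suboptimal arms until each has order $\sigma^2\log K/\Delta^2$ pulls. Absorbing the factor $K$ as you suggest simply returns $\log(T\Delta_j^2)$, and that already suffices:
\begin{itemize}
\item Choose the threshold $M\asymp\log(\max\{T\Delta_j^2,e\})/\Delta_j^2$, so that the $T e^{-cM\Delta_j^2}$ terms are $O(1/\Delta_j^2)$. This gives $\EE n_j(T)\lesssim\log(\max\{T\Delta_j^2,e\})/\Delta_j^2+1$.
\item Set $\delta=e\sqrt{K\log K/T}$. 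Arms with $\Delta_j\le\delta$ cost at most $T\delta$.
\item For $\Delta_j>\delta$, monotonicity of $z\mapsto z^{-1}\log z$ on $[e,\infty)$ gives $\log(T\Delta_j^2)/\Delta_j\le2\log(\sqrt T\delta)/\delta\lesssim\log K/\delta$. Summing over at most $K$ arms gives $O(\sqrt{KT\log K})$.
\end{itemize}
Taking $\delta=0$ instead gives the $\sum_j\log T/\Delta_j$ branch.
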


This result shows the near-optimality of \Alg.\ The sum $ \sum_{j=1}^{K} \Delta_j$ stems from the fact that each arm must be pulled at least once. When the problem instance is fixed and $T$ is sufficiently large, the problem-dependent bound $	\sum_{j:~ \Delta_j > 0} \Delta_j^{-1} \log T  $ matches the lower bound for Gaussian bandits \citep{GMS19} up to a constant factor. 
For any fixed $T$, the problem-independent bound $\sqrt{TK\log K}$ matches that for Thompson sampling using Gaussian likelihood \citep{AGo17}, achieving the minimax lower bound up to a $\sqrt{\log K}$ factor \citep{BCe12}.

\Cref{cor-regret} is a corollary of the more refined result below. See \Cref{sec-thm-regret-proof} for the proof.

\begin{theorem}[Regret bound]\label{thm-regret}
	Under the setup in \Cref{cor-regret}, there exists a constant $C$ determined by $\sigma$ such that 
	\begin{align*}
		\EE [\regret(T)] \leq C 
		\inf_{ \delta \geq 0 }
		\bigg\{
		\sum_{j:~ \Delta_j > \delta}
		\bigg(
		\frac{ \log ( \max\{ T \Delta_j^2 , e \} )  }{\Delta_j } 
		+  \Delta_j  
		\bigg)
		+ T \max_{j:~\Delta_j \leq \delta} \Delta_j
		\bigg\} .
	\end{align*}
\end{theorem}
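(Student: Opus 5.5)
Since the bandit is unstructured ($\spaceofparameter=\RR^K$), I would begin by making the generalized posterior explicit. For arm $j$ with $n_k=|I_k|$ pulls and empirical means $\hat\mean_k$, the profile problem $\inf_{\bv\in\spaceofparameter_j}L(\bv)$ separates: any arm $k$ with $\hat\mean_k\le\hat\mean_j$ is left alone, and the others are pulled down to a common level $c$ with $c\ge \hat\mean_j$. Write $\hat\mean^*=\max_k\hat\mean_k$ and $\hat j$ for the empirical leader. Taking $c=\hat\mean_j$ yields $\log\profilelikelihood(j)-\log\profilelikelihood(\hat j)\ge -\sum_{k}n_k(\hat\mean_k-\hat\mean_j)_+^2/(2\sigma^2)$. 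Taking only the term for $k=\hat j$ yields $\log\profilelikelihood(j)-\log\profilelikelihood(\hat j)\le -\frac{n_jn_{\hat j}}{n_j+n_{\hat j}}(\hat\mean^*-\hat\mean_j)^2/(2\sigma^2)\le -\min\{n_j,n_{\hat j}\}(\hat\mean^*-\hat\mean_j)^2/(4\sigma^2)$. With a uniform prior, this gives two-sided control of $\distQ_t(j)/\distQ_t(\hat j)$ that looks like Gaussian Thompson sampling with variance $\sigma^2/n$, with constants changed by $\sigma$. Assumption~\ref{assumption-subg} together with $\sigma>1$ provides the slack needed for 1-sub-Gaussian deviations to be dominated by the "posterior width" $\sigma^2/n$.

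Next I would follow the Agrawal--Goyal scheme for the decomposition. Fix $\delta$. Arms with $\Delta_j\le\delta$ contribute at most $T\max_{j:\Delta_j\le\delta}\Delta_j$, so it remains to show that for each arm with $\Delta_j>\delta$ (hence $\Delta_j>0$), $\EE[n_j(T)]\lesssim \log(\max\{T\Delta_j^2,e\})/\Delta_j^2+1$. Let $x_j=\mean_j+\Delta_j/3$, and split pulls of $j$ into two kinds. In kind (a), $\hat\mean_j>x_j$; sub-Gaussian concentration and a union over sample counts give $O(1/\Delta_j^2)$ expected occurrences. In kind (b), $\hat\mean_j\le x_j$ and $n_j\ge L_j:=C\log(T\Delta_j^2)/\Delta_j^2$. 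I would handle (b) by comparing with the optimal arm $1$. If $\hat\mean_1\ge \mean_1-\Delta_j/3$, then $\hat\mean^*-\hat\mean_j\ge\Delta_j/3$. If moreover $n_1\ge n_j$, the upper bound above gives $\distQ_t(j)\le \exp(-c\,n_j\Delta_j^2)\le 1/(T\Delta_j^2)$, so summing over $t$ costs $O(1/\Delta_j^2)$. When $n_1<n_j$ the factor is $\min\{n_j,n_1\}$, and this case has to be absorbed into the next step.

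The main obstacle is the remaining event, where the optimal arm is underestimated or undersampled. For this I would use the standard device $\PP(a_t=j,\ \text{good}_j)\le \EE[\frac{1-p_{1,t}}{p_{1,t}}\PP(a_t=1\mid\cdot)]$, where $p_{1,t}=\distQ_{t-1}(\{k:\text{score}\ge y_j\})$. I would try to define the "score" of arm $1$ through the lower bound on $\distQ_t(1)/\distQ_t(\hat j)$. Concretely, if $\hat\mean_1$ lies below $\hat\mean_k$ by $\epsilon$ for some arms $k$, then $\distQ_t(1)\gtrsim \exp(-\sum_k n_k(\hat\mean_k-\hat\mean_1)_+^2/2\sigma^2)/K$. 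The sum runs over arms that have been sampled a lot and so are accurately estimated, leaving the main weight on $n_1\epsilon^2/2\sigma^2$. The target is to bound $\EE[1/p_{1,t}-1]$ over the $s$-th pull of arm $1$ by $O(1)$ for small $s$, and by $O(1/(T\Delta_j^2))$ once $s\gtrsim\log(T\Delta_j^2)/\Delta_j^2$. This mirrors Agrawal--Goyal's Gaussian-tail computation, with $\sigma>1$ ensuring that the tail $\EE e^{s(\mean_1-\hat\mean_{1,s})_+^2/2\sigma^2}$ is finite and uniformly bounded for sub-Gaussian rewards. The delicate part is that the profile likelihood couples all arms, so here I would lean on the fact that the sum over $k$ is dominated by arm $\hat j$ and by arms already concentrated.

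Summing the three contributions gives $\EE[n_j(T)]\Delta_j\lesssim \log(\max\{T\Delta_j^2,e\})/\Delta_j+\Delta_j$, which is the stated bound. I would need the extra $+\Delta_j$ term for the forced initial pulls, since before arm $j$ is sampled its profile likelihood is maximal.
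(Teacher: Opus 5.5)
Your outer structure matches the paper's: the threshold at $\delta$, the $O(\Delta_j^{-2})$ count of pulls with $\hat\mean_j>x_j$, and at most $L_j$ pulls while $n_j<L_j$. The step you flag as the main obstacle, however, has a genuine gap.

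\textbf{The device does not transfer.} MINTS draws an arm directly from the categorical $\distQ_{t-1}$. There is no per-arm sampled score, so the Agrawal--Goyal quantity $p_{1,t}$ has no counterpart. The only available device is $\PP(\action_t=j\mid\history_{t-1})=\frac{\distQ_{t-1}(j)}{\distQ_{t-1}(1)}\,\PP(\action_t=1\mid\history_{t-1})$. Everything hinges on bounding this ratio by a function of arm $1$'s own statistics, so that it can be summed over the pulls of arm $1$.

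\textbf{Why the route through $\hat j$ fails.} It gives $\distQ_{t-1}(j)/\distQ_{t-1}(1)\le e^{\Lambda(1,\dataset_{t-1})}$, or $1/\distQ_{t-1}(1)\le K e^{\Lambda(1,\dataset_{t-1})}$, which already costs a factor $K$. Here $2\sigma^2\Lambda(1,\dataset_{t-1})\le\sum_k n_k(\hat\mean_k-\hat\mean_1)_+^2$.
\begin{itemize}
\item This bound contains no $n_1$ at all. Its weights are the $n_k$, including $n_j\ge L_j$. So on $\{\hat\mean_j>\hat\mean_1\}$ it is at least $\exp\{L_j(\hat\mean_j-\hat\mean_1)^2/(2\sigma^2)\}$, not $\exp\{n_1\epsilon^2/(2\sigma^2)\}$.
\item It involves every arm whose empirical mean exceeds $\hat\mean_1$, including barely sampled arms that are not concentrated.
\item Putting the common level at $\hat\mean^*$ instead gives $n_1(\hat\mean^*-\hat\mean_1)^2$, which still depends on the maximum over all arms.
\end{itemize}
Your claim that the sum is dominated by $\hat j$ and by already concentrated arms is unjustified on exactly this event. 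No $K$-free, $O(1)$-per-pull bound follows.

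\textbf{The missing idea.} You need a purely pairwise comparison of $\Lambda(j,\cdot)$ and $\Lambda(1,\cdot)$, obtained by an exchange argument. Take a minimizer over one of $\spaceofparameter_j,\spaceofparameter_1$ and alter only coordinates $1$ and $j$ to get a feasible point of the other. This yields \Cref{lem-ratio-nonasymptotic}:
\begin{itemize}
\item $\distQ(j)\le\distQ(1)$ whenever $\hat\mean_1\ge\hat\mean_j$ and $n_j\ge n_1$;
\item $\distQ(j)/\distQ(1)\le\exp\{(\hat\mean_j-\hat\mean_1)^2/[2\sigma^2(1/n_1-1/n_j)]\}$ whenever $\hat\mean_j\ge\hat\mean_1$ and $n_j>n_1$.
\end{itemize}
Other arms never enter.

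\textbf{The case split.} Splitting at $n_1\ge n_j$ versus $n_1<n_j$ is the wrong split, because the second bound degenerates as $n_1\uparrow n_j$. The paper instead compares $n_1$ with $(1-c)M$ while $n_j>M$, where $c\in(\sigma^{-2},1)$.
\begin{itemize}
\item For large $n_1$, it uses $\distQ(j)\le e^{-\Lambda(j,\dataset_{t-1})}$ with the pair $(1,j)$, plus a self-normalized bound on $\hat\mean_1$. It does not use $(\hat j,j)$, whose count may be tiny; your second paragraph makes this same slip.
\item For small $n_1$, on $\{\hat\mean_1\le\hat\mean_j<\mean_1\}$ the ratio is at most $\exp\{n_1(\mean_1-\hat\mean_1)^2/(2c\sigma^2)\}$. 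Its expectation at each pull of arm $1$ is at most $(1-1/(c\sigma^2))^{-1/2}$, and this is precisely where $\sigma>1$ enters.
\end{itemize}
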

The regret bound has the same order as that in \cite{AOr10}, achieved by a carefully designed upper confidence bound algorithm with arm elimination.

}

%
%
%

While \Cref{cor-regret} already establishes the near-optimality of \Alg\ in a non-asymptotic sense, it does not reveal how structural information encoded by the feasible parameter space \(\spaceofparameter\) affects the leading constant in regret. Our next results address this question through an asymptotic analysis.

\subsection{Asymptotic analysis of structured bandits}

We now characterize the asymptotic behavior of the cumulative regret \(\regret(T)\) for \Alg\ with the Gaussian likelihood \eqref{eqn-likelihood-MAB}, \(\sigma=1\), and a uniform prior over the arms. For this analysis, we assume finite second moments of the reward distributions and a unique optimal arm.

\begin{assumption}\label{assumption-finite-variance}
	Each reward distribution \(\distP_j\) has finite variance \(\sigma_j^2 < \infty\). There exists $\opt \in [K]$ such that $\mean_{\opt} > \max_{j \neq \opt} \mean_j$.
\end{assumption}

Our first theorem gives a logarithmic upper bound on the cumulative regret that holds almost surely for any parameter space containing the true mean vector. The proof is given in \Cref{sec-thm-asymptotic-proof}.

\begin{theorem}\label{thm-asymptotic}
	Let \(\spaceofparameter\) be an arbitrary subset of \(\RR^K\) that contains the true mean vector \(\vectorofmeans\). Run \Alg\ with a uniform prior, the Gaussian likelihood \eqref{eqn-likelihood-MAB} with \(\sigma=1\), and parameter space \(\spaceofparameter\). Under Assumption \ref{assumption-finite-variance}, we have
	\[
	\limsup_{T\to\infty}
	\frac{\regret(T)}{\log T}
	\le
	\sum_{j\neq \opt} \frac{2}{\Delta_j}
	\qquad \text{a.s.}
	\]
\end{theorem}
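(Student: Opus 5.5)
The plan is to work pathwise on the almost-sure event provided by the strong law of large numbers and the law of the iterated logarithm (finite variance suffices for both). On this event, for every arm $j$ with $n_j(t)\to\infty$ we have $\hat\mean_j(t)-\mean_j = O(\sqrt{\log\log n_j(t)/n_j(t)})$, where $n_j(t)=|I_j|$ is the number of pulls of arm $j$ up to time $t$. Write $\ell_j(t)=\inf_{\bv\in\spaceofparameter_j}L(\bv)$, so that $\distQ_t(j)\propto e^{-\ell_j(t)}$. Two facts about $\ell$ drive the argument. The first is an upper bound on the optimal arm: since $\vectorofmeans\in\spaceofparameter_{\opt}$,
\[
\ell_{\opt}(t)\le L(\vectorofmeans)=\tfrac12\sum_k n_k(t)(\hat\mean_k-\mean_k)^2 = O(K\log\log t).
\]
The second is a lower bound on a suboptimal arm: any $\bv\in\spaceofparameter_j$ has $v_j\ge v_{\opt}$, so dropping all other terms and minimizing over $v_j\ge v_{\opt}$ gives
\[
\ell_j(t)\ge \inf_{v_j\ge v_{\opt}}\tfrac12\big[n_j(\hat\mean_j-v_j)^2+n_{\opt}(\hat\mean_{\opt}-v_{\opt})^2\big]=\frac{n_jn_{\opt}}{2(n_j+n_{\opt})}(\hat\mean_{\opt}-\hat\mean_j)_+^2 .
\]
This bound uses only $\vectorofmeans\in\spaceofparameter$ and no other structure, which is why the constant in \Cref{thm-asymptotic} is the unstructured one, $2/\Delta_j$.

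The first step is to show that every arm is pulled infinitely often and that $n_{\opt}(t)/t\to1$ a.s. For infinite sampling, I would use $\ell_j(t)\le L(\bv)$ for any $\bv\in\spaceofparameter_j$. For the optimal arm, once the suboptimal arms have been sampled somewhat, $\ell_{\opt}\lesssim \log\log t$, so $\distQ_t(\opt)\ge e^{-O(\log\log t)}/K$, whose sum over $t$ diverges. A conditional Borel--Cantelli argument (Lévy's extension) then shows arm $\opt$ is pulled infinitely often. For a suboptimal arm $j$, suppose it were pulled only finitely often. Then $\hat\mean_j$ freezes at some value, and the cost of making $j$ optimal stays bounded: pick a point of $\spaceofparameter_j$ and note that only the finitely-sampled coordinate is frozen. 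This step is delicate for arbitrary $\spaceofparameter$, and I expect it to be the main obstacle. If direct recurrence fails, I would instead argue that $\distQ_t(j)$ gives a nonsummable sampling probability, using that $\ell_j-\ell_{\opt}$ grows at most linearly in the pulls of arms other than $j$. Alternatively, the regret bound can be organized so that arms pulled finitely often contribute $O(1)$ and require no further treatment. Given this, the SLLN yields $n_j=o(t)$ for $j\ne\opt$, since $\distQ_t(j)\le e^{\ell_{\opt}-\ell_j}$ and $\ell_j\gtrsim n_j\Delta_j^2$ while $n_j\le n_{\opt}$ eventually.

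The second step gives the sharp count. Fix $\varepsilon>0$. Eventually $n_{\opt}\gg n_j$ and $|\hat\mean_k-\mean_k|\le\varepsilon$, so the bound above gives $\ell_j(t)\ge \tfrac12 n_j(t)(1-\varepsilon)(\Delta_j-2\varepsilon)^2$. Hence
\[
\distQ_t(j)\le \exp\!\big(O(\log\log t)-\tfrac12(1-\varepsilon)(\Delta_j-2\varepsilon)^2 n_j(t)\big).
\]
Set $m_T=(1+\varepsilon)\,2\log T/(\Delta_j-2\varepsilon)^2(1-\varepsilon)$. Once $n_j\ge m_T$, the per-round pull probability is at most $T^{-1-\varepsilon'}$ up to polylog factors, so the expected number of further pulls up to time $T$ is $o(1)$. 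To make this almost sure, I would apply a martingale argument to the sum of indicators minus their conditional probabilities (Freedman's inequality along a geometric grid $T=2^m$, then Borel--Cantelli). This gives $n_j(T)\le m_T+o(\log T)$ eventually a.s.

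Finally, $\regret(T)=\sum_{j\ne\opt}\Delta_jn_j(T)$. Dividing by $\log T$, taking $\limsup$, and letting $\varepsilon\downarrow0$ gives $\sum_{j\ne\opt}2/\Delta_j$.
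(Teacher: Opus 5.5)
Your skeleton matches the paper's. The LIL gives $\ell_{\opt}(t)\le L(\vectorofmeans)=O(\log\log t)$, hence $\distQ_t(\opt)\ge(\log t)^{-A}/K$. The two-arm relaxation bounds $\ell_j$ below by $\frac{n_jn_{\opt}}{2(n_j+n_{\opt})}$ times a squared gap. A conditional Borel--Cantelli argument then turns a small per-round sampling probability into a pathwise bound on $n_j$.

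\textbf{The gap is in Step 1}, on which Step 2's ``eventually $n_{\opt}\gg n_j$'' rests. Knowing only that arm $\opt$ is pulled infinitely often is not enough. The relaxation gives only $\ell_j\gtrsim\min\{n_j,n_{\opt}\}\Delta_j^2$, and this beats the $O(\log\log t)$ error and the $\log t$ threshold only if $n_{\opt}(t)/\log t\to\infty$. Your route to $n_{\opt}/t\to1$ relies on ``$n_j\le n_{\opt}$ eventually''. That is asserted without proof and is essentially what you are trying to show.

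The missing ingredient is a quantitative growth bound on $n_{\opt}$, extracted from $\distQ_{t-1}(\opt)\ge(\log t)^{-A}/K$. The strong form of L\'evy's conditional Borel--Cantelli lemma gives $n_{\opt}(t)/\sum_{s\le t}\distQ_{s-1}(\opt)\to1$ on the event where the denominator diverges. The paper instead applies Azuma--Hoeffding to $n_{\opt}(t+1)-\sum_{s\le t}\distQ_{s-1}(\opt)$. Either way, $n_{\opt}(t)\ge t/(3K(\log t)^A)$ eventually. With this you never need $n_{\opt}/t\to1$: by $\frac{ab}{a+b}\ge\min\{\eta a,(1-\eta)b\}$, the branch $\eta n_{\opt}$ eventually exceeds any fixed multiple of $\log t$, so only the $(1-\eta)n_j$ branch matters.

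\textbf{Your first idea for arms pulled finitely often is false for general $\spaceofparameter$.} You argued that a suboptimal arm is pulled infinitely often because the cost of making it optimal stays bounded once its mean freezes. If $\spaceofparameter_j=\varnothing$, then $\distQ_t(j)=0$ for all $t\ge1$. More generally, every point of $\spaceofparameter_j$ may differ from $\vectorofmeans$ by a fixed amount in coordinate $\opt$, so $\ell_j$ grows linearly in $n_{\opt}$ while $n_j$ stays bounded. Your fallback is the right one: arms with bounded $n_j$ are trivial, and on $\{n_j\to\infty\}$ the SLLN gives $\hat\mean_j\to\mean_j$.

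The paper avoids this case split through \Cref{lem-sandwich}. It compares $\Lambda(j,\dataset_{t-1})-\Lambda(\opt,\dataset_{t-1})$ with the true-mean profile $\bar\Lambda_t(j)$, at multiplicative cost $1\pm\varepsilon$ and additive cost $2\varepsilon^{-1}r_t=O(\log\log t)$. This holds whether or not each arm is pulled infinitely often. \Cref{lem-ratio} then yields $\Delta_j$ exactly, and the same sandwich is reused for the matching lower bound and the unimodal analysis in \Cref{thm-sharp}.

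\textbf{Your final counting step works but is heavier than needed.} The horizon-dependent threshold $m_T$ with Freedman's inequality on a dyadic grid is correct. With the fixed threshold $b(t)=m_t$, you get $\distQ_{t-1}(j)\le t^{-(1+\varepsilon/2)}$ eventually whenever $n_j(t)\ge b(t)$. One application of conditional Borel--Cantelli (\Cref{lem-pullcount}) then gives $\limsup_t n_j(t)/m_t\le1$.
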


\Cref{thm-asymptotic} shows that, regardless of the structure imposed on the feasible parameter space, the asymptotic regret of \Alg\ is never worse than the unstructured Lai--Robbins constant \citep{LRo85}. In particular, if one ignores structural information and treats the problem as an unconstrained bandit, then the classical logarithmic rate \(\sum_{j\neq \opt} 2/\Delta_j\) remains a universal upper bound. This constant is sharp for Gaussian bandits with unknown means and unit variance.

The general upper bound in \Cref{thm-asymptotic} leaves open whether \Alg\ can exploit structure to obtain a strictly smaller asymptotic constant. Our next theorem gives an affirmative answer. In the unstructured setting, \Alg\ attains the classical Lai--Robbins constant exactly. In the unimodal setting, however, the algorithm pays only for the immediate neighbors of the optimal arm, yielding a smaller constant that matches the sharp asymptotic benchmark for unimodal Gaussian bandits with unit variance \citep{CPr14}. The proof is deferred to \Cref{sec-thm-sharp-proof}.

\begin{theorem}\label{thm-sharp}
	Consider the setup of \Cref{thm-asymptotic}. If \(\spaceofparameter = \RR^K\), then
\begin{align}
	\lim_{T\to\infty}
\frac{\regret(T)}{\log T}
=
\sum_{j\neq \opt} \frac{2}{\Delta_j}
\qquad \text{a.s.}
\label{eqn-thm-sharp-1}
\end{align}
	For the unimodal bandit with \(\spaceofparameter\) given by \eqref{eqn-unimodal},
	\[
	\lim_{T\to\infty}
	\frac{\regret(T)}{\log T}
	=
	\sum_{j:\,|j-\opt|=1} \frac{2}{\Delta_j}
	\qquad \text{a.s.}
	\]
\end{theorem}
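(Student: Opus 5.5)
The plan is to sandwich $\regret(T)/\log T$. \Cref{thm-asymptotic} already supplies the upper bound in the unstructured case, so there we only need a matching almost-sure lower bound. In the unimodal case we need a sharper upper bound, in which non-neighbors of $\opt$ contribute $o(\log T)$, together with a lower bound for the two neighbors. Throughout, write $n_j(t)=|I_j|$ for the pull counts and use the closed form of \Cref{remark-MAB} with $\sigma=1$: $\distQ_t(j)\propto e^{-\ell_j(t)}$, where $\ell_j(t)=\inf_{\parametervector\in\spaceofparameter_j}L(\parametervector)$.

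The first step collects almost-sure facts. By \Cref{thm-asymptotic}, $\sum_{j\ne\opt}n_j(t)=O(\log t)$, hence $n_{\opt}(t)=t-O(\log t)$. Every arm is pulled infinitely often: $\distQ_t(j)\ge e^{-\ell_j(t)}/K$, and $\ell_j$ stays bounded while $n_j$ is frozen and $n_{\opt}$ grows, so a conditional Borel--Cantelli argument forces $n_j\to\infty$. By the strong law (finite variance suffices), $\hat\mean_j\to\mean_j$ for all $j$. I will also show that $\ell_{\opt}(t)\to0$, since the unconstrained minimizer $\hat{\boldsymbol\mean}$ eventually lies in $\spaceofparameter_{\opt}$; for unimodal this needs the true mean vector to be strictly unimodal around $\opt$, or else a small perturbation argument.

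The second step is the lower bound, the same in both settings, for each $j\ne\opt$ in the unstructured case and each neighbor $j=\opt\pm1$ in the unimodal case. Raising only $v_j$ to the common value $\bar v=(n_j\hat\mean_j+n_{\opt}\hat\mean_{\opt})/(n_j+n_{\opt})$ and lowering $v_{\opt}$ to $\bar v$ gives a feasible point of $\spaceofparameter_j$. For a neighbor in the unimodal case this point stays unimodal, because all other arms lie below $\bar v$ for large $t$. Since $n_{\opt}/n_j\to\infty$, we get $\ell_j(t)\le (1+o(1))\,n_j(t)\Delta_j^2/2$. Fix $\varepsilon>0$. On the event that $n_j(t)\le 2(1-\varepsilon)\log t/\Delta_j^2$ for all large $t$ in some window, we would have $\distQ_t(j)\ge t^{-(1-\varepsilon/2)}$, and the conditional probabilities sum to infinity over that window. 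Lévy's extension of Borel--Cantelli (indicator sums $\asymp$ conditional probability sums when the latter diverge) then forces $n_j$ past the threshold. This gives $\liminf_t n_j(t)\Delta_j^2/(2\log t)\ge1$ a.s., and summing $\Delta_j n_j$ yields the matching $\liminf$.

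The third step, and the main obstacle, is the unimodal upper bound for non-neighbors. Take $j$ on, say, the right with $j>\opt+1$, and put $j'=\opt+1$. Any $\bv\in\spaceofparameter_j$ satisfies $v_{\opt}\le v_{j'}\le v_j$. Hence either $v_{\opt}$ drops by a constant, at cost of order $n_{\opt}\gg\log t$, or both $v_{j'}$ and $v_j$ rise near $\hat\mean_{\opt}$. This gives $\ell_j(t)\ge \ell_{j'}^{\circ}(t)+c\,n_j(t)$, where $\ell_{j'}^{\circ}\approx n_{j'}\Delta_{j'}^2/2\approx\ell_{j'}$ and $c>0$ depends on the gaps. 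So $\distQ_t(j)\le C\,\distQ_t(j')e^{-c n_j(t)}$. The expected number of pulls of $j'$ is $O(\log t)$, so $\sum_s\distQ_s(j')=O(\log t)$ a.s. by the same martingale comparison. The compensator of pulls of $j$ beyond level $m$ is then at most $O(\log t)e^{-cm}$, which gives $n_j(t)=O(\log\log t)=o(\log t)$ a.s.

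For the neighbors themselves, the upper bound comes from the reverse inequality $\ell_{j'}(t)\ge(1-o(1))n_{j'}\Delta_{j'}^2/2$, valid because moving $v_{\opt}$ is prohibitively expensive. Once $n_{j'}\ge 2(1+\varepsilon)\log t/\Delta_{j'}^2$ we have $\distQ_t(j')\le t^{-(1+\varepsilon/2)}$, which is summable, so $\limsup_t n_{j'}\Delta_{j'}^2/(2\log t)\le1$; this argument mirrors \Cref{thm-asymptotic}. The delicate points are the uniformity of the $o(1)$ terms when converting the profile costs, which rely only on the strong law under finite variance, and making the martingale Borel--Cantelli comparison rigorous on random time windows.
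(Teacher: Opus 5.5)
Your treatment of $\spaceofparameter=\RR^K$ is sound. There every arm is indeed pulled infinitely often, the point with $v_j=v_{\opt}=\bar v$ is feasible, and $\distQ_t(j)\ge e^{-\ell_j(t)}/K$. The ratio form of L\'evy's Borel--Cantelli applied to cumulative sums is a legitimate substitute for \Cref{lem-dyadic-threshold-lower}.

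The unimodal half has a genuine gap. Every construction there is built from the empirical vector $\hat\mean(t)$, and it silently needs that vector to be eventually unimodal with peak at $\opt$, with all arms pulled infinitely often. Neither is available.
\begin{itemize}
\item Assumption~\ref{assumption-finite-variance} only requires a unique maximum, so suboptimal arms may tie (the experiment has $\mean_7=\mean_8$ with $\opt=6$).
\item Your infinitely-often argument fails for $|j-\opt|\ge2$. There $\ell_j(t)$ does not stay bounded while $n_j$ is frozen, because any $\bm{v}\in\spaceofparameter_j$ must also lift the neighbor between $j$ and $\opt$, at a cost that grows like $\log t$. So nothing prevents a non-neighbor from being pulled finitely often, leaving $\hat\mean_j$ frozen at an arbitrary value.
\end{itemize}
Consequently, $\ell_{\opt}(t)\to0$ need not hold. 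The point in Step 2 need not lie in $\spaceofparameter_j$, since unimodality constrains the ordering of the remaining coordinates, not just their level below $\bar v$. The upper bounds on $\ell_{j'}$ in Step 3 inherit the same problem. Assuming strict unimodality adds a hypothesis, and the unspecified ``perturbation argument'' is precisely the missing step.

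The paper closes this gap with \Cref{lem-sandwich}: $\Lambda(j,\dataset_{t-1})-\Lambda(\opt,\dataset_{t-1})$ lies within $\varepsilon\bar\Lambda_t(j)+2\varepsilon^{-1}r_t$ of $\bar\Lambda_t(j)$, the profile cost at the true means. Here $r_t=\sum_i n_i(\hat\mean_i-\mean_i)^2=O(\log\log t)$ by the Hartman--Wintner LIL. All feasibility questions are then asked of $\vectorofmeans$ itself (e.g.\ $\vectorofmeans+\Delta_j\bm{e}_j\in\spaceofparameter_j$ for a neighbor), and ties or finitely pulled arms enter only through $r_t$. The strong law alone cannot give $r_t=o(\log t)$, because of the term $n_{\opt}(\hat\mean_{\opt}-\mean_{\opt})^2$. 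To avoid the LIL you would need feasible points such as $\vectorofmeans+(\hat\mean_{\opt}-\mean_{\opt})\mathbf{1}+\Delta_j\bm{e}_j$.

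With this repair, your Step 3 becomes a valid alternative to the paper's argument, with one correction. The error in $\ell_j-\ell_{j'}$ is of order $\varepsilon\log t+\varepsilon^{-1}\log\log t$ rather than $O(1)$. So you get $\distQ_t(j)\le t^{O(\varepsilon)}\distQ_t(j')e^{-c n_j(t)}$ and only $n_j=o(\log t)$, not $O(\log\log t)$. That still suffices, because $\sum_s s^{-\alpha}\distQ_{s-1}(j')<\infty$ for $\alpha>0$ whenever $\sum_{s\le t}\distQ_{s-1}(j')=O(\log t)$.

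The paper instead combines \Cref{lem-unimodal-profile}, $\bar\Lambda_t(j)\gtrsim\tfrac12\Delta_{j'}^2(n_j+n_{j'})$, with the neighbor's \emph{lower} bound from \Cref{lem-arm2-log-rate}. Your thinning argument uses the neighbor's \emph{upper} bound instead. Finally, re-deriving the neighbors' upper bound is unnecessary, since \Cref{thm-asymptotic} already holds for any $\spaceofparameter\ni\vectorofmeans$.
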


\Cref{thm-sharp} yields strong laws of large numbers for \Alg\ in canonical settings. In the unstructured case, the same results have been shown for Thompson sampling and UCB-type algorithms \citep{CKa20,FGl22}. Hence, the profiling step does not incur significant information loss in the long run. In the unimodal case, \Cref{thm-sharp} shows that \Alg\ automatically adapts to the geometry of the constraint set and achieves the optimal constant associated with local exploration around the best arm \citep{CPr14}. The algorithm behaves as if knowing which suboptimal arms are asymptotically relevant for learning.

Taken together, \Cref{thm-asymptotic,thm-sharp} suggest a general picture: the regret of \Alg\ remains logarithmic under weak distributional assumptions, and its leading constant can improve substantially when structural constraints rule out certain alternatives to the true mean vector. Our analysis concerns the almost-sure behavior of the cumulative regret \(\regret(T)\) along the sample path, rather than the more commonly studied expected regret \(\EE[\regret(T)]\). Establishing matching asymptotic results for \(\EE[\regret(T)]\) under general mean constraints remains an interesting direction for future work.

\section{Proof sketches}

We present proof sketches for our main results.

\subsection{General preparations}

We begin by introducing notation for the pull counts and empirical means.

\begin{definition}\label{defn-pulls}
	For $j\in[K]$ and $t\in\ZZ_+$, let $\pullindexset_j(t)=\{i\in[t-1]:\action_i=j\}$ and $\pullcount_j(t)=|\pullindexset_j(t)|$. When $\pullcount_j(t)\ge1$, define $	\hat\mean_j(t)=\frac{1}{\pullcount_j(t)}\sum_{i\in\pullindexset_j(t)}\feedback_i,
	$ and set $\hat\mean_j(t)=\mean_j$ when $\pullcount_j(t)=0$.
	Denote by $\tau_{j,k}$ the time of the $k$-th pull of arm $j$, let $\xi_{j,k}=k^{-1}\sum_{i=1}^k \feedback_{\tau_{j,i}}$, and let $\history_t$ be the $\sigma$-field generated by $\dataset_t$.
\end{definition}

We relate the regret to pull counts through the following decomposition:
\begin{align}
	\regret(T)
	=\sum_{t=1}^T \Delta_{\action_t}
	= \sum_{t=1}^T \sum_{j\neq \opt} \Delta_j \one ( \action_t = j )
	=\sum_{j\neq \opt}\Delta_j\,\pullcount_j(T+1).
	\label{eqn-regret-decomposition}
\end{align}
To analyze the pull counts, we represent the posterior $\distQ_t$ through the profile loss. Remark~\ref{remark-MAB} and the uniform prior yield $\distQ_t(j)= e^{-\Lambda(j,\dataset_t)} / \sum_{k=1}^K e^{-\Lambda(k,\dataset_t)} $, where
\begin{align}
	\Lambda(j,\dataset_t)=\inf_{\btheta\in\spaceofparameter_j}\bigg\{\frac12\sum_{k=1}^K \pullcount_k(t+1)\bigl[\hat\mean_k(t+1)-\parameter_k\bigr]^2\bigg\}.
	\label{eqn-Lambda}
\end{align}
Hence
\begin{align}
	\frac{	\distQ_t(j) }{ \distQ_t(i) }
	=
	e^{\Lambda(i,\dataset_t)-\Lambda(j,\dataset_t)}
	,\qquad \forall i, j \in [K].
	\label{eqn-ratio}
\end{align}

{\color{black}
\subsection{Proof sketches for \Cref{thm-regret,cor-regret}}\label{sec-thm-regret-proof-sketch}

By \eqref{eqn-regret-decomposition}, we have
\begin{align}
	\EE [\regret(T) ] 	=  \sum_{j=1}^K \Delta_j \EE [ \pullcount_j(T+1) ] 
	=  \sum_{j=1}^K \Delta_j \bigg( \sum_{t=1}^{T}  \PP ( \action_t = j ) \bigg)  .
	\label{eqn-thm-regret-1}
\end{align}
Next, we invoke a lemma on the expected number of pulls of any sub-optimal arm. The proof borrows ideas from the analysis of Thompson sampling by \cite{AGo17} and is deferred to \Cref{sec-thm-regret-proof}.

\begin{lemma}\label{lem-regret-j}
	There exists a universal constant $C_0 > 0$ such that if $\Delta_j > 0$, then
	\begin{align*}
		& 
		\sum_{t=1}^{T}  \PP ( \action_t = j ) \leq C_0
		\bigg(
		\frac{ \sigma^2}{1 - \sigma^{-2}} \cdot  \frac{ \log ( \max\{ T \Delta_j^2 , e \} ) }{ \Delta_j^2 } 
		+ \frac{1}{ \sqrt{1 - \sigma^{-2}} }
		\bigg)
		.
	\end{align*}
\end{lemma}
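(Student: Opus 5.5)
The plan is to adapt the Agrawal--Goyal decomposition for Gaussian Thompson sampling. We replace their Gaussian posterior draws by the closed-form categorical posterior $\distQ_t(j)=e^{-\Lambda(j,\dataset_t)}/\sum_k e^{-\Lambda(k,\dataset_t)}$ from \eqref{eqn-Lambda}, with the factor $1/\sigma^2$ restored in front of the sum. Label the optimal arm $1$ and fix thresholds $\mu_j<x<y<\mu_1$ with $x-\mu_j=y-x=\mu_1-y=\Delta_j/3$. For each $t$ we split $\{\action_t=j\}$ into three pieces: (i) $\hat\mean_j(t)>x$; (ii) $\hat\mean_j(t)\le x$ and $\pullcount_j(t)\ge L$, with $L\asymp \sigma^2\log(\max\{T\Delta_j^2,e\})/\Delta_j^2$; (iii) $\hat\mean_j(t)\le x$ and $\pullcount_j(t)<L$.

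\emph{Piece (i).} Each occurrence of this piece increments $\pullcount_j$. Hence its total contribution is at most $1+\sum_{k\ge1}\PP(\xi_{j,k}>x)$. Under Assumption~\ref{assumption-subg} this sum is at most $\sum_k e^{-k\Delta_j^2/18}\lesssim \Delta_j^{-2}$.

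\emph{Piece (ii).} Here we use two deterministic facts about the unconstrained profile loss. First, $\min_k\Lambda(k,\dataset_{t-1})=0$, because the arm with the largest empirical mean is feasible at no cost; hence $\distQ_{t-1}(j)\le e^{-\Lambda(j,\dataset_{t-1})}$. Second, by restricting the infimum to coordinates $j$ and $1$, we get
\[
\Lambda(j,\dataset_{t-1})\ \ge\ \frac{\pullcount_j\pullcount_1}{2\sigma^2(\pullcount_j+\pullcount_1)}\,(\hat\mean_1-\hat\mean_j)_+^2 .
\]
We also need a companion bound that does not involve $\hat\mean_1$. If $v^\ast$ attains $\Lambda(j,\dataset_{t-1})$, then for every $k$ with $v_k^\ast\neq\hat\mean_k$ we have $v^\ast_k=v^\ast_j\ge\hat\mean_j$, and $v^\ast_j$ solves a one-dimensional pooling problem. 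This gives
\[
\Lambda(j,\dataset_{t-1})\ \ge\ \frac{\pullcount_j}{2\sigma^2}\,(v^\ast_j-\hat\mean_j)^2 .
\]
Combining the two lower bounds, on $\{\hat\mean_j\le x,\ \pullcount_j\ge L,\ \hat\mean_1\ge y\}$ the weight $\distQ_{t-1}(j)$ is at most $(T\Delta_j^2)^{-1}$ once the constant in $L$ is large. Summing over $t$ therefore costs $O(\Delta_j^{-2})$.

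\emph{Piece (iii), and the event $\hat\mean_1<y$.} This is the part with no clean absolute bound, since a pessimistic $\hat\mean_1$ can make $\distQ(1)$ tiny. Following Agrawal--Goyal, we write
\[
\PP(\action_t=j,\ \hat\mean_j\le x\mid\history_{t-1})\ \le\ \frac{\distQ_{t-1}(j)}{\distQ_{t-1}(1)}\,\PP(\action_t=1\mid\history_{t-1}),
\]
and use \eqref{eqn-ratio}: $\distQ(j)/\distQ(1)=e^{\Lambda(1)-\Lambda(j)}$. The key estimate I will aim for is
\[
\Lambda(1)-\Lambda(j)\ \le\ \frac{\pullcount_1}{2\sigma^2}\,(x-\hat\mean_1)_+^2 \qquad\text{whenever } \hat\mean_j\le x .
\]
The route is to take the minimiser $v^\ast$ for $\Lambda(j)$ and move coordinate $1$ up to the pooled top level, which makes the vector feasible for $\Lambda(1)$. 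The extra cost is $\pullcount_1[(\hat\mean_1-v^\ast_j)_+^2-(\hat\mean_1-v^\ast_1)^2]/(2\sigma^2)$. Bounding this by the displayed quantity requires controlling $v^\ast_j$ by $\max\{x,\hat\mean_1\}$ up to terms already paid for in $\Lambda(j)$. \textbf{This comparison is where I expect the main difficulty}, because the pooled level depends on all the other arms. My first attempt is to compare $v^\ast$ with the solution of the problem in which only arms $j$ and $1$ are pooled, and to use monotonicity of isotonic-type pooling.

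Granting the estimate, the ratio depends only on the first $\pullcount_1$ samples of arm $1$. A standard renewal argument over the pulls of arm $1$ then bounds the total contribution by
\[
\sum_{k\ge0}\EE\Bigl[e^{k(x-\xi_{1,k})_+^2/(2\sigma^2)}-1\Bigr].
\]
Since $\sqrt k\,(\mu_1-\xi_{1,k})$ is $1$-sub-Gaussian, we have $\EE\, e^{k(\mu_1-\xi_{1,k})_+^2/(2\sigma^2)}\lesssim (1-\sigma^{-2})^{-1/2}$; this is exactly where $\sigma>1$ enters. The shift $\mu_1-x=2\Delta_j/3$ makes the $k$-th term decay like $e^{-c(1-\sigma^{-2})k\Delta_j^2/\sigma^2}$ once $k\gtrsim\sigma^2\log(T\Delta_j^2)/\Delta_j^2$. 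Summing over $k$ yields the two terms $\frac{\sigma^2}{1-\sigma^{-2}}\cdot\frac{\log(\max\{T\Delta_j^2,e\})}{\Delta_j^2}$ and $\frac{1}{\sqrt{1-\sigma^{-2}}}$ in Lemma~\ref{lem-regret-j}. Collecting pieces (i)--(iii) finishes the proof; all constants are universal apart from the explicit $\sigma$-factors.
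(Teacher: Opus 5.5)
Your decomposition never conditions on $N_1(t)$, the pull count of the optimal arm, and both of your substantive steps fail without it.

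\textbf{Piece (ii).} The claim $\distQ_{t-1}(j)\le (T\Delta_j^2)^{-1}$ on $\{\hat\mu_j\le x,\ N_j\ge L,\ \hat\mu_1\ge y\}$ is false when $N_1$ is small. Take $K=2$, $N_1=1$, $\hat\mu_1=y$, $\hat\mu_j=x$ and any $N_j\ge L$. Then $\Lambda(1,\dataset_{t-1})=0$ and $\Lambda(j,\dataset_{t-1})=\frac{(y-x)^2}{2\sigma^2(1+1/N_j)}\le\frac{\Delta_j^2}{18\sigma^2}$. Hence $\distQ_{t-1}(j)\ge\frac12 e^{-\Delta_j^2/(18\sigma^2)}$, whatever $L$ and $T$ are. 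Your companion bound does not help here: the pooled level satisfies $v_j^\ast-\hat\mu_j=(y-x)/(1+N_j)$, so $N_j(v_j^\ast-\hat\mu_j)^2$ is tiny.

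\textbf{Piece (iii).} The key estimate is not merely hard to prove; it is false once $K\ge3$. Let $\hat\mu_1=\hat\mu_j=x$, $N_j=n$, $N_1=m$, and add an arm $k$ with $\hat\mu_k=x+a$ and $N_k=m$. Pooling arms $1$ and $k$ gives $\Lambda(1,\dataset_{t-1})=\frac{ma^2}{4\sigma^2}$. Pooling arms $j$ and $k$, and leaving arm $1$ alone, gives $\Lambda(j,\dataset_{t-1})=\frac{nma^2}{2\sigma^2(n+m)}\le\frac{na^2}{2\sigma^2}$. So $\Lambda(1,\dataset_{t-1})-\Lambda(j,\dataset_{t-1})\ge\frac{(m-2n)a^2}{4\sigma^2}>0$ for $m>2n$, whereas your estimate asserts this difference is $\le 0$. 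Taking $n\ge L$ puts the example squarely in the event you route to the ratio argument. Thus $\distQ_{t-1}(j)/\distQ_{t-1}(1)$ can be driven by other arms' data and is not a function of arm $1$'s samples. The renewal argument therefore has nothing to act on, and no comparison with two-arm pooling can fix this.

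\textbf{What is missing} is a comparison of $N_1$ with $N_j$. The swap arguments that do hold for every $K$ modify only coordinates $1$ and $j$ of a minimiser, and they carry count conditions (Lemma~\ref{lem-ratio-nonasymptotic}):
\begin{itemize}
\item if $\hat\mu_1\ge\hat\mu_j$ and $N_j\ge N_1$, then $\Lambda(1,\dataset_{t-1})\le\Lambda(j,\dataset_{t-1})$;
\item if $\hat\mu_1\le\hat\mu_j$ and $N_1<N_j$, then $\Lambda(1,\dataset_{t-1})-\Lambda(j,\dataset_{t-1})\le\frac{(\hat\mu_j-\hat\mu_1)^2}{2\sigma^2(1/N_1-1/N_j)}$.
\end{itemize}
The counterexample above has $N_1>N_j$, which is exactly what these conditions exclude.

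The paper therefore bounds the rounds with $N_j\le M$ trivially. On $\{N_j>M,\ \hat\mu_j<u_j\}$ it splits by $N_1$ against $M'=(1-c)M$, with $c=(1+\sigma^{-2})/2$.
\begin{itemize}
\item If $1\le N_1<M'$: then $1/N_1-1/N_j\ge c/N_1$ and $\hat\mu_j-\hat\mu_1\le\mu_1-\hat\mu_1$. So the ratio $\distQ_{t-1}(j)/\distQ_{t-1}(1)$ is at most $1$, or at most $\exp\bigl(N_1(\mu_1-\hat\mu_1)^2/(2c\sigma^2)\bigr)$. Charging these rounds to the fewer than $M'$ pulls of arm $1$ in this range, and using $c\sigma^2>1$, costs $O\bigl(M'/\sqrt{1-1/(c\sigma^2)}\bigr)$. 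This truncation to a bounded range of $N_1$ is essential. It costs a linear-in-$M'$ count rather than a decaying tail, and it is where $(1-\sigma^{-2})^{-1/2}$ enters.
\item If $N_1>M'$ and $\hat\mu_1>v_j$: your two-arm bound is now of order $M'\Delta_j^2/\sigma^2$, because both counts are large.
\item If $N_1>M'$ and $\hat\mu_1\le v_j$: this is a deviation event for a well-sampled arm $1$, which the paper controls with a time-uniform self-normalized inequality.
\end{itemize}

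Finally, your piece (iii) as defined ($N_j<L$) needs no ratio argument at all. It contributes at most $L$, since each occurrence increments $N_j$.
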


Choose any $\delta \geq 0$. Then,
\begin{align*}
	\sum_{j:~  \Delta_j \leq \delta} \Delta_j \bigg( \sum_{t=1}^{T}  \PP ( \action_t = j ) \bigg) 
	\leq 
	\max_{j:~\Delta_j \leq \delta} \Delta_j \cdot
	\sum_{t=1}^{T} \sum_{j = 1}^K \PP ( \action_t = j )
	= T \max_{j:~\Delta_j \leq \delta} \Delta_j.
\end{align*}
When $ \Delta_j >  \delta$, we use \Cref{lem-regret-j} to obtain that
\begin{align*}
	\Delta_j  \sum_{t=1}^{T}  \PP ( \action_t = j ) 
	& \lesssim
	\frac{   \log ( \max\{ T \Delta_j^2 , e \} )  }{\Delta_j } 
	+  \Delta_j  ,
\end{align*}
where $\lesssim$ hides a constant factor determined by $\sigma$. Combining the estimates with \eqref{eqn-thm-regret-1} yields \Cref{thm-regret}.

The obtained regret bound is expressed as an infimum over $\delta \geq 0$. We will take $\delta = 0$ and $\delta = e \sqrt{ T^{-1} K \log K }$ to show that up to a constant factor, $\EE[ \regret(T) ]$ is bounded by
\[
\sum_{j:~ \Delta_j > 0}
\frac{ \log T}{\Delta_j} + \sum_{j=1}^K \Delta_j
\qquad\text{and}\qquad
\sqrt{ T K \log K } + \sum_{j=1}^K	 \Delta_j  ,
\]
respectively. This proves \Cref{cor-regret}.

}

\subsection{Proof sketch for \Cref{thm-asymptotic}}\label{sec-thm-asymptotic-proof-sketch}

Given \eqref{eqn-regret-decomposition}, it suffices to prove for any $j\neq \opt$ that
\begin{align}
	\limsup_{T\to\infty}\frac{\pullcount_j(T)}{\log T}\le \frac{2}{\Delta_j^2}
	\qquad\text{a.s.}
	\label{eqn-pathwise-bound}
\end{align}

We will first lower bound the pull count of the optimal arm \(\opt\) by analyzing its posterior mass \(\distQ_t(\opt)\). To this end, choose \(\hat\action_t\in\argmax_{i\in[K]}\distQ_t(i)\). Since \(\distQ_t(\hat\action_t)\ge 1/K\), \eqref{eqn-ratio} and \eqref{eqn-Lambda} give
\begin{align}
	\distQ_t(\opt)
	=
	e^{\Lambda(\hat\action_t,\dataset_t)-\Lambda(\opt,\dataset_t)}\distQ_t(\hat\action_t)
	\ge
	\frac{e^{-\Lambda(\opt,\dataset_t)}}{K}
	\ge
	\frac{1}{K}
	\exp\!\left(
	-\frac12\sum_{k=1}^K \pullcount_k(t+1)[\hat\mean_k(t+1)-\mean_k]^2
	\right).
	\label{eqn-thm-asymptotic-1}
\end{align}
We will apply the Law of the Iterated Logarithm (LIL) to upper bound $\pullcount_k(t+1)[\hat\mean_k(t+1)-\mean_k]^2$ over time, which leads to a lower bound on \(\distQ_t(\opt)\).

Second, for any suboptimal arm \(j\neq \opt\), \eqref{eqn-ratio} and \(\distQ_t(\opt)\le 1\) imply
\begin{align}
	\distQ_t(j)
	=
	e^{\Lambda(\opt,\dataset_t)-\Lambda(j,\dataset_t)}\distQ_t(\opt)
	\le
	e^{\Lambda(\opt,\dataset_t)-\Lambda(j,\dataset_t)}.
	\label{eqn-thm-asymptotic-2}
\end{align}
We will show that if \(\pullcount_j(t)\) is large, then \(\Lambda(j,\dataset_t)-\Lambda(\opt,\dataset_t)\) must also be large, forcing \(\distQ_t(j)\) to be small. A conditional Borel--Cantelli argument then yields \eqref{eqn-pathwise-bound}. See \Cref{sec-thm-asymptotic-proof} for proof details.

\subsection{Proof sketch for \Cref{thm-sharp}}\label{sec-thm-sharp-proof-sketch}

We will show that the bound \eqref{eqn-pathwise-bound} is tight for any suboptimal arm whose mean can be raised to \(\mean_{\opt}\) without violating the constraints. Below is the formal statement. The proof is in \Cref{sec-lem-arm2-log-rate-proof}.

\begin{lemma}\label{lem-arm2-log-rate}
	Consider the setup of \Cref{thm-asymptotic}. Denote by \(\{ \bm{e}_k \}_{k=1}^K\) the canonical basis of \(\RR^K\). If \(j \in [K] \setminus \{ \opt \}\) satisfies \(\vectorofmeans + ( \mean_{\opt} - \mean_j )  \bm{e}_j \in \spaceofparameter\), then
	\[
	\lim_{T\to\infty} \frac{ \pullcount_j(T) }{ \log T } = \frac{2}{ \Delta_j^2 }
	\qquad\text{a.s.}
	\]
\end{lemma}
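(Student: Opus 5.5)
The upper bound $\limsup_T \pullcount_j(T)/\log T\le 2/\Delta_j^2$ is \eqref{eqn-pathwise-bound}, already established in the proof of \Cref{thm-asymptotic}. It therefore remains to prove the matching lower bound $\liminf_T \pullcount_j(T)/\log T\ge 2/\Delta_j^2$ almost surely. Two quantities drive the argument: an upper bound on the profile loss $\Lambda(j,\dataset_t)$, and a lower bound on the posterior mass $\distQ_t(j)$ obtained through \eqref{eqn-ratio}.

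\textbf{Step 1 (facts about the optimal arm).} From the proof of \Cref{thm-asymptotic}, every suboptimal arm is pulled $O(\log t)$ times a.s. Hence $\pullcount_{\opt}(t)/t\to1$, and by the strong law $\hat\mean_k(t)\to\mean_k$ for every arm pulled infinitely often. I also take from that proof (or reprove by the same route) that every arm is pulled infinitely often. The reason is that $\distQ_t(k)\ge K^{-1}e^{-\Lambda(k,\dataset_t)}$, and if arm $k$ stopped being pulled, $\Lambda(k,\dataset_t)$ would grow at most linearly in $\pullcount_{\opt}$ while the probabilities summed to infinity. This gives a contradiction via conditional Borel--Cantelli. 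I expect this to be delicate, and I would first use the crude bound $\Lambda(k,\cdot)\le \tfrac12\pullcount_{\opt}(\cdot)(\hat\mean_{\opt}-\hat\mean_k)^2$ only together with $\distQ_t(k)$ being summable or not.

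\textbf{Step 2 (upper bound on $\Lambda(j,\dataset_t)$).} Consider the feasible point $\btheta=\vectorofmeans+(\mean_{\opt}-\mean_j)\bm e_j\in\spaceofparameter$. In this point arms $j$ and $\opt$ tie for the maximum, so $\btheta\in\spaceofparameter_j$. Plugging $\btheta$ into \eqref{eqn-Lambda} gives
\[
\Lambda(j,\dataset_t)\le \tfrac12\sum_{k\ne j}\pullcount_k(t+1)[\hat\mean_k(t+1)-\mean_k]^2+\tfrac12\pullcount_j(t+1)[\hat\mean_j(t+1)-\mean_{\opt}]^2 .
\]
By the LIL, the first sum is $O(\log\log t)$ a.s. For the second term, suppose that on some event $\pullcount_j(t)\le c\log t$ with $c<2/\Delta_j^2$ for infinitely many $t$. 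Since $\hat\mean_j\to\mean_j$, the second term is then at most $\tfrac12 c\log t\,(\Delta_j+o(1))^2\le(1-\epsilon)\log t$ for some $\epsilon>0$. Meanwhile $\Lambda(\opt,\dataset_t)\ge0$, and $\distQ_t(\hat\action_t)\ge 1/K$ together with \eqref{eqn-ratio} gives $\distQ_t(j)\ge K^{-1}e^{-\Lambda(j,\dataset_t)}\ge K^{-1}t^{-(1-\epsilon)}e^{-O(\log\log t)}$.

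\textbf{Step 3 (contradiction via Borel--Cantelli).} This is the main obstacle. The bound in Step 2 holds only while $\pullcount_j$ is small, and it must be made to hold over a long window of times. Fix $c<c'<2/\Delta_j^2$ and consider the window $t\in[T,T^{\gamma}]$ with $\gamma>1$ chosen so that $c\log T^\gamma<c'\log T$ still keeps the exponent below $1-\epsilon'$. On the event $\{\pullcount_j(T)\le c\log T\}$, as long as $\pullcount_j$ stays below $c'\log T$, each step pulls $j$ with conditional probability at least $t^{-(1-\epsilon')}$. Summed over the window, the expected number of pulls is of order $T^{\gamma\epsilon'}\gg\log T$. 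A martingale (Freedman) concentration bound then shows that, except with probability summable along $T=2^m$, arm $j$ receives more than $(c'-c)\log T$ pulls in the window. This contradicts $\pullcount_j\le c\log t$ at later times. Interpolating between dyadic $T$ and taking $c\uparrow2/\Delta_j^2$ along rationals gives the liminf bound, which combined with \eqref{eqn-pathwise-bound} proves the lemma.
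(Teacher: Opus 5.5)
Your Steps 2--3 are essentially the paper's argument. You plug the feasible point $\vectorofmeans+\Delta_j\bm{e}_j\in\spaceofparameter_j$ into the profile loss and use the LIL to get $\PP(\action_t=j\mid\history_{t-1})\ge t^{-(1-\varepsilon)}$ whenever $\pullcount_j(t)\le\beta\log t$. You then turn this into a pull-count lower bound through block-wise martingale estimates with summable failure probabilities. Two variations are harmless. You bound $\Lambda(j,\dataset_t)$ directly via $\distQ_t(j)\ge K^{-1}e^{-\Lambda(j,\dataset_t)}$, where the paper uses \Cref{lem-sandwich} plus the lower bound on $\distQ_{t-1}(\opt)$. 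You use polynomial windows with Freedman, where the paper uses dyadic blocks with the supermartingale $\exp(-\sum_s Y_s+(1-e^{-1})\sum_s q_s)$.

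The problem is Step 1, and Step 2 as written relies on it through $\hat\mean_j\to\mean_j$. The proof of \Cref{thm-asymptotic} does not show that every arm is pulled infinitely often, and for general $\spaceofparameter$ this is false. For example, if $K\ge3$ and $\spaceofparameter=\{\btheta\in\RR^K:\theta_3\le\theta_1-1\}$, then $\spaceofparameter_3=\varnothing$, so $\distQ_t(3)=0$ for all $t\ge1$. Your heuristic also fails on its own terms. If $\Lambda(k,\dataset_t)$ grew linearly in $\pullcount_{\opt}\asymp t$, then $e^{-\Lambda(k,\dataset_t)}$ would be summable, and conditional Borel--Cantelli would yield no contradiction. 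Moreover, invoking $\hat\mean_j\to\mean_j$ presupposes $\pullcount_j\to\infty$, which is part of what you are proving.

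Drop Step 1; you need neither $\pullcount_{\opt}(t)/t\to1$ nor the strong law. Instead, for $\rho>0$ write $(\hat\mean_j-\mean_{\opt})^2\le(1+\rho)\Delta_j^2+(1+\rho^{-1})(\hat\mean_j-\mean_j)^2$. \Cref{lem-lil} also covers arms pulled only finitely often, so $\pullcount_j(\hat\mean_j-\mean_j)^2=O(\log\log t)$ a.s. Hence $\Lambda(j,\dataset_t)\le\frac{1+\rho}{2}\Delta_j^2\pullcount_j(t+1)+O(\log\log t)$, and your Step 2 goes through once $\frac{1+\rho}{2}c\Delta_j^2<1$. This decomposition is exactly what \Cref{lem-sandwich} provides in the paper.

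In Step 3, conditioning on $\{\pullcount_j(T)\le c\log T\}$ is not the right event to organize around. It is cleaner to control $G_m=\{\pullcount_j(\lfloor 2^{m\gamma}\rfloor)\le c'm\log 2\}$ directly, with $\gamma c<c'$. If $G_m$ fails for all large $m$, then $\pullcount_j(t)\ge\frac{c'm}{\gamma(m+1)}\log t$ on $[2^{m\gamma},2^{(m+1)\gamma})$, which gives the liminf. Also, Freedman's inequality with the window length as variance proxy is too weak, because the guaranteed drift $\sum_t q_t$ is only of order $T^{\gamma\varepsilon'}$. Either stop the martingale once $\sum_t q_t$ reaches that level (or peel), or use the paper's exponential supermartingale, which needs no variance control.
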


In the unstructured bandit, every suboptimal arm $j \neq \opt$ satisfies the condition of \Cref{lem-arm2-log-rate}. The resulting pull count asymptotics and the decomposition \eqref{eqn-regret-decomposition} lead to \eqref{eqn-thm-sharp-1}.
For the unimodal bandit, \Cref{lem-arm2-log-rate} applies to the immediate neighbors of \(\opt\). It suffices to show that
\begin{align}
	\lim_{T\to\infty} \frac{ \pullcount_j(T) }{ \log T } = 0
	\qquad\text{almost surely when }
	|j - \opt| \geq 2.
	\label{eqn-unimodal-0}
\end{align}
In words, suboptimal arms not adjacent to $\opt$ get eliminated quickly. See \Cref{sec-eqn-unimodal-0-proof} for the proof.

\section{Numerical experiments}\label{sec-experiments}

We evaluate \Alg\ on a multi-armed bandit with Gaussian rewards (\Cref{example-MAB-Gaussian}).\footnote{{\color{black}
The Python code for reproducing the results is available at \url{https://github.com/kw2934/MINTS}.}}
There are $K = 12$ arms, the expected rewards are given by a unimodal vector:
\[
\vectorofmeans = (0,\, 0.1,\, 0.2,\, 0.3,\, 0.4,\, \textbf{0.7},\, 0.5,\, 0.5,\, 0.4,\, 0.2,\, 0.2,\, 0.1),
\]
and all variances are equal to $\sigma = 1$. We run $100$ independent replications in a horizon of $T=20000$ time steps. We implement two groups of bandit algorithms:

\begin{itemize}
\item Three unstructured algorithms: \Alg, UCB [the UCB1 algorithm in \cite{ACF02}, which is the same as the KL-UCB algorithm in \cite{GCa11} for Gaussian rewards with the recommended choice of hyperparameter $c=0$], and Thompson sampling \citep{Tho33}.

\item Four unimodal algorithms: \Alg, OSUB \citep{CPr14}, UTS \citep{PTR17}, and OSSB \citep{CMP17}. Both \Alg\ and OSSB can handle bandits with general mean constraints, and we apply them to the unimodal constraint set in \eqref{eqn-unimodal}.
\end{itemize}

\begin{figure}[h!]
	\centering
	\includegraphics[width=\textwidth]{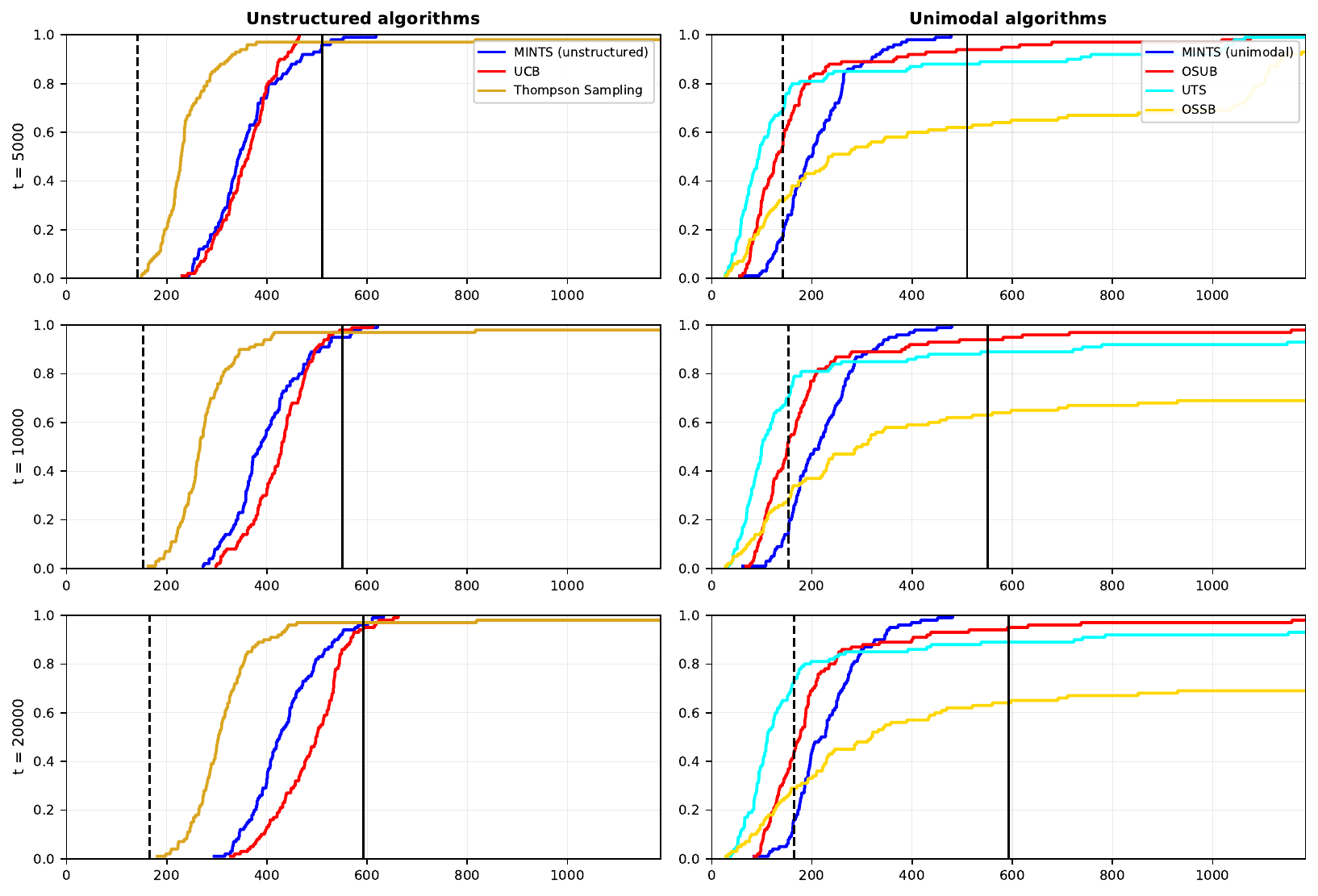}
	\caption{Empirical cumulative distribution functions of regrets at $t \in \{5000, 10000, 20000\}$ for unstructured algorithms (left column) and unimodal algorithms (right column). Vertical lines mark the asymptotic lower bounds: unstructured (solid) and unimodal (dashed).
	}
	\label{fig:hist}
\end{figure}

Figure~\ref{fig:hist} shows the empirical cumulative distribution functions of cumulative regrets at three checkpoints, $t \in \{$5000, 10000, 20000$\}$, over the 100 independent runs. The solid vertical line shows the asymptotic lower bound $ \sum_{j\neq \opt} \frac{2 \log t }{\Delta_j } $ in the unstructured setting \citep{LRo85}. Similarly, the dashed vertical line corresponds to the smaller lower bound $\sum_{j:\,|j-\opt|=1}  \frac{2 \log t }{\Delta_j } $ in the unimodal setting \citep{CPr14}.

In the unstructured setting, \Alg\ achieves slightly lower regret than UCB, and Thompson sampling performs best among the three algorithms. Their cumulative regrets all concentrate below the Lai--Robbins benchmark, showing better finite-sample performance than predicted by the asymptotic limit.
In the unimodal setting, all structured algorithms track the Combes--Proutiere benchmark. \Alg\ has the tightest concentration, whereas OSUB, UTS and OSSB have significantly heavier tails. The latter three rely on point estimates of the optimal arm and are susceptible to misidentification under high noise (our noise level $\sigma = 1$ is larger than the suboptimality gaps in $\vectorofmeans$). Once trapped, they accumulate regret for extended periods before recovering.

The numerical results support our asymptotic analysis in \Cref{thm-sharp} and demonstrate competitive performance of \Alg. While profiling out nuisance components can lead to information loss (\Alg\ underperforms Thompson sampling in the unstructured case), the impact does not seem consequential. By contrast, the flexibility gain in the structured case is substantial.

\section{Discussion}\label{sec-discussion}

We introduced a minimalist Bayesian framework for stochastic optimization that only requires a prior for the component of interest and handles nuisance parameters via profile likelihood. The lightweight modeling makes it easy to incorporate structural constraints on problem parameters, opening several promising avenues for future research. First, a precise characterization of the arm-pull dynamics of \Alg\ remains an open problem, complementing a recent line of work on UCB and Thompson sampling \citep{KZe21,FGl25,HKZ24,Han26}. Second, designing scalable algorithms for sampling from the generalized posterior is critical for handling continuous or high-dimensional spaces. Third, developing more sophisticated acquisition rules beyond simple posterior sampling could further improve performance. Beyond these refinements, extending the minimalist principle to contextual bandits and reinforcement learning presents an exciting frontier.

\section*{Acknowledgement}
The author thanks Yeon-Koo Che, Yaqi Duan and Chengpiao Huang for helpful discussions. This research is supported by NSF grants DMS-2210907 and DMS-2515679.


\appendix

\section{Proofs of \Cref{cor-regret,thm-regret}}

\subsection{Proof of \Cref{cor-regret}}\label{sec-cor-regret-proof}

The result is trivial when $K = 1$ or $T = 1$. From now on, we assume $K \geq 2$, $T \geq 2$ and use $\lesssim$ to hide constant factors determined by $\sigma$.

By taking $\delta = 0$ in the regret bound in \Cref{thm-regret}, we obtain that
\[
\EE[ \regret(T) ]
\lesssim
\sum_{j:~ \Delta_j > 0}
\frac{ \log (  
	\max \{ T \Delta_j^2, e \} 	
	) }{\Delta_j } 
+ 
\sum_{j=1}^K
\Delta_j  .
\]
Note that 
\begin{align*}
	\frac{ \log (  
		\max \{ T \Delta_j^2, e \} 	
		) }{\Delta_j } 
	& = \frac{ 
		\max \{ \log T + 2 \log \Delta_j, 1 \}
	}{\Delta_j } 
	\leq\frac{1 + \log T}{\Delta_j} + \frac{ 2 \log (1 + \Delta_j ) }{\Delta_j} \\
	&
	\overset{(\mathrm{i})}{\leq}
	\frac{1 + \log T}{\Delta_j} + 2
	\overset{(\mathrm{ii})}{\leq}
	\frac{1 + \log T}{\Delta_j} + \bigg(
	\Delta_j + \frac{1}{\Delta_j}
	\bigg)
	\lesssim \frac{ \log T}{\Delta_j} + \Delta_j
	,
\end{align*}
where $(\mathrm{i})$ and $(\mathrm{ii})$ follow from elementary inequalities $\log (1+z) \leq z$ and $z + 1/z \geq 2$ for all $z > 0$. As a result, we get
\begin{align}
	\EE[ \regret(T) ]
	\lesssim
	\sum_{j:~ \Delta_j > 0}
	\frac{ \log T}{\Delta_j} + \sum_{j=1}^K \Delta_j
	\label{eqn-cor-1}
\end{align}

On the other hand, \Cref{thm-regret} implies that
\[
\EE [\regret(T)] \lesssim
\inf_{\delta \geq 0}
\bigg\{
\sum_{j:~ \Delta_j > \delta}
\frac{ \log (  
	\max \{ T \Delta_j^2, e \} 	
	) }{\delta}
+  T \delta
\bigg\}
+ \sum_{j=1}^K	 \Delta_j  .
\]
Choose any $\delta \geq e / \sqrt{T}$. When $\Delta_j > \delta$, we have $T \Delta_j^2 \geq e^2 > e$ and
\begin{align*}
	\frac{
		\log (  
		\max \{ T \Delta_j^2, e \} 	
		)	
	}{\Delta_j } =
	\frac{ 2 \log ( \sqrt{T} \Delta_j )}{\Delta_j}=
	2 \sqrt{T} \cdot \frac{ \log ( \sqrt{T} \Delta_j ) }{
		\sqrt{T} \Delta_j 
	} 
	\leq 2 \sqrt{T} \cdot  \frac{ \log ( \sqrt{T} \delta ) }{
		\sqrt{T} \delta 
	} =  \frac{ 2 \log ( \sqrt{T} \delta ) }{
		\delta 
	} ,
\end{align*}
where the inequality follows from the facts that $\sqrt{T} \Delta_j \geq \sqrt{T} \delta \geq e$ and $z \mapsto z^{-1} \log z$ is decreasing on $[e, +\infty)$. Consequently,
\[
\EE [\regret(T)]
\lesssim
\inf_{\delta \geq e / \sqrt{T}}
\bigg\{
\frac{ K \log ( \sqrt{T} \delta) }{\delta}
+  T \delta
\bigg\}
+ \sum_{j=1}^K	 \Delta_j  .
\]
Taking $\delta = e \sqrt{ T^{-1} K \log K }$, we get $	\EE [\regret(T)]
\lesssim 	\sqrt{ T K \log K } + \sum_{j=1}^K	 \Delta_j  $. The proof is completed by combining the above estimate and \eqref{eqn-cor-1}.

\subsection{Proof of \Cref{thm-regret}}\label{sec-thm-regret-proof}

Following the sketch in \Cref{sec-thm-regret-proof-sketch}, it suffices to prove \Cref{lem-regret-j}.

\subsubsection{Preparations}

Choose any $M > 0$ and $j \in [K]$ such that $\Delta_j > 0$. Define $u_j = \mean_j + \Delta_j / 3$, $v_j = \mean_j + 2 \Delta_j / 3$, and
\begin{align*}
	& \cJ_1 =  \sum_{t=1}^T \PP [ \action_t = j, \pullcount_j(t) \leq M ]
	,\qquad \cJ_2 = \sum_{t=1}^T \PP [ \action_t = j, \hat{\mean}_j(t) \geq u_j ] , \\
	& \cJ_3 = \sum_{t=1}^T \PP [ \action_t = j,\pullcount_j(t) > M, \hat{\mean}_j(t) < u_j ] .
\end{align*}
We have a decomposition
\begin{align}
	\sum_{t=1}^{T}  \PP ( \action_t = j ) \leq \cJ_1 + \cJ_2 + \cJ_3 .
	\label{eqn-J-0}
\end{align}
By definition,
\begin{align}
& 	\cJ_1 
	= \sum_{t=1}^T \EE \Big( \one [ \action_t = j, \pullcount_j(t) \leq M ] \Big)
	= \EE \bigg( 
	\sum_{t=1}^T \one [ \action_t = j, \pullcount_j(t) \leq M ]
	\bigg) \leq M + 1
	,	\label{eqn-J-3} 
	\\ 
& \cJ_2 \leq \sum_{k = 1 }^{\infty} \PP ( \xi_{j, k} \geq u_j )
	= \sum_{k = 1 }^{\infty} 
	 \PP ( \xi_{j, k} - \mean_j \geq \Delta_j / 3 )  .
		\label{eqn-J-2-1} 
\end{align}
We invoke useful concentration bounds on the difference between the empirical average reward $\xi_{j, k}$ and the expectation $\mean_j$.

\begin{lemma}\label{lem-tail}
	Under Assumption \ref{assumption-subg}, we have 
	\begin{align*}
		& \PP  (
		\xi_{j, k} - \mean_j \geq t
		) \leq  e^{-kt^2 / 2} , \qquad \forall t \geq 0 , \\
		& \PP  (
		\xi_{j, k} - \mean_j \leq - t
		) \leq  e^{-kt^2 / 2} , \qquad \forall t \geq 0 , \\
		& \EE e^{ \lambda k ( \xi_{j, k} - \mean_j )^2 /2} \leq \frac{1}{\sqrt{1 - \lambda}}, \qquad \forall \lambda \in [0, 1).
	\end{align*}
\end{lemma}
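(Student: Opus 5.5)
The plan is to prove all three bounds in \Cref{lem-tail} from one structural fact. The rewards collected from arm $j$ at its successive pull times $\tau_{j,1},\tau_{j,2},\dots$ behave like an i.i.d.\ sequence from $\distP_j$. Concretely, I would use the standard ``reward table'' (stack-of-rewards) construction: write $\feedback_{\tau_{j,i}} = \rewardrv_{j,i}$, where $\{\rewardrv_{j,i}\}_{i\ge1}$ are i.i.d.\ draws from $\distP_j$, independent across arms and of the algorithm's internal randomization. This construction does not change the law of the interaction, and it makes $\xi_{j,k}$ an average of $k$ i.i.d.\ $1$-sub-Gaussian variables. Alternatively, one can argue with the martingale $\exp(\lambda\sum_{i\le k}(\feedback_{\tau_{j,i}}-\mean_j)-k\lambda^2/2)$, a supermartingale with respect to the filtration at pull times, which avoids the coupling. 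Either way, Assumption \ref{assumption-subg} gives
\[
\EE\, e^{\lambda k(\xi_{j,k}-\mean_j)} \le e^{k\lambda^2/2}, \qquad \forall \lambda\in\RR.
\]

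The two tail bounds then follow from Chernoff's method. By Markov's inequality, $\PP(\xi_{j,k}-\mean_j\ge t)\le e^{-\lambda k t + k\lambda^2/2}$ for every $\lambda\ge0$, and choosing $\lambda=t$ gives $e^{-kt^2/2}$. The lower tail is symmetric: apply the same argument with $-\lambda$.

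For the third bound I would set $Z=\sqrt{k}(\xi_{j,k}-\mean_j)$, which satisfies $\EE e^{sZ}\le e^{s^2/2}$ for all $s$. The key step is Gaussian linearization of the square. For $g\sim N(0,1)$ independent of $Z$, we have $e^{\lambda Z^2/2}=\EE_g e^{\sqrt{\lambda}\, g Z}$. By Fubini (all integrands are nonnegative),
\[
\EE e^{\lambda Z^2/2} = \EE_g \EE_Z e^{\sqrt{\lambda} g Z} \le \EE_g e^{\lambda g^2/2} = (1-\lambda)^{-1/2}, \qquad \lambda\in[0,1).
\]

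The only delicate point is the first step: making sure the pull-time averages really are sub-Gaussian despite the random, adaptively chosen times $\tau_{j,i}$. One also has to note that $\tau_{j,k}$ may be infinite. I would handle this with the reward-table coupling, under which the event $\{\tau_{j,k}<\infty\}$ is irrelevant, since $\xi_{j,k}$ is defined through the table. The remaining computations are routine.
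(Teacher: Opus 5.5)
Your proposal is correct and follows essentially the paper's route. The paper treats the centered pull-time rewards as a martingale difference sequence to show that $\xi_{j,k}$ is $k^{-1}$-sub-Gaussian (Wainwright's Theorem 2.19), then cites the equivalent characterizations of sub-Gaussianity (Wainwright's Theorem 2.6), whose proof is exactly your Chernoff and Gaussian-linearization steps; your reward-table coupling is a harmless alternative to the martingale argument and handles the possibility $\tau_{j,k}=\infty$ more explicitly than the paper does.
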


\paragraph{Proof of \Cref{lem-tail}}
Note that $\{ \xi_{j, k} - \mean_j \}_{k=1}^{\infty}$ is a martingale difference sequence with respect to the filtration $\{ \history_{\tau_{j,k}} \}_{k=1}^{\infty}$. Theorem 2.19 in \cite{Wai19} yields the desired tail bounds on $\xi_{j, k} - \mean_j $, together with the fact that $\xi_{j, k}$ is $k^{-1}$-sub-Gaussian. The proof is then completed by applying Theorem 2.6 in \cite{Wai19}.


By \eqref{eqn-J-2-1} and \Cref{lem-tail}, $\cJ_2 
\leq  \sum_{k = 0 }^{\infty} e^{-k \Delta_j^2 / 18} = ( 1 -  e^{-\Delta_j^2 / 18} )^{-1} $. For any $z > 0$, we have $e^z \geq 1 + z$ and thus $e^{-z} \leq  (1+z)^{-1}$. Then,
\begin{align}
	\cJ_2 
	\leq \frac{1}{1 -  
		 (1+\Delta_j^2 / 18)^{-1}
}
= \frac{18}{\Delta_j^2} + 1.
	\label{eqn-J-2}
\end{align}
It remains to bound $\cJ_3$. 

\subsubsection{Bounding $\cJ_3$}

Without loss of generality, we assume $\mean_1 = \max_{k \in [K]} \mean_k$ throughout the proof. As a result, $\Delta_j = \mean_1 - \mean_j$. Let $c\in (0, 1)$ be a constant to be determined, and $M'  = (1-c) M$. We have
\begin{align*}
	& \PP [ \action_t = j, \pullcount_j(t) > M, \hat{\mean}_j(t) < u_j ]  \\
	& = \PP [ \action_t = j, \pullcount_j(t) > M, \hat{\mean}_j(t) < u_j, \pullcount_1(t) > M' ,
	\hat{\mean}_1 (t) > v_j] \\
	&\quad + \PP [ \action_t = j, \pullcount_j(t) > M, \hat{\mean}_j(t) < u_j, 
	\pullcount_1(t) > M' , \hat{\mean}_1 (t) \leq v_j  ] \\
	&\quad + \PP [ \action_t = j, \pullcount_j(t) > M, \hat{\mean}_j(t) < u_j, 
	1 \leq \pullcount_1(t) < M', \hat{\mean}_1 (t) > \hat{\mean}_j (t) ] \\
	&\quad + \PP [ \action_t = j, \pullcount_j(t) > M, \hat{\mean}_j(t) < u_j, 
	1 \leq \pullcount_1(t) < M', \hat{\mean}_1 (t) \leq \hat{\mean}_j (t) ] \\
	&\quad + \PP [ \action_t = j, \pullcount_j(t) > M, \hat{\mean}_j(t) < u_j, \pullcount_1(t) = 0 ] .
\end{align*}
Denote by $\cE_{1, t}$, $\cE_{2, t}$, $\cE_{3, t}$, $\cE_{4, t}$ and $\cE_{5, t}$ the five summands on the right-hand side. We have 
\begin{align}
	\cJ_3 \leq \sum_{t=1}^{T} ( \cE_{1, t} + \cE_{2, t} + \cE_{3, t} + \cE_{4, t} + \cE_{5, t} )
	\label{eqn-J-1-0}
\end{align}
We will control the $\cE_{j, t}$'s individually. The following fact will come in handy: for any $\history_{t-1}$-measurable event $\eventA$,
\begin{align}
\PP ( \{ \action_t = j \} \cap \eventA )
& = \EE [ \PP (  \{ \action_t = j \} \cap \eventA | \history_{t-1} ) ] 
= \EE [ \PP ( \action_t = j  | \history_{t-1} ) \cdot \one (\eventA) ] 
= \EE [ \distQ_{t-1}(j) \one (\eventA) ] .
	\label{eqn-conditioning-1}
\end{align}
We also need to characterize the generalized posterior $\distQ_{t}$. Remark \ref{remark-MAB} implies that
\[
\distQ_{t} (j) 
= \frac{ e^{- 	\Lambda ( j , \dataset_{t} ) } \distQ_0 (j) }{ 
	\sum_{k=1}^{K} e^{ - 
		\Lambda ( k , \dataset_{t} ) }  \distQ_0 (k)
} ,
\]
where
\begin{align}
	\Lambda ( j , \dataset_{t} ) = 
	\min_{ \btheta \in \spaceofparameter_j } \bigg\{
	\frac{1}{2 \sigma^2 } \sum_{k=1}^{K} 
	\pullcount_k (t+1) [ \hat\mean_k (t+1) - \parameter_k ]^2 	
	\bigg\}.
		\label{eqn-Lambda-nonasymptotic}
\end{align}
Then, we have
\begin{align}
	\frac{\distQ_{t} (j) }{
		\distQ_{t} (i) 
	}
	= e^{\Lambda(i, \dataset_{t}) - \Lambda(j, \dataset_{t})} 
		\frac{\distQ_0 (j) }{
				\distQ_0 (i) 
			}
	.
	\label{eqn-ratio-nonasymptotic}
\end{align}

We invoke some useful estimates for $\Lambda$, whose proof is deferred to \Cref{sec-lem-ratio-proof}.

\begin{lemma}\label{lem-ratio-nonasymptotic}
	Suppose that $i, j \in [K]$ and $\hat{\mean}_i(t) \geq \hat{\mean}_j(t)$.
	\begin{enumerate}
		\item\label{lem-ratio-1} 
		We have
		\[
		\Lambda (j, \dataset_{t-1})
		\geq \frac{1}{2 \sigma^2} \cdot \frac{ [ \hat\mean_{i} (t)  - \hat\mean_j (t)  ]^2 }{  1/ \pullcount_{i} (t)  + 1/ \pullcount_j  (t) } .
		\]
		
		\item\label{lem-ratio-2} If $\pullcount_j(t) \geq \pullcount_i(t)$, then $ \Lambda ( i, \dataset_{t-1}) - \Lambda (j, \dataset_{t-1}) \leq 0 $.
		
		\item\label{lem-ratio-3} If $\pullcount_j(t) < \pullcount_i(t)$, then
		\[
		\Lambda ( i, \dataset_{t-1}) - \Lambda (j, \dataset_{t-1}) 
		\geq 
		- \frac{1}{2 \sigma^2}
		\cdot
		\frac{
			(  \hat\mean_i  - \hat\mean_j )^2
		}{
			1/\pullcount_j   - 1/ \pullcount_i 
		}
		.
		\]
	\end{enumerate}
\end{lemma}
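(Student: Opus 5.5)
Throughout, $\spaceofparameter=\RR^K$, as in the unstructured setting of \Cref{thm-regret}. Abbreviate $n_k=\pullcount_k(t)$ and $\hat\mean_k=\hat\mean_k(t)$, so that $\Lambda(j,\dataset_{t-1})=\min_{\btheta\in\spaceofparameter_j}\frac{1}{2\sigma^2}\sum_k n_k(\hat\mean_k-\parameter_k)^2$ with $\spaceofparameter_j=\{\btheta:\parameter_j\ge\parameter_k\ \forall k\}$. Write $D=\hat\mean_i-\hat\mean_j\ge0$. The first step is to record the explicit minimizer over $\spaceofparameter_j$. It has a water-level form: there is a level $c$ with $\parameter_j^*=c\ge\hat\mean_j$ and $\parameter_k^*=\min\{\hat\mean_k,c\}$ for $k\neq j$. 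Here $c$ is the unique point where the weighted excess $\sum_{k\ne j}n_k(\hat\mean_k-c)_+$ equals $n_j(c-\hat\mean_j)$. This follows from KKT for a separable convex quadratic with the convex constraint set $\spaceofparameter_j$.

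For part \ref{lem-ratio-1}, I drop every constraint except $\parameter_j\ge\parameter_i$. This enlarges the feasible set, so it lower-bounds $\Lambda(j,\cdot)$. It also leaves only two coordinates with nonzero cost: minimize $n_i(\hat\mean_i-a)^2+n_j(\hat\mean_j-b)^2$ subject to $b\ge a$. Since $\hat\mean_i\ge\hat\mean_j$, the constraint is active. Putting $a=b$ gives the weighted-average formula $D^2/(1/n_i+1/n_j)$. If some count is zero, both sides are handled by the convention $1/0=\infty$, which makes the bound $0$.

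For part \ref{lem-ratio-2}, let $a=\parameter^*_i\le b=\parameter^*_j=c$ be the minimizer for $\Lambda(j)$. By the water-level form, either $a=b$ or $a=\hat\mean_i<b$. If $a=b$, the same point is feasible for $\spaceofparameter_i$ with equal cost. Otherwise I swap the two coordinates, which is feasible for $\spaceofparameter_i$. The change in cost is
\[
(b-\hat\mean_i)\bigl[n_i(b-\hat\mean_i)+n_j(2\hat\mean_j-\hat\mean_i-b)\bigr]
\le 2n_j(b-\hat\mean_i)(\hat\mean_j-\hat\mean_i)\le0,
\]
using $n_i\le n_j$ and $b\ge\hat\mean_i$. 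Hence $\Lambda(i)\le\Lambda(j)$.

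Part \ref{lem-ratio-3} is the main obstacle: I must show $\Lambda(j)\le\Lambda(i)+\frac{1}{2\sigma^2}D^2/(1/n_j-1/n_i)$. I will start from the minimizer for $\Lambda(i)$, with level $c\ge\hat\mean_i$-side structure, and build feasible points for $\spaceofparameter_j$. These points keep all coordinates $k\notin\{i,j\}$ fixed and move only the pair $(\parameter_i,\parameter_j)$ to values $(x,y)$ with $y\ge\max\{x,c\}$ or a suitable relaxation. The cost increase is then a one-dimensional quadratic in the free level. Because $n_j<n_i$, its coefficient structure is concave in the right variable, and the worst case over the unknown level should be maximized in closed form.

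A naive swap gives the constant $n_i^2D^2/(n_i-n_j)$, which is too large. So I expect I must choose the new pair more cleverly. My first attempt is to set both $\parameter_i'$ and $\parameter_j'$ equal to a common level $y$ and optimize $y$. I would then compare with the two-coordinate problem, checking that the excess is at most $\sup_x\{n_j(x-\hat\mean_j)^2-n_i(x-\hat\mean_i)^2\}=n_in_jD^2/(n_i-n_j)=D^2/(1/n_j-1/n_i)$. This supremum is exactly the claimed bound. The delicate part is the case split on $c$ versus $\hat\mean_i,\hat\mean_j$, which is needed to verify that the other coordinates stay feasible.
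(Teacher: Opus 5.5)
Your Parts~1 and~2 are correct. Part~1 is the paper's two-coordinate relaxation. Your swap in Part~2 (putting $\parameter_j'=\hat\mean_i$) is a valid variant of the paper's comparison point, which puts $\parameter_j'$ at $\hat\mean_j$ instead; under $n_i\le n_j$ both give a nonpositive change in cost.

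\textbf{The gap is in Part~3.} You never settle on a feasible point that achieves the bound, and the one you propose fails. Let $\bar\btheta$ be the water-level minimizer for $\Lambda(i)$: $\bar\parameter_i=c\ge\hat\mean_i$, $\bar\parameter_j=\hat\mean_j$, and $\bar\parameter_k=\min\{\hat\mean_k,c\}$ otherwise. Suppose $c>\hat\mean_i$. Since $n_i\ge1$, the level equation then forces some arm $k\notin\{i,j\}$ with $\bar\parameter_k=c$. Your common level must therefore satisfy $y\ge c$, and the cost change is
\[
n_i(y-\hat\mean_i)^2+n_j(y-\hat\mean_j)^2-n_i(c-\hat\mean_i)^2\;\ge\;n_j(c-\hat\mean_j)^2 .
\]
For $n_j\ge1$ this cannot be bounded by $D^2/(1/n_j-1/n_i)$. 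A third arm with very large $n_k$ and $\hat\mean_k$ pushes $c$ arbitrarily high while $D,n_i,n_j$ stay fixed, so this construction cannot certify the claim. The reason is that putting $\parameter_i'=y\ge c$ gives up the gain $n_i(c-\hat\mean_i)^2$ you get from releasing arm $i$.

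\textbf{The missing idea} is the asymmetric choice $(\parameter_i',\parameter_j')=(\hat\mean_i,c)$, with $\parameter_k'=\bar\parameter_k$ for $k\notin\{i,j\}$. This point is in your family $y\ge\max\{x,c\}$, but off the diagonal $x=y$. It lies in $\spaceofparameter_j$ because $\hat\mean_i\le c$ and $\bar\parameter_k\le c$. Its cost exceeds $\Lambda(i,\dataset_{t-1})$ by exactly $\frac{1}{2\sigma^2}\bigl[n_j(c-\hat\mean_j)^2-n_i(c-\hat\mean_i)^2\bigr]$. That is at most $\frac{1}{2\sigma^2}\sup_z\{n_j(z-\hat\mean_j)^2-n_i(z-\hat\mean_i)^2\}=\frac{1}{2\sigma^2}D^2/(1/n_j-1/n_i)$, the supremum you already computed. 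This is the paper's argument.

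It differs from your naive swap only in placing $\parameter_i'$ at $\hat\mean_i$ rather than $\hat\mean_j$, which removes the excess $n_iD^2$. No case split on $c$ is needed. Feasibility uses only $\bar\parameter_i\ge\hat\mean_i$ and $\bar\parameter_j=\hat\mean_j$; the paper proves these by a one-line perturbation, and your water-level form gives them directly.
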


We are now in a position to tackle the summands $\{ \cE_{j, t} \}_{j=1}^5$ in \eqref{eqn-J-1-0}.

\paragraph{Bounding $\cE_{1, t}$.}
Let $\eventA$ be the event $\{  \pullcount_j(t) > M, \hat{\mean}_j(t) < u_j, \pullcount_1(t) > M' ,
\hat{\mean}_1 (t) > v_j \}$. Choose $\hat\action_t \in \argmax_{j \in [K]} \hat{\mean}_j(t)$. We have $\Lambda (
\hat\action_t
, \dataset_{t-1}) = 0$. Then, the relation \eqref{eqn-ratio-nonasymptotic} and the uniformity of $\distQ_0$ yield
\[
\distQ_{t-1}(j) 
= e^{ \Lambda (
	\hat\action_t
	, \dataset_{t-1}) - \Lambda (j, \dataset_{t-1})}
\distQ_{t-1}(\hat{\action}_t) 
\leq e^{ -\Lambda (j, \dataset_{t-1})}.
\]
Under $\eventA$, Part \ref{lem-ratio-1} of \Cref{lem-ratio-nonasymptotic} implies that
\begin{align*}
\distQ_{t-1}(j) 
& 
 \leq \exp\bigg(
- \frac{1}{2 \sigma^2}
\cdot
\frac{ (v_j - u_j)^2 }{  1/ M + 1/ M' } 
\bigg)
= \exp\bigg(
- \frac{M'
\Delta_j^2 
}{ 36 \sigma^2}
\bigg).
\end{align*}
By \eqref{eqn-conditioning-1},
\begin{align}
\cE_{1, t} 
= 
 \EE [
\distQ_{t-1}(j)  \cdot \one (\eventA)
]
\leq \exp\bigg(
- \frac{M'
	\Delta_j^2 
}{ 36 \sigma^2}
\bigg).
\label{eqn-E-1}
\end{align}

\paragraph{Bounding $\cE_{2, t}$.} 
We have
\begin{align}
	\cE_{2, t} & \leq \PP [ \hat{\mean}_1 (t) \leq v_j , \pullcount_1(t) > M' ] .
\label{eqn-E-2-t-0}
\end{align}
For large $M'$, the probability should be small because if Arm 1 is pulled many times, then its empirical average reward should be close to the population mean. We will study this via concentration of self-normalized martingale \citep{APS11}.
For any $t \in \ZZ_+$, denote by $\reward_{t, 1}$ the potential reward if Arm 1 is pulled in the $t$-th round, and $\bar\history_{t-1}$ the $\sigma$-field generated by $\dataset_{t-1}$ and $\action_{t}$. Define $\eta_0 = 0$ and $\eta_t = \reward_{t, 1} - \mean_1$ for $t \geq 1$. Then, $\eta_t$ is $\bar\history_t$-measurable, $\EE (  \eta_t | \bar\history_{t-1} ) = 0$, and Assumption \ref{assumption-subg} implies that $\EE ( e^{\lambda \eta_t} | \bar\history_{t-1} ) \leq e^{\lambda^2 / 2} $, $ \forall \lambda \in \RR$. Define
\begin{align*}
& V_t = M' + \sum_{i=0}^{t-1} \one ( \action_i = 1 ) = M' + N_1(t) , \\
& S_t = \sum_{i=0}^{t-1} \eta_i \one ( \action_i = 1 ) = N_1(t) [\hat{\mean}_1 (t) - \mean_1].
\end{align*}
Choose any $\delta > 0$. Theorem 1 in \cite{APS11} implies that
\begin{align*}
\PP \bigg(
|S_t|^2 / V_t \leq 2 \log ( \delta^{-1}
 \sqrt{
 V_t / M'
}
)
,~~\forall t \geq 1
\bigg) \geq 1 - \delta.
\end{align*}
When the above event happens, for all $t \geq 1$ we have
\begin{align*}
\frac{
\{ N_1(t) [\hat{\mean}_1 (t) - \mean_1] \}^2
}{
M' + N_1(t)
}
\leq 2 \log(1 / \delta) + \log \bigg(
\frac{M' + N_1(t)}{M'}
\bigg)
\leq 2 \log(1 / \delta) + \frac{N_1(t)}{M'},
\end{align*}
where the last inequality follows from the elementary fact $\log (1+z) \leq z$, $\forall z \geq 0$. Therefore, we have
\begin{align*}
\PP \bigg(
	\frac{
	 N_1(t) 
	}{
		M' + N_1(t)
	}[\hat{\mean}_1 (t) - \mean_1]^2
	\leq \frac{ 2 \log(1 / \delta) }{N_1(t)} + \frac{1}{M'}, ~~ \forall t \geq 1
\bigg)
\geq 1 - \delta.
\end{align*}
Denote by $\eventA_{\delta}$ the above event.

We now come back to \eqref{eqn-E-2-t-0}. On the event $\{ \hat{\mean}_1 (t) \leq v_j , \pullcount_1(t) > M' \}$, we have
\begin{align*}
	\frac{
	N_1(t) 
}{
	M' + N_1(t)
}[\hat{\mean}_1 (t) - \mean_1]^2 > \frac{\Delta_j^2}{18}
\qquad\text{and}\qquad
\frac{ 2 \log(1 / \delta) }{N_1(t)} + \frac{1}{M'}
< \frac{2 \log (1 / \delta) + 1}{M'}.
\end{align*}
Take $\delta = e^{1/2 - M' \Delta_j^2 / 36}$. We have $[ 2 \log (1 / \delta) + 1 ] / M' = \Delta_j^2 / 18$ and thus
\[
\PP [  \hat{\mean}_1 (t) \leq v_j  , \pullcount_1(t) > M' ]
\leq 
\PP \bigg(
	\frac{
	N_1(t) 
}{
	M' + N_1(t)
}[\hat{\mean}_1 (t) - \mean_1]^2 
> 
\frac{ 2 \log(1 / \delta) }{N_1(t)} + \frac{1}{M'}
\bigg)
\leq \PP ( \eventA_{\delta}^c ) \leq \delta.
\]
Then, \eqref{eqn-E-2-t-0} leads to
\begin{align}
\cE_{2, t} \leq e^{1/2 - M' \Delta_j^2 / 36}.
	\label{eqn-E-2}
\end{align}

\paragraph{Bounding $\cE_{3, t}$.} 
Let $\eventA$ be the event $\{ \pullcount_j(t) > M, \hat{\mean}_j(t) < u_j, 
1 \leq \pullcount_1(t) < M', \hat{\mean}_1 (t) > \hat{\mean}_j (t) \}$. Under $\eventA$, we apply the relation \eqref{eqn-ratio-nonasymptotic} and Part \ref{lem-ratio-2} of \Cref{lem-ratio-nonasymptotic} to Arms $1$ and $j$ (as the $i$ and $j$ therein) and then obtain $  \distQ_{t-1} (j)  \leq \distQ_{t-1} (1) $. Therefore, by \eqref{eqn-conditioning-1},
\begin{align}
& \cE_{3, t}
\leq  \EE [  \distQ_{t-1} (1)  \cdot \one (\eventA) ] 
\leq 	\EE \Big(
\one [ \action_t = 1, 1 \leq \pullcount_1(t) < M'  ] 
\Big) ,
\notag \\
& \sum_{t=1}^T
\cE_{3, t}
	\leq 	\EE \bigg(
\sum_{t=1}^T	\one [ \action_t = 1, 1 \leq \pullcount_1(t) < M'  ] 
	\bigg) 
\leq \lceil M' \rceil - 1.
\label{eqn-E-3}
\end{align}

\paragraph{Bounding $\cE_{4, t}$.} Let $\eventA$ be the event $\{ \pullcount_j(t) > M, \hat{\mean}_j(t) < u_j, 
1 \leq \pullcount_1(t) < M', \hat{\mean}_1 (t) \leq \hat{\mean}_j (t)  \}$. By \eqref{eqn-conditioning-1},
\[
\cE_{4, t}
=
 \EE 
\bigg(
\frac{\distQ_{t-1}(j)}{ \distQ_{t-1} (1) } 
 \distQ_{t-1} (1)  \cdot \one (\eventA) 
\bigg)
=
\EE \bigg(
\frac{\distQ_{t-1}(j)}{ \distQ_{t-1} (1) } 
\cdot \one ( \{ \action_t = 1  \} \cap \eventA) 
\bigg) .
\]
Under $\eventA$, we can apply the relation \eqref{eqn-ratio-nonasymptotic} and Part \ref{lem-ratio-3} of \Cref{lem-ratio-nonasymptotic} to Arms $j$ and $1$ (as the $i$ and $j$ therein). This yields
\begin{align*}
\frac{\distQ_{t-1}(j)}{ \distQ_{t-1} (1) } 
& \leq 
\exp\bigg(
 \frac{1}{2 \sigma^2}
\cdot
\frac{
	[ \hat\mean_j (t) - \hat\mean_1(t) ]^2
}{
	1/\pullcount_1 (t) - 1/ \pullcount_j (t)
}
\bigg)
 \leq 
\exp\bigg(
\frac{1}{2 \sigma^2}
\cdot
\frac{
[ \mu_1 - \hat\mean_1(t) ]^2
}{
	1/\pullcount_1(t) - 1/ \pullcount_j (t) 
}
\bigg) .
\end{align*}
Since $1 \leq \pullcount_1(t) < (1-c) M < M < \pullcount_j(t)$, we have $\pullcount_j(t) > \pullcount_1(t) / (1-c)$ and
\[
\frac{1}{\pullcount_1(t)} - \frac{1}{\pullcount_j(t)} \geq 
\frac{1}{\pullcount_1(t)} - \frac{1}{\pullcount_1(t) / (1-c)} =  \frac{c}{\pullcount_1(t)}.
\]
Hence,
\begin{align*}
&
\frac{ \distQ_{t-1} (j) }
{ 
	\distQ_{t-1} (1)
}
	\leq 
	\exp\bigg(
	\frac{ \pullcount_1(t)
		[ \mu_1 - \hat\mean_1(t) ]^2
	}{ 2 c \sigma^2}
	\bigg) .
\end{align*}
We have
\begin{align*}
\sum_{t=1}^{T}
\cE_{4, t} 
& \leq
\EE 
\bigg[
\sum_{t=1}^{T}
\exp\bigg(
\frac{ \pullcount_1(t)
	[  \hat\mean_1(t) - \mu_1 ]^2
}{2 c \sigma^2}
\bigg)
\one [ \action_t = 1, 1 \leq \pullcount_1(t) < M'  ] 
\bigg] \\
& \leq \EE 
\bigg[
\sum_{k=2}^{ \lceil M' \rceil - 1}
\exp\bigg(
\frac{ (k-1)
	[  \hat\mean_1( \tau_{1,k} ) - \mu_1 ]^2
}{2 c \sigma^2}
\bigg)
\bigg]
 = \sum_{s = 1}^{ \lceil M' \rceil - 2} \EE 
\bigg[
\exp\bigg(
\frac{ s
( \xi_{1,s}  - \mu_1 )^2
}{2 c \sigma^2}
\bigg)
\bigg] ,
\end{align*}
where $\tau_{1,k}$ is the time of the $k$-th pull of Arm $1$.

When $c \sigma^2 > 1$, we obtain from \Cref{lem-tail} that
\[
 \EE 
\bigg[
\exp\bigg(
\frac{ s
	( \xi_{1,s}  - \mu_1 )^2
}{2 c \sigma^2}
\bigg)
\bigg]
\leq \frac{1}{ \sqrt{ 1 - 1 / (c \sigma^2) } } , \qquad \forall s \in \ZZ_+.
\]
Therefore,
\begin{align}
	\sum_{t=1}^{T}
	\cE_{4, t} 
	& \leq
\frac{  \lceil M' \rceil - 2 }{ \sqrt{ 1 - 1 / (c \sigma^2) } } .
	\label{eqn-E-4}
\end{align}

\paragraph{Bounding $\cE_{5, t}$.} If $\pullcount_1(t) = 0 $, then $\Lambda ( 1 , \dataset_{t-1}) = 0$. The relation \eqref{eqn-ratio-nonasymptotic} yields $ \distQ_{t-1} (j) \leq \distQ_{t-1} (1)$. Then, by \eqref{eqn-conditioning-1},
\begin{align*}
\cE_{5, t} 
& \leq \PP [ \action_t = j,  \pullcount_1(t) = 0 ] 
= \EE [ \distQ_{t-1} (j) \cdot \one ( \pullcount_1(t) = 0 ) ] 
 \\&
 \leq \EE [ \distQ_{t-1} (1) \cdot \one ( \pullcount_1(t) = 0 ) ] 
 = \EE \Big(
\one [ \action_t = 1 ,  \pullcount_1(t) = 0 ] 
\Big).
\end{align*}
We have
\begin{align}
\sum_{t=1}^{T} \cE_{5, t}
\leq 
 \EE \bigg(
\sum_{t=1}^{T}
\one [ \action_t = 1 ,  \pullcount_1(t) = 0 ] 
\bigg)
\leq 1.
\label{eqn-E-5}
\end{align}

\paragraph{Bounding $\cJ_{3}$.} 
Below we use $\lesssim$ to hide universal constant factors. Summarizing \eqref{eqn-E-1}, \eqref{eqn-E-2}, \eqref{eqn-E-3}, \eqref{eqn-E-4} and \eqref{eqn-E-5}, we get
\begin{align*}
\cJ_3
& \leq 
T  \exp\bigg(
- \frac{M'
	\Delta_j^2 
}{ 36 \sigma^2}
\bigg)
+ 
T e^{1/2 - M' \Delta_j^2 / 36}
+ (
\lceil M' \rceil - 1
)
+ 
\frac{  \lceil M' \rceil - 2 }{ \sqrt{ 1 - 1 / (c \sigma^2) } } 
+ 1 \notag \\
&\lesssim
T   \exp\bigg(
- \frac{M'
	\Delta_j^2 
}{ 36 \sigma^2}
\bigg)
+ \frac{ (M' + 1) }{
\sqrt{ 1 - 1 / (c \sigma^2) } 
} 
=
T \exp\bigg(
- \frac{ (1-c) M
	\Delta_j^2 
}{ 36 \sigma^2}
\bigg)
+ \frac{  [ (1-c) M + 1] }{
	\sqrt{ 1 - 1 / (c \sigma^2) } 
} ,
\end{align*}
so long as $1 / \sigma^2 < c < 1$. Let $c = (1 + \sigma^{-2}) / 2$. We have $1 - c = (1 - \sigma^{-2})/2$ and $1 - 1/ c \sigma^2 = (\sigma^2 - 1) / (\sigma^2 + 1) \geq (1 - \sigma^{-2})/2$. Hence,
\begin{align}
	\cJ_3
	&  \lesssim
T
\exp\bigg(
- \frac{ (1-\sigma^{-2}) M
	\Delta_j^2 
}{ 72 \sigma^2}
\bigg)
+ \frac{
1
}{
\sqrt{ 1 - \sigma^{-2} }
}
\bigg(
\frac{1 - \sigma^{-2}}{2} M + 1
\bigg) \notag\\
&  \lesssim
T
  \exp\bigg(
- \frac{ (1-\sigma^{-2}) M
	\Delta_j^2 
}{ 72 \sigma^2}
\bigg)
+  M +
\frac{
1
}{
	\sqrt{ 1 - \sigma^{-2} }
}
.
	\label{eqn-J-1}
\end{align}

\subsubsection{Final steps}

Combining \eqref{eqn-J-0}, \eqref{eqn-J-3}, \eqref{eqn-J-2} and \eqref{eqn-J-1}, we get
\begin{align*}
 \sum_{t=1}^{T}  \PP ( \action_t = j ) 
& \lesssim
\bigg[
T  \exp\bigg(
- \frac{ (1-\sigma^{-2}) M
	\Delta_j^2 
}{ 72 \sigma^2}
\bigg)
+ M +
\frac{
1
}{
	\sqrt{ 1 - \sigma^{-2} }
}
\bigg]
+ \bigg(
 \frac{1}{\Delta_j^2} + 1
\bigg)
+ (M+1) \\
& \lesssim
T 
 \exp\bigg(
- \frac{ (1-\sigma^{-2}) M
	\Delta_j^2 
}{ 72 \sigma^2}
\bigg)
+  \frac{1}{\Delta_j^2} + M  + \frac{1}{ \sqrt{1 - \sigma^{-2}} } 
, \qquad \forall M > 0.
\end{align*}
By taking $M = \frac{72 \sigma^2}{1 - \sigma^{-2}} \cdot  \frac{ \log ( \max\{ T \Delta_j^2 , e \} }{ \Delta_j^2 } $, we get
\begin{align*}
\sum_{t=1}^{T}  \PP ( \action_t = j ) 
& \lesssim
T e^{ - \log (  \max\{ T \Delta_j^2 , e \} ) }
+ \frac{1}{\Delta_j^2}
	+ 
 \frac{ \sigma^2}{1 - \sigma^{-2}} \cdot  \frac{ \log (  \max\{ T \Delta_j^2 , e \} ) }{ \Delta_j^2 } 
  + \frac{1}{ \sqrt{1 - \sigma^{-2}} }
 \\
& =   \frac{ \sigma^2}{1 - \sigma^{-2}} \cdot  \frac{ \log ( \max\{ T \Delta_j^2 , e \} ) }{ \Delta_j^2 } 
+ \frac{1}{ \sqrt{1 - \sigma^{-2}} }
.
\end{align*}

\subsection{Proof of \Cref{lem-ratio-nonasymptotic}}\label{sec-lem-ratio-proof}

\subsubsection{Part \ref{lem-ratio-1}}

For notational simplicity, we will suppress the time index $t$ in $\pullcount_k(t)$'s and $\hat{\mean}_k(t)$'s. 
The result is trivial when $\hat\mean_i = \hat\mean_j$. Below we assume that $\hat\mean_i > \hat\mean_j$. 
By \eqref{eqn-Lambda-nonasymptotic}, we have
\begin{align}
	&	2 \sigma^2 \Lambda (j, \dataset_{t-1})
	= \min_{\btheta \in \spaceofparameter_j } 
	\bigg\{
	\sum_{k=1}^{K} 
	\pullcount_k   (  \hat\mean_k  - \theta_k )^2 
	\bigg\} \notag\\
	& \geq  \min_{\btheta \in \spaceofparameter_j } 
	\bigg\{
	\pullcount_j   (  \hat\mean_j   - \theta_j )^2 
	+ 
	\pullcount_{i}   ( \hat\mean_{i}  - \theta_{i} )^2 
	\bigg\} 
	=   \min_{ \theta_j \geq \theta_{i} } 
	\bigg\{
	\pullcount_j   ( \hat\mean_j   - \theta_j )^2 
	+ 
	\pullcount_{i}   ( \hat\mean_{i}  - \theta_{i} )^2 
	\bigg\}.
\label{eqn-proof-ratio-1}
\end{align}
Denote by $h(\theta_j, \theta_i)$ the function in the brackets. The assumption $\hat{\mean}_i  > \hat{\mean}_j $ implies that for any $\theta_j$,
\[
\min_{ \theta_i \leq \theta_j }  
h(\theta_j, \theta_i)
= h \Big( \theta_j, \min \{ \hat{\mean}_i  , \theta_j \} \Big)
= \pullcount_j   (  \hat\mean_j   - \theta_j )^2 
+ 
\pullcount_{i}   ( \hat\mean_{i}  - \theta_{j} )_+^2 
.
\]
View the above as a function of $ \theta_{j} $. It is strictly increasing on $( \hat\mean_{i}  , +\infty)$. On the complement set $(-\infty, \hat\mean_{i}  ]$, the expression simplies to $\pullcount_j   ( \hat\mean_j   - \theta_j )^2 
+ 
\pullcount_{i}   (\hat\mean_{i}  - \theta_{j} )^2 $. This function's minimizer and minimum value are $( \pullcount_j    \hat\mean_j    + \pullcount_{i}    \hat\mean_{i}   ) / (  \pullcount_j    + \pullcount_{i}  )$ and $(  \hat\mean_{i}   - \hat\mean_j   )^2 / (  1/ \pullcount_{i}   + 1/ \pullcount_j   )$.
This fact and \eqref{eqn-proof-ratio-1} lead to the desired inequality. 

\subsubsection{Part \ref{lem-ratio-2}}

Choose any $ \bar\btheta \in 
\argmin_{\btheta \in \spaceofparameter_j } \ell (\btheta, \dataset_{t-1})$.

\paragraph{Case 1: $\bar\theta_j \geq \hat\mean_i  $.}
It is easily seen that $\bar\theta_i =  \hat\mean_i$. Define $\bm{\eta} \in \RR^K$ with $\eta_j = \hat\mean_j $, $\eta_i = \bar\theta_j $, and $\eta_k = 	\bar\theta_k $ for $k \notin \{ i, j \}$. We have $\bm{\eta} \in \spaceofparameter_i$ and
\begin{align*}
	& \Lambda ( i, \dataset_{t-1}) - \Lambda (j, \dataset_{t-1}) 
	= \min_{\btheta \in \spaceofparameter_i } 
	\ell (\btheta, \dataset_{t-1})
	-
	\min_{\btheta \in \spaceofparameter_j } 
	\ell (\btheta, \dataset_{t-1})
	\leq 
	\ell ( \bm\eta, \dataset_{t-1})
	- \ell (\bar\btheta, \dataset_{t-1})
	\\
	& = 
	\frac{1}{2\sigma^2}
	\bigg(
	\pullcount_i   (\hat\mean_i  - \eta_i )^2 
	+
	\pullcount_j   (\hat\mean_j  - \eta_j )^2 
	\bigg)
	-
	\frac{1}{2\sigma^2} 
	\bigg(
	\pullcount_i   (\hat\mean_i  - \bar\theta_i )^2 
	+
	\pullcount_j   ( \hat\mean_j  - \bar\theta_j )^2 
	\bigg)
	\\
	& = 
	\frac{1}{2\sigma^2}
	\bigg(
	\pullcount_i   (\hat\mean_i  - \bar\theta_j )^2  - \pullcount_j   ( \hat\mean_j  - \bar\theta_j )^2 
	\bigg) \leq 0.
\end{align*}
The last inequality follows from $\bar\theta_j \geq \hat\mean_i \geq \hat\mean_j $ and $\pullcount_j \geq \pullcount_i$.

\paragraph{Case 2: $\bar\theta_j < \hat\mean_i  $.}
It is easily seen that $\hat\mean_j \leq \bar\theta_j = \bar\theta_i$. Define $\bm{\eta} \in \RR^K$ with $\eta_j = \hat\mean_j $, $\eta_i = \hat\mean_i$, and $\eta_k = 	\bar\theta_k $ for $k \notin \{ i, j \}$. We have $\bm{\eta} \in \spaceofparameter_i$ and
\begin{align*}
	& \Lambda ( i, \dataset_{t-1}) - \Lambda (j, \dataset_{t-1}) 
	= \min_{\btheta \in \spaceofparameter_i } 
	\ell (\btheta, \dataset_{t-1})
	-
	\min_{\btheta \in \spaceofparameter_j } 
	\ell (\btheta, \dataset_{t-1})
	\leq 
	\ell ( \bm\eta, \dataset_{t-1})
	- \ell (\bar\btheta, \dataset_{t-1})
	\\
	& = 
	\frac{1}{2\sigma^2}
	\bigg(
	\pullcount_i   (\hat\mean_i  - \eta_i )^2 
	+
	\pullcount_j   (\hat\mean_j  - \eta_j )^2 
	\bigg)
	-
	\frac{1}{2\sigma^2} 
	\bigg(
	\pullcount_i   (\hat\mean_i  - \bar\theta_i )^2 
	+
	\pullcount_j   ( \hat\mean_j  - \bar\theta_j )^2 
	\bigg)
	\\
	& = 
	0 - \frac{1}{2\sigma^2}
	\bigg(
	\pullcount_i   (\hat\mean_i  - \bar\theta_j )^2  + \pullcount_j   ( \hat\mean_j  - \bar\theta_j )^2 
	\bigg) \leq 0.
\end{align*}

\subsubsection{Part \ref{lem-ratio-3}}

Choose any $\bar\btheta \in 
\argmin_{\btheta \in \spaceofparameter_i } \ell (\btheta, \dataset_{t-1})$. We claim that $\bar\theta_i \geq \hat{\mean}_i   \geq \hat{\mean}_j   = \bar\theta_j $. To show that, define $\bm{\eta} \in \RR^K$ with $\eta_j = \hat\mean_j $, $\eta_i = \max\{ \bar\theta_i ,	\hat{\mean}_i  \}$, and $\eta_k = 	\bar\theta_k $ for $k \notin \{ i, j \}$.
We have $\bm{\eta} \in \spaceofparameter_i$ and
\begin{align*}
0 \geq 2 \sigma^2 [ \ell (\bar\btheta, \dataset_{t-1}) - \ell (\bm{\eta} , \dataset_{t-1}) ]
& = 
[\pullcount_i   (\hat\mean_i  - \bar\theta_i )^2 
+
\pullcount_j   ( \hat\mean_j  - \bar\theta_j )^2 ]
-
[\pullcount_i   (\hat\mean_i  - \eta_i )^2 
+
\pullcount_j   (\hat\mean_j  - \eta_j )^2 ]
 \\
& = \pullcount_i   (\hat\mean_i  - \bar\theta_i )_-^2 
+ \pullcount_j   ( \hat\mean_j  - \bar\theta_j )^2 
\end{align*}
The inequality forces $\bar\theta_i \geq \hat\mean_i $ and $ \bar\theta_j  = \hat\mean_j $.

We now come back to the main proof. Define $\bm{\eta} \in \RR^K$ with $\eta_j = \bar\theta_i $, $\eta_i = \hat{\mean}_i$, and $\eta_k = 	\bar\theta_k $ for $k \notin \{ i, j \}$.
We have $\bm{\eta} \in \spaceofparameter_j$ and
\begin{align*}
	& \Lambda ( i, \dataset_{t-1}) - \Lambda (j, \dataset_{t-1}) 
	= \min_{\btheta \in \spaceofparameter_i } 
	\ell (\btheta, \dataset_{t-1})
	-
	\min_{\btheta \in \spaceofparameter_j } 
	\ell (\btheta, \dataset_{t-1})
	\geq 
	\ell ( \bar\btheta, \dataset_{t-1})
	- \ell (\bm{\eta}, \dataset_{t-1})
	\\
	& = 
	\frac{1}{2\sigma^2}
	\bigg(
	\pullcount_i   (\hat\mean_i  - \bar\theta_i )^2 
	+
	\pullcount_j   ( \hat\mean_j  - \bar\theta_j )^2 
	\bigg)
	-
	\frac{1}{2\sigma^2} \bigg(
	\pullcount_i   (\hat\mean_i  - \eta_i )^2 
	+
	\pullcount_j   (\hat\mean_j  - \eta_j )^2 
	\bigg)
	\\
	& = 
	\frac{1}{2\sigma^2}
	\bigg(
	\pullcount_i   (\hat\mean_i  - \bar\theta_i )^2  - \pullcount_j   ( \hat\mean_j  - \bar\theta_i )^2 
	\bigg) 
	\geq \frac{1}{2 \sigma^2} \inf_{z \geq \hat\mean_i } \bigg\{
	\pullcount_i   (\hat\mean_i  - z )^2  - \pullcount_j   (\hat\mean_j  - z )^2 
	\bigg\} \\
	& =  \frac{1}{2\sigma^2} \inf_{z \geq 0} 
	\bigg\{
	\pullcount_i    z^2  - \pullcount_j   [ z + (\hat\mean_i  - \hat\mean_j ) ]^2 
	\bigg\}.
\end{align*}
Denote by $g(z)$ the function in the bracket. From $g'(z) / 2 = ( \pullcount_i  - \pullcount_j  ) z - \pullcount_j  ( \hat\mean_i  - \hat\mean_j  ) $ and $\pullcount_i  > \pullcount_j  $, we derive that
\[
\inf_{z \geq 0} g(z) 
= g \bigg(
\frac{\pullcount_j (  \hat\mean_i  - \hat\mean_j  ) }{\pullcount_i - \pullcount_j}
\bigg)
= - \frac{\pullcount_i \pullcount_j }{\pullcount_i - \pullcount_j} (  \hat\mean_i  - \hat\mean_j  )^2
= - \frac{
	( \hat\mean_i  - \hat\mean_j  )^2
}{
	1/\pullcount_j - 1/ \pullcount_i
}.
\]

\section{Proof of \Cref{thm-asymptotic}}\label{sec-thm-asymptotic-proof}

We will follow the outline in \Cref{sec-thm-asymptotic-proof-sketch} to prove \eqref{eqn-pathwise-bound} for any $j \neq \opt$. The next two subsections present a few useful lemmas and the final proof.

\subsection{Useful lemmas}

\begin{lemma}\label{lem-sandwich}
	Suppose that $\vectorofmeans\in\spaceofparameter$. Define
	\[
	r_t=\sum_{i=1}^K \pullcount_i(t)\bigl[\hat\mean_i(t)-\mean_i\bigr]^2,\qquad
	\bar\Lambda_t(j)=\inf_{\parametervector\in\spaceofparameter_j}\bigg\{\frac12\sum_{i=1}^K \pullcount_i(t)(\parameter_i-\mean_i)^2\bigg\}.
	\]
	Choose any $\opt\in\argmax_{j\in[K]}\mean_j$. Then for any $\varepsilon>0$ and $j\in[K]$,
	\[
	(1-\varepsilon)\bar\Lambda_t(j)-2\varepsilon^{-1}r_t
	\le
	\Lambda(j,\dataset_{t-1})-\Lambda(\opt,\dataset_{t-1})
	\le
	(1+\varepsilon)\bar\Lambda_t(j)+2\varepsilon^{-1}r_t.
	\]
\end{lemma}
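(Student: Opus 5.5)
The plan is to compare the two quadratic forms in \eqref{eqn-Lambda} and in $\bar\Lambda_t(j)$ pointwise in $\parametervector$, and then pass to the infimum over $\spaceofparameter_j$. Fix $t$ and write $\|\bx\|_N^2=\sum_{i=1}^K \pullcount_i(t)x_i^2$, a weighted seminorm. With $\sigma=1$, \eqref{eqn-Lambda} gives
\[
\Lambda(j,\dataset_{t-1})=\tfrac12\inf_{\parametervector\in\spaceofparameter_j}\|\hat{\bm\mu}(t)-\parametervector\|_N^2,\qquad
\bar\Lambda_t(j)=\tfrac12\inf_{\parametervector\in\spaceofparameter_j}\|\vectorofmeans-\parametervector\|_N^2,\qquad
r_t=\|\hat{\bm\mu}(t)-\vectorofmeans\|_N^2 .
\]
Note that the indexing matches: $\Lambda(\cdot,\dataset_{t-1})$ uses $\pullcount_i(t)$ and $\hat\mean_i(t)$. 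Arms with $\pullcount_i(t)=0$ carry zero weight, so the convention $\hat\mean_i=\mean_i$ causes no trouble.

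The key tool is the elementary inequality, applied coordinatewise and summed with the weights $\pullcount_i(t)$,
\[
(1-\varepsilon)a^2-\varepsilon^{-1}b^2\le (a+b)^2\le (1+\varepsilon)a^2+(1+\varepsilon^{-1})b^2 ,
\]
with $a=\mean_i-\parameter_i$ and $b=\hat\mean_i-\mean_i$. This gives, for every $\parametervector$,
\[
(1-\varepsilon)\|\vectorofmeans-\parametervector\|_N^2-\varepsilon^{-1}r_t\le\|\hat{\bm\mu}-\parametervector\|_N^2\le(1+\varepsilon)\|\vectorofmeans-\parametervector\|_N^2+(1+\varepsilon^{-1})r_t .
\]
Halving and taking the infimum over $\spaceofparameter_j$ preserves both sides, since the terms involving $r_t$ do not depend on $\parametervector$. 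Hence
\[
(1-\varepsilon)\bar\Lambda_t(j)-\tfrac{1}{2\varepsilon}r_t\le\Lambda(j,\dataset_{t-1})\le(1+\varepsilon)\bar\Lambda_t(j)+\tfrac{1+\varepsilon^{-1}}{2}r_t .
\]

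Next I handle the optimal arm. Since $\vectorofmeans\in\spaceofparameter$ and $\opt$ maximizes $\mean$, we have $\vectorofmeans\in\spaceofparameter_{\opt}$. Thus $\bar\Lambda_t(\opt)=0$, and plugging $\parametervector=\vectorofmeans$ into \eqref{eqn-Lambda} gives $0\le\Lambda(\opt,\dataset_{t-1})\le r_t/2$.

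Subtracting the two estimates yields the bounds on the difference. For the lower bound, $\Lambda(j)-\Lambda(\opt)\ge(1-\varepsilon)\bar\Lambda_t(j)-(\tfrac1{2\varepsilon}+\tfrac12)r_t$. For the upper bound, $\Lambda(j)-\Lambda(\opt)\le(1+\varepsilon)\bar\Lambda_t(j)+\tfrac{1+\varepsilon^{-1}}2 r_t$, using $\Lambda(\opt)\ge0$. Both $r_t$ coefficients are at most $2\varepsilon^{-1}$ exactly when $\varepsilon\le3$.

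The only delicate point is the bookkeeping of constants. The stated factor $2\varepsilon^{-1}$ absorbs $\tfrac12(1+\varepsilon^{-1})$ only for $\varepsilon\le 3$, which is the regime in which the lemma is used, namely small $\varepsilon$. For larger $\varepsilon$ the lower bound still follows trivially from $\bar\Lambda_t(j)\ge0$ and $\Lambda(\opt)\le r_t/2$ when $\varepsilon\le4$. In general I would state the result for $\varepsilon\in(0,1]$, or else track the constant $1+\varepsilon^{-1}$ explicitly. No probabilistic input is needed: the lemma is deterministic, and all randomness enters later through $r_t$, which is controlled via the LIL.
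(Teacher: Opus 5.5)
Your proof is correct and follows the paper's route: a pointwise weighted Young inequality, infima over $\spaceofparameter_j$, and $\bar\Lambda_t(\opt)=0$. The one difference is that you bound $\Lambda(\opt,\dataset_{t-1})\in[0,r_t/2]$ directly, whereas the paper reuses its sandwich with coefficients $\varepsilon^{-1}\mp1$ (which is how it lands exactly on $2\varepsilon^{-1}$), so your constants are actually sharper for $\varepsilon<3$.

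Your caution about the range of $\varepsilon$ is warranted. The paper's infimum step, like your own lower-side infimum step, silently needs $\varepsilon\le1$, and every application in the paper uses $\varepsilon\in(0,1)$. The statement also genuinely fails for large $\varepsilon$: take $K=2$, $\spaceofparameter=\RR^2$, $\pullcount_1(t)=\pullcount_2(t)=1$, $\vectorofmeans=(0,-\delta)$, $(\hat\mean_1(t),\hat\mean_2(t))=(0,1)$ and $\delta\downarrow0$. Then the difference is $-1/4$ while the claimed lower bound tends to $-2/\varepsilon$, so it breaks once $\varepsilon>8$.
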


\noindent{\bf Proof. }
Let $\ell_t(\parametervector)=\sum_{i=1}^K \pullcount_i(t)[\parameter_i-\hat\mean_i(t)]^2$ and $\bar\ell_t(\parametervector)=\sum_{i=1}^K \pullcount_i(t)(\parameter_i-\mean_i)^2$. Expanding the square and using $2|ab|\le \varepsilon a^2+\varepsilon^{-1}b^2$ gives, for all $\parametervector\in\RR^K$,
\[
(1-\varepsilon)\bar\ell_t(\parametervector)-(\varepsilon^{-1}-1)r_t
\le
\ell_t(\parametervector)
\le
(1+\varepsilon)\bar\ell_t(\parametervector)+(\varepsilon^{-1}+1)r_t.
\]
Taking infima over $\spaceofparameter_j$ yields
\[
(1-\varepsilon)\bar\Lambda_t(j)-(\varepsilon^{-1}-1)r_t
\le
\Lambda(j,\dataset_{t-1})
\le
(1+\varepsilon)\bar\Lambda_t(j)+(\varepsilon^{-1}+1)r_t.
\]
Since $\vectorofmeans\in\spaceofparameter_{\opt}$, we have $\bar\Lambda_t(\opt)=0$, so
\[
-(\varepsilon^{-1}-1)r_t
\le
\Lambda(\opt,\dataset_{t-1})
\le
(\varepsilon^{-1}+1)r_t.
\]
Subtracting gives the claim.
\hfill$\square$

\begin{lemma}[Iterated logarithm bounds]\label{lem-lil}
	Let Assumption \ref{assumption-finite-variance} hold and choose any $C>2$. Then, for every $j\in[K]$, with probability $1$ there exists $n_j<\infty$ such that
	\begin{align}
		k(\xi_{j,k}-\mean_j)^2\le C\sigma_j^2\log\log k,\qquad \forall k\ge n_j.
		\label{lem-lil-1}
	\end{align}
	Moreover, with probability $1$, there exists $T_{\mathrm L}<\infty$ such that
	\[
	\sum_{j=1}^K \pullcount_j(t)\bigl[\hat\mean_j(t)-\mean_j\bigr]^2
	\le
	C\sum_{j=1}^K \sigma_j^2\log\log t,\qquad \forall t\ge T_{\mathrm L}.
	\]
\end{lemma}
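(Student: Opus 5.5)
The plan is to reduce the first claim to the classical Hartman--Wintner law of the iterated logarithm, and then to transfer it from the per-arm sample index $k$ to calendar time $t$ by a monotonicity argument.

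\textbf{Step 1: rewards of a fixed arm are i.i.d.} For a fixed arm $j$, the rewards $\feedback_{\tau_{j,1}},\feedback_{\tau_{j,2}},\dots$ observed at its successive pulls are i.i.d.\ with law $\distP_j$. To see this, realize the bandit through a reward table: for each arm, draw an i.i.d.\ sequence $\{Y_{j,k}\}_{k\ge1}$ in advance, independent across arms and of the algorithm's internal randomization, and let the $k$-th pull of arm $j$ reveal $Y_{j,k}$. This construction produces the same joint law of the interaction sequence, and almost-sure statements about $\xi_{j,k}$ and $\pullcount_j(t)$ depend only on that joint law, so we may work with it. If arm $j$ is pulled only finitely often, $\xi_{j,k}$ is still defined from the table for every $k$, and the bound concerns only the table. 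Hence $\xi_{j,k}=k^{-1}\sum_{i\le k}Y_{j,i}$ is an ordinary sample mean of i.i.d.\ variables with variance $\sigma_j^2<\infty$.

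\textbf{Step 2: the per-arm bound \eqref{lem-lil-1}.} By Hartman--Wintner,
\[
\limsup_{k\to\infty}\frac{k(\xi_{j,k}-\mean_j)^2}{2\sigma_j^2\log\log k}=1 \qquad \text{a.s.}
\]
The case $\sigma_j=0$ is trivial, since then $\xi_{j,k}=\mean_j$ a.s. Because $C>2$, the ratio is eventually at most $C/2$, which gives \eqref{lem-lil-1} with an almost surely finite threshold $n_j$.

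\textbf{Step 3: transfer to calendar time.} Let $\Omega_0$ be the intersection of the $K$ almost-sure events from Step 2, and fix $\omega\in\Omega_0$. For each $j$ the term $\pullcount_j(t)[\hat\mean_j(t)-\mean_j]^2$ equals $k(\xi_{j,k}-\mean_j)^2$ with $k=\pullcount_j(t)$, and it equals $0$ when $k=0$ by the convention in \Cref{defn-pulls}. Split according to the size of $k$:
\begin{itemize}
\item If $k\ge n_j$, the term is at most $C\sigma_j^2\log\log k\le C\sigma_j^2\log\log t$. This uses $k\le t-1<t$ and the monotonicity of $\log\log$, provided $k\ge 3$ so that $\log\log k$ is positive and well defined. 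We can guarantee this by enlarging $n_j$ to at least $3$.
\item If $1\le k<n_j$, the term is bounded by the finite constant $B_j=\max_{k<n_j}k(\xi_{j,k}-\mean_j)^2$.
\end{itemize}
In either case,
\[
\pullcount_j(t)\bigl[\hat\mean_j(t)-\mean_j\bigr]^2 \le \max\bigl\{C\sigma_j^2\log\log t,\;B_j\bigr\}.
\]
This only yields the stated bound if $B_j$ is eventually absorbed, which requires $\sigma_j>0$, because then $C\sigma_j^2\log\log t\to\infty$. When $\sigma_j=0$, we have $\xi_{j,k}=\mean_j$ a.s., so the term vanishes and $B_j=0$. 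Choosing $T_{\mathrm L}$ large enough that $C\sigma_j^2\log\log T_{\mathrm L}\ge B_j$ for every $j$ with $\sigma_j>0$, and summing over $j$, gives the second claim.

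\textbf{Main obstacle.} The only delicate point is justifying Step 1 rigorously, namely that adaptive sampling does not distort the i.i.d.\ structure along each arm's pull sequence. I would handle it with the reward-table coupling described above, which is the standard device for this purpose. The remaining steps are bookkeeping on small $k$ and on degenerate variances.
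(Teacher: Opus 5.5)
Your proof is correct and follows essentially the same route as the paper: you apply Hartman--Wintner to each arm's reward sequence, then transfer to calendar time using $\pullcount_j(t)<t$ and the monotonicity of $\log\log$, and absorb the finitely many small-$k$ values into a constant that $\log\log t\to\infty$ eventually dominates. Your reward-table coupling and your per-arm handling of $\sigma_j=0$ make explicit two points the paper asserts without proof: the i.i.d.\ structure along each arm's pull sequence, and the degenerate-variance case.
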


\noindent{\bf Proof. }
Assume first that $\sigma_j>0$ for all $j$; the case $\sigma_j=0$ is immediate. For each $j$, the sequence $\{\feedback_{\tau_{j,k}}\}_{k\ge1}$ is i.i.d. with mean $\mean_j$ and variance $\sigma_j^2$. By the Hartman--Wintner law of the iterated logarithm \citep{HWi41},
\[
\limsup_{k\to\infty}\frac{\sqrt{k}\,|\xi_{j,k}-\mean_j|}{\sqrt{2\sigma_j^2\log\log k}}=1
\qquad\text{a.s.}
\]
Hence, for any $C>2$, with probability $1$ there exists $n_j<\infty$ such that \eqref{lem-lil-1} holds.

On this event, there exists $n_j'<\infty$ such that $\pullcount_j(t)[\hat\mean_j(t)-\mean_j]^2\le C\sigma_j^2\log\log t$ for all $t\ge n_j'$. Indeed, either $\pullcount_j(s)<n_j$ for all $s$, in which case the left-hand side is eventually bounded by a constant, or else $\pullcount_j(s)\ge n_j$ for some $s$, and then for all $t\ge s$,
\[
\pullcount_j(t)\bigl[\hat\mean_j(t)-\mean_j\bigr]^2
=
\pullcount_j(t)\bigl(\xi_{j,\pullcount_j(t)}-\mean_j\bigr)^2
\le
C\sigma_j^2\log\log \pullcount_j(t)
\le
C\sigma_j^2\log\log t.
\]
Taking $T_{\mathrm L}=\max_{j\in[K]} n_j'$ proves the second claim.
\hfill$\square$

\begin{lemma}\label{lem-ratio}
	Suppose that $\mean_1=\max_{j\in[K]}\mean_j$. Then
	\[
	\bar\Lambda_t(j)\ge \frac{\Delta_j^2}{2}\cdot \frac{\pullcount_1(t)\pullcount_j(t)}{\pullcount_1(t)+\pullcount_j(t)},
	\]
	with the convention $0/0=0$.
\end{lemma}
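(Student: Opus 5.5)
We bound $\bar\Lambda_t(j)$ from below by discarding all coordinates except arms $1$ and $j$. This is the same reduction used in Part~\ref{lem-ratio-1} of \Cref{lem-ratio-nonasymptotic}, now applied to the true means $\mean_k$ in place of the empirical means $\hat\mean_k(t)$.

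Write $n_k=\pullcount_k(t)$. Any $\parametervector\in\spaceofparameter_j$ satisfies $\parameter_j\ge\parameter_1$, and every summand in the objective is nonnegative. Hence
\[
\bar\Lambda_t(j)\;\ge\;\inf_{\parameter_j\ge\parameter_1}\frac12\Bigl\{n_1(\parameter_1-\mean_1)^2+n_j(\parameter_j-\mean_j)^2\Bigr\}.
\]
If $n_1=0$ or $n_j=0$, the right-hand side of the claim is $0$ under the convention $0/0=0$, and the bound is trivial because $\bar\Lambda_t(j)\ge0$. The case $\Delta_j=0$ is likewise trivial.

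Now assume $n_1,n_j\ge1$ and $\mean_1>\mean_j$. Fix $\parameter_j$ and minimize over $\parameter_1\le\parameter_j$. The minimizer is $\parameter_1=\min\{\mean_1,\parameter_j\}$, which leaves the one-variable function
\[
\parameter_j\;\longmapsto\; n_j(\parameter_j-\mean_j)^2+n_1(\mean_1-\parameter_j)_+^2 .
\]
This function is increasing for $\parameter_j\ge\mean_1$. On the region $\parameter_j\le\mean_1$ it is a quadratic, minimized at the weighted average $(n_1\mean_1+n_j\mean_j)/(n_1+n_j)$, which lies in $[\mean_j,\mean_1]$. Its minimum value there is
\[
\frac{(\mean_1-\mean_j)^2}{1/n_1+1/n_j}=\Delta_j^2\,\frac{n_1n_j}{n_1+n_j}.
\]
Multiplying by the factor $\tfrac12$ gives the claimed bound.

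There is no real obstacle. The only points needing care are the boundary cases with zero pull counts, which the $0/0$ convention handles, and the observation that the unconstrained quadratic minimizer automatically satisfies the constraint. No property of $\spaceofparameter$ beyond $\spaceofparameter_j\subseteq\{\parameter_j\ge\parameter_1\}$ is used.
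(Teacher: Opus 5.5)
Your proof is correct and follows essentially the same route as the paper: keep only arms $1$ and $j$ and the constraint $\theta_j \ge \theta_1$, profile out $\theta_1$ via $\theta_1 = \min\{\mu_1, \theta_j\}$, and minimize the resulting one-dimensional function at the weighted average $(n_1\mu_1 + n_j\mu_j)/(n_1+n_j)$. Your separate treatment of the zero-count and $\Delta_j = 0$ cases is a little more explicit than the paper's, but the argument is the same.
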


\noindent{\bf Proof. }
Write $\pullcount_k=\pullcount_k(t)$. The claim is trivial if $\pullcount_1+\pullcount_j=0$. Otherwise,
\begin{align}
	2\bar\Lambda_t(j)
	&=
	\inf_{\parametervector\in\spaceofparameter_j}
	\sum_{i=1}^K \pullcount_i(\parameter_i-\mean_i)^2
	\ge
	\inf_{\parametervector\in\RR^K:\ \parameter_j\ge \parameter_r,\ \forall r}
	\sum_{i=1}^K \pullcount_i(\parameter_i-\mean_i)^2
	\notag\\
	&\ge
	\inf_{\parameter_j\ge \parameter_1}
	\Bigl\{
	\pullcount_1(\parameter_1-\mean_1)^2+\pullcount_j(\parameter_j-\mean_j)^2
	\Bigr\}.
	\label{eqn-lem-ratio-1}
\end{align}
For fixed $\parameter_j$, the inner minimization over $\parameter_1\le \parameter_j$ is attained at $\parameter_1=\min\{\mean_1,\parameter_j\}$. Thus \eqref{eqn-lem-ratio-1} reduces to minimizing $\pullcount_j(\mean_j-\parameter_j)^2+\pullcount_1(\mean_1-\parameter_j)_+^2$ over $\parameter_j\in\RR$. The minimizer is $(\pullcount_j\mean_j+\pullcount_1\mean_1)/(\pullcount_j+\pullcount_1)$, and the minimum value is $\Delta_j^2\pullcount_1(t)\pullcount_j(t)/[\pullcount_1(t)+\pullcount_j(t)]$. Dividing by $2$ proves the lemma.
\hfill$\square$

\begin{lemma}\label{lem-pullcount}
	Let $j\in[K]$, $\varepsilon>0$, and let $\{b(t)\}_{t=1}^\infty\subseteq[0,\infty)$ be increasing with $b(t)\to\infty$. Suppose that on an event $\cA$, there exists $S<\infty$ such that
	\[
	\PP(\action_t=j\mid\history_{t-1})\le t^{-(1+\varepsilon)}
	\qquad
	\text{whenever } t\ge S \text{ and } \pullcount_j(t)\ge b(t).
	\]
	Then there exists $\cA'\subseteq\cA$ with $\PP(\cA\setminus\cA')=0$ such that on $\cA'$,
	\[
	\limsup_{t\to\infty}\frac{\pullcount_j(t)}{b(t)}\le 1.
	\]
\end{lemma}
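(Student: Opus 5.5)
The plan is a conditional Borel--Cantelli argument (Lévy's extension). For any $\delta\in(0,1)$, define the event
\[
E_t=\{\action_t=j,\ t\ge S,\ \pullcount_j(t)\ge b(t)\},
\]
which is adapted, and note that on $\cA$ the hypothesis gives
\[
\sum_t \PP(E_t\mid\history_{t-1})\le\sum_t t^{-(1+\varepsilon)}<\infty.
\]
A subtlety is that $S$ is random. I would handle this either by assuming $S$ is a random time for which $\{t\ge S\}\in\history_{t-1}$, or, more robustly, by taking $\cA_s=\cA\cap\{S\le s\}$ for deterministic $s$ and working with the $\history_{t-1}$-measurable indicator
\[
F_t=\one\bigl(\PP(\action_t=j\mid\history_{t-1})\le t^{-(1+\varepsilon)},\ \pullcount_j(t)\ge b(t)\bigr).
\]
Then
\[
\sum_t \PP(\{\action_t=j\}\cap F_t\mid\history_{t-1})\le\sum_t t^{-(1+\varepsilon)}<\infty
\]
everywhere. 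Lévy's conditional Borel--Cantelli lemma then implies that, almost surely, only finitely many $t$ satisfy $\action_t=j$ and $F_t$. On $\cA$, for $t\ge S$ the condition $\pullcount_j(t)\ge b(t)$ implies $F_t$. Hence on $\cA'=\cA\cap\{\text{finitely many such } t\}$ there is a random $T_0$ such that for all $t\ge T_0$ we have $\action_t\neq j$ whenever $\pullcount_j(t)\ge b(t)$.

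The second step is deterministic on $\cA'$. For $t\ge T_0$, arm $j$ is pulled only when $\pullcount_j(t)<b(t)$. I would prove by induction that
\[
\pullcount_j(t)\le\max\{\pullcount_j(T_0),\ b(t-1)+1\}
\]
for $t>T_0$. If no pull occurs at $t-1$, the count is unchanged, and $b$ is increasing, so the bound persists. If a pull occurs at $t-1\ge T_0$, then $\pullcount_j(t-1)<b(t-1)$, so $\pullcount_j(t)<b(t-1)+1\le b(t)+1$. This gives $\pullcount_j(t)\le\max\{\pullcount_j(T_0),\ b(t)+1\}$. Dividing by $b(t)\to\infty$ yields $\limsup_t\pullcount_j(t)/b(t)\le1$.

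The main obstacle is only the measurability bookkeeping around the random $S$; using the $\history_{t-1}$-measurable indicator $F_t$ sidesteps it cleanly. The remaining steps are routine.
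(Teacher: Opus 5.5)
Your proposal is correct and follows essentially the same route as the paper: conditional Borel--Cantelli applied to $\{\action_t=j,\ \pullcount_j(t)\ge b(t)\}$, then a deterministic induction showing $\pullcount_j(t)\le b(t)+1$ eventually. Your bound $\max\{\pullcount_j(T_0),\, b(t)+1\}$ simply compresses the paper's two-case argument. The random-$S$ issue can also be handled without your indicator $F_t$, as the paper implicitly does: drop $t\ge S$ from the event, note that on $\cA$ the sum $\sum_t \PP(\cB_t\mid\history_{t-1})$ is finite because only the finitely many terms with $t<S$ can exceed $t^{-(1+\varepsilon)}$, and use L\'evy's form $\{\cB_t \text{ i.o.}\}=\{\sum_t \PP(\cB_t\mid\history_{t-1})=\infty\}$ a.s.
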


\noindent{\bf Proof. }
Let $\cB_t=\{\action_t=j,\ \pullcount_j(t)\ge b(t)\}$. On $\cA$, for all $t\ge S$,
\[
\PP(\cB_t\mid\history_{t-1})
=
\PP(\action_t=j\mid\history_{t-1})\one[\pullcount_j(t)\ge b(t)]
\le t^{-(1+\varepsilon)}.
\]
Since $\sum_{t=1}^\infty t^{-(1+\varepsilon)}<\infty$, the conditional Borel--Cantelli lemma \citep[Theorem 5.3.2]{Dur19} yields an event $\cA'\subseteq\cA$ with $\PP(\cA\setminus\cA')=0$ on which only finitely many $\cB_t$ occur.

We now work on the event $\cA'$ and show that $\pullcount_j(t) \leq b(t)+1$ holds whenever $t$ is sufficiently large. This immediately implies the desired result because $b(t)\to\infty$.

\begin{itemize}
	\item Case 1: $\cB_t$ never occurs. Note that $\pullcount_j(1) = 0 \leq b(1) + 1$ trivially holds. Suppose $\pullcount_j(t)\le b(t)+1$ holds for some $t \geq 1$. If $\pullcount_j(t)\le b(t)$, then $\pullcount_j(t+1)\leq 
	\pullcount_j(t) + 1
	\leq b(t) + 1 \le b(t+1)+1$; if $b(t)<\pullcount_j(t)\le b(t)+1$, then necessarily $\action_t\neq j$, so $\pullcount_j(t+1) = \pullcount_j(t) \le b(t)+1 \le b(t+1)+1$. By induction, one gets $\pullcount_j(t)\le b(t)+1$ for all $t$.
	\item Case 2: $\cB_t$ occurs at least once. Let $L$ be the time of the last occurrence. Then $\action_L=j$ and $\pullcount_j(L)\ge b(L)$, hence $\pullcount_j(L+1)=\pullcount_j(L)+1$. Let $\widetilde L$ be the smallest $t\ge L+1$ such that $\pullcount_j(L+1)\le b(t)+1$, which exists because $b(t)\to\infty$. For $t\in\{L+1,\dots,\widetilde L\}$, we have $\pullcount_j(t)>b(t)$ and, since $\cB_t$ cannot occur, $\action_t\neq j$. Consequently, $\pullcount_j(\widetilde L)=\pullcount_j(L+1)$. 
	The definition of $\widetilde L$ implies that $\pullcount_j(L+1)\le b(\widetilde{L})+1$ and thus, $\pullcount_j(\widetilde L)\leq b(\widetilde{L})+1$. The induction in Case 1 shows $\pullcount_j(t)\le b(t)+1$ for all $t\ge \widetilde L$. 
\end{itemize}
\hfill$\square$

\subsection{Proof of Inequality \eqref{eqn-pathwise-bound}}

Without loss of generality, assume that $\opt=1$. 

\noindent
{\bf Step 1: A lower bound on pulls of the optimal arm.}
Fix $C>2$ and let $A=(C/2)\sum_{i=1}^K \sigma_i^2$. 
Let $\Omega_{\mathrm L}$ be the almost-sure event on which the second conclusion of \Cref{lem-lil} holds.
On $\Omega_{\mathrm L}$, there exists $T_{\mathrm L}<\infty$ such that
\begin{align}
	\frac12\sum_{i=1}^K \pullcount_i(t)\bigl[\mean_i-\hat\mean_i(t)\bigr]^2
	\le
	A\log\log t,
	\qquad \forall t\ge T_{\mathrm L}.
	\label{eqn-lambda-one-upper-structured}
\end{align}
By \eqref{eqn-thm-asymptotic-1},
\begin{align}
	\distQ_{t-1}(1)
	\ge
	\frac{1}{K(\log t)^A},
	\qquad \forall t\ge T_{\mathrm L}.
	\label{eqn-q-one-lower-structured}
\end{align}

Next, we relate the posterior mass to the number of pulls. Define
\[
M_t = \pullcount_1(t+1) -  \sum_{s=1}^t \distQ_{s-1}(1) = \sum_{s=1}^t [ \one(\action_s=1)-
\PP ( \action_s = 1 | \history_{s-1} ) 
].
\]
Then \(\{M_t\}_{t=1}^{\infty}\) is a martingale with respect to $\{ \history_t \}_{t=1}^{\infty}$, with bounded increments \(|M_t-M_{t-1}|\le 1\). By Azuma--Hoeffding inequality \citep{Azu67},
\[
\PP\bigl(|M_t|\ge x\bigr)\le 2 e^{-x^2/(2t)}
,\qquad \forall x \geq 0.
\]
Hence, $\sum_{t=1}^{\infty} \PP\bigl(|M_t|\ge 2 \sqrt{t \log t} \bigr) \leq \sum_{t=1}^{\infty} 2 t^{-2} < \infty$. By the Borel--Cantelli lemma, with probability 1, there exists $T_{\mathrm{A}}< \infty$ such that $|M_t| \leq 2 \sqrt{t \log t}$, $\forall t \geq T_{\mathrm{A}}$. Denote by $\Omega_{\mathrm{A}}$ the above event.

Suppose that $\Omega_{\mathrm{L}} \cap \Omega_{\mathrm{A}}$ occurs. The bound \eqref{eqn-q-one-lower-structured} implies that when $t \geq 2 T_{\mathrm{L}}$, we have
\begin{align*}
	\sum_{s=1}^{t}
	\distQ_{s-1}(1)
	& \geq \sum_{s= T_{\mathrm{L}} }^{t}
	\distQ_{s-1}(1)
	\geq \frac{ t - T_{\mathrm{L}} }{
		K(\log t)^A
	}
	\geq \frac{ t }{
		2K(\log t)^A
	}.
\end{align*}
Hence, when $t \geq \max \{ 2 T_{\mathrm{L}}, T_{\mathrm{A}} \}$,
\begin{align*}
	\pullcount_1(t+1) = 
	\sum_{s=1}^{t}
	\distQ_{s-1}(1) + M_t
	\geq  \frac{ t }{
		2K(\log t)^A
	} - 2 \sqrt{t \log t}.
\end{align*}
Since $\sqrt{t\log t}=o ( t(\log t)^{-A} )$, there exists $T_1< \infty$ such that 
\begin{equation}
	\pullcount_1(t)
	\geq 
	\frac{ t }{
		3K(\log t)^A
	} 	,
	\qquad \forall t\ge T_1.
	\label{eqn-optimal-arm-polylog-lower}
\end{equation}

\medskip
\noindent
{\bf Step 2: A posterior upper bound for a suboptimal arm.}
Fix $j\in[K]\setminus\{1\}$. We show that if $\pullcount_j(t)$ is large, then $\distQ_{t-1}(j)$ must be small.

Fix $\varepsilon\in(0,1)$. Let $r_t=\sum_{i=1}^K \pullcount_i(t)[\mean_i-\hat\mean_i(t)]^2$ and $d_j=(1-\varepsilon)\Delta_j^2/2$. In the rest of the proof, we adopt the convention that $0/0=0$. By \eqref{eqn-thm-asymptotic-2}, \Cref{lem-sandwich} and \Cref{lem-ratio},
\begin{align*}
	-\log \distQ_{t-1}(j) \geq
	\Lambda(j,\dataset_{t-1}) - \Lambda(1,\dataset_{t-1}) 
	\geq (1-\varepsilon) \bar\Lambda_t(j) - 2 \varepsilon^{-1} r_t
	\geq  d_j  \cdot 
	\frac{\pullcount_1(t) \pullcount_j(t)}
	{ \pullcount_1(t)+ \pullcount_j(t)} - 2 \varepsilon^{-1} r_t.
\end{align*}
On $\Omega_{\mathrm L}$, \Cref{lem-lil} gives $r_t\le 2A\log\log t$ for all $t\ge T_{\mathrm L}$. Hence,
\begin{align}
	-\log \distQ_{t-1}(j)
	\ge
	d_j\cdot \frac{\pullcount_1(t)\pullcount_j(t)}{\pullcount_1(t)+\pullcount_j(t)}
	-4A\varepsilon^{-1}\log\log t,
	\qquad \forall t\ge T_{\mathrm L}.
	\label{eqn-posterior-upper-j}
\end{align}

Now fix $\eta\in(0,1/2)$. Since $\log\log t=o(\log t)$, there exists $m<\infty$ such that $4A\varepsilon^{-1}\log\log t<\eta\log t$ for all $t\ge m$. 
To bound $\frac{\pullcount_1(t) \pullcount_j(t)}
{ \pullcount_1(t)+ \pullcount_j(t)} $ from below, we invoke an elementary inequality:
\[
\frac{ab}{a+b}\ge \min\{\eta a,(1-\eta)b\},\qquad \forall a \geq 0,~~b \geq 0,~~\eta \in[0,1].
\]
Indeed, if \(\eta \le b/(a+b)\), then \(\eta a\le ab/(a+b)\); otherwise \(1-\eta<a/(a+b)\), so \((1-\eta)b\le ab/(a+b)\).
Thus,
\[
\frac{\pullcount_1(t)\pullcount_j(t)}{\pullcount_1(t)+\pullcount_j(t)}
\ge
\min\{\eta\pullcount_1(t),(1-\eta)\pullcount_j(t)\}.
\]
On $\Omega_{\mathrm L}\cap\Omega_{\mathrm A}$, the lower bound \eqref{eqn-optimal-arm-polylog-lower} implies that there exists $m'\ge T_{\mathrm L}$ such that $\eta\pullcount_1(t)\ge (1+2\eta)\log t/d_j$ for all $t\ge m'$. Combining these bounds with \eqref{eqn-posterior-upper-j}, we obtain
\[
-\log \distQ_{t-1}(j)\ge
\min\{(1+2\eta)\log t,\ (1-\eta)d_j\pullcount_j(t)\}-\eta\log t,
\qquad \forall t\ge \max\{m,m'\}.
\]
Define $T_j = \max \{ m, m' \}$ and $b_j(t)=
\frac{1+2\eta}{1-\eta}\cdot \frac{\log t}{d_j}$. On the event $\Omega_{\mathrm{L}} \cap \Omega_{\mathrm{A}}$, we have
\begin{equation*}
	\distQ_{t-1}(j)\le t^{-(1+\eta)}
	\qquad \text{whenever } t \geq T_j \text{ and } \pullcount_j(t)\ge b_j(t)  .
\end{equation*}

\medskip
\noindent
{\bf Step 3: A pathwise logarithmic bound for suboptimal pulls.}
Applying \Cref{lem-pullcount} with $b(t)=b_j(t)$, $\cA=\Omega_{\mathrm L}\cap\Omega_{\mathrm A}$, and $S=T_j$, we get
\[
\limsup_{T\to\infty}\frac{\pullcount_j(T)}{\log T}
\le
\frac{1+2\eta}{1-\eta}\cdot \frac{1}{d_j}
=
\frac{1+2\eta}{(1-\eta)(1-\varepsilon)}\cdot \frac{2}{\Delta_j^2}
\qquad\text{a.s.}
\]
Letting $\eta\downarrow0$ and $\varepsilon\downarrow0$ yields \eqref{eqn-pathwise-bound}.

\section{Proof of \Cref{thm-sharp}}\label{sec-thm-sharp-proof}

Following the outline in \Cref{sec-thm-sharp-proof-sketch}, we will prove \Cref{lem-arm2-log-rate} and then establish \eqref{eqn-unimodal-0} for the unimodal bandit.

\subsection{Proof of \Cref{lem-arm2-log-rate}}\label{sec-lem-arm2-log-rate-proof}

Without loss of generality, assume that \(\opt=1\) and \(j=2\). In view of \eqref{eqn-pathwise-bound}, it remains to prove
\begin{align}
	\liminf_{T\to\infty} \frac{ \pullcount_2(T) }{ \log T } \geq \frac{2}{ \Delta_2^2 }
	\qquad\text{a.s.}
	\label{eqn-arm2-log-rate}
\end{align}
We will show that if $\pullcount_2(t)$ is small, then the posterior mass on arm 2 is large. By \eqref{eqn-ratio},
\[
\PP(\action_t = 2 \mid \history_{t-1})=\distQ_{t-1}(2)=\distQ_{t-1}(1)e^{-[\Lambda(2,\dataset_{t-1})-\Lambda(1,\dataset_{t-1})]}.
\]
Define \(r_t=\sum_{i=1}^K \pullcount_i(t)[\hat\mean_i(t)-\mean_i]^2\) and choose any $\gamma \in (0,1)$. \Cref{lem-sandwich} gives
\[
\Lambda(2,\dataset_{t-1})-\Lambda(1,\dataset_{t-1})
\le
(1+\gamma)\inf_{\parametervector\in\spaceofparameter_2}
\bigg\{
\frac12\sum_{r=1}^K \pullcount_r(t)(\parameter_r-\mean_r)^2
\bigg\}
+2\gamma^{-1}r_t.
\]
By the assumption \(  (\mean_1,\mean_1,\mean_3,\dots,\mean_K) \in\spaceofparameter_2\), the infimum is at most \((\Delta_2^2/2)\pullcount_2(t)\). Hence,
\[
\PP(\action_t = 2 \mid \history_{t-1})
\geq \distQ_{t-1}(1) 
\exp\bigg(
- 
\frac{(1+\gamma) \Delta_2^2}{2} \pullcount_2(t)
- 2\gamma^{-1}r_t
\bigg).
\]

Fix any \(C>2\) and let \(A=(C/2)\sum_{i=1}^K \sigma_i^2\). Let \(\Omega_{\mathrm L}\) be the almost-sure event on which the second conclusion of \Cref{lem-lil} holds. By \eqref{eqn-lambda-one-upper-structured} and \eqref{eqn-q-one-lower-structured} in the proof of \Cref{thm-asymptotic}, on \(\Omega_{\mathrm L}\) there exists \(T_{\mathrm L}<\infty\) that is independent of $\gamma$, such that 
\[
\PP(\action_t = 2 \mid \history_{t-1})
\geq \frac{1}{K(\log t)^A}
\exp\bigg(
- 
\frac{(1+\gamma) \Delta_2^2}{2} \pullcount_2(t) 
- 4 \gamma^{-1} \log \log t
\bigg),
\qquad \forall t\ge T_{\mathrm L}.
\]

Fix \(\beta\in(0,2/\Delta_2^2)\). Choose \(\gamma\in(0,1)\) and \(\varepsilon\in(0,1/2)\) such that \((1+\gamma)\beta\Delta_2^2/2\le 1-2\varepsilon\). If $t\ge T_{\mathrm L}$ and \(\pullcount_2(t)\le \beta\log t\), we have $ (1+\gamma) \Delta_2^2 \pullcount_2(t) / 2  \leq (1-2 \varepsilon) \log t$ and thus,
\[
\PP(\action_t = 2 \mid \history_{t-1})
\geq  \frac{
	t^{-(1-2\varepsilon)}
}{K(\log t)^{(1+4/\gamma) A}}.
\]
Hence, there exists $T_2 < \infty$ such that
\begin{align}
	\PP(\action_t = 2 \mid \history_{t-1})
	\geq t^{-(1-\varepsilon)}
	\qquad\text{whenever } t\ge T_2 \text{ and } \pullcount_2(t)\le \beta\log t.
	\label{eqn-arm2-log-rate-0}
\end{align}

We invoke a technical lemma that mirrors \Cref{lem-pullcount}.

\begin{lemma}\label{lem-dyadic-threshold-lower}
	Let \(j \in [K]\), \(\varepsilon\in(0,1)\), and \(\beta>0\). Suppose that on an event \(\cA\), there exists \(S < \infty\) such that
	\[
	\PP (\action_t = j \mid \history_{t-1}) \geq t^{-(1-\varepsilon)}
	\qquad
	\text{whenever } t \geq S \text{ and } \pullcount_j(t) \leq \beta \log t.
	\]
	Then there exists \(\cA' \subseteq \cA\) with \(\PP ( \cA \setminus \cA') = 0\) such that on \(\cA'\), \(\liminf_{T\to\infty} \pullcount_j(T)/\log T \geq \beta\).
\end{lemma}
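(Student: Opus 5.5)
We argue along a dyadic time grid. The plan is to show that on each block the arm is pulled so often, when its count is low, that the count cannot stay below $\beta\log t$. Fix $\beta'\in(0,\beta)$; it suffices to prove $\liminf_{T}\pullcount_j(T)/\log T\ge\beta'$ on $\cA$ almost surely, and then let $\beta'\uparrow\beta$ along a countable sequence.

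For $m\in\ZZ_+$ consider the block $\mathcal{I}_m=\{2^m,\dots,2^{m+1}-1\}$. Define the bad event
\[
\cB_m=\bigl\{\pullcount_j(2^{m+1})\le \beta\log 2^{m}\bigr\}.
\]
On $\cB_m$, monotonicity of $\pullcount_j$ gives $\pullcount_j(t)\le\beta\log 2^m\le\beta\log t$ for every $t\in\mathcal{I}_m$. Hence, once $2^m\ge S$, on $\cA\cap\cB_m$ the conditional pull probability satisfies $\PP(\action_t=j\mid\history_{t-1})\ge t^{-(1-\varepsilon)}\ge 2^{-(m+1)(1-\varepsilon)}$ throughout the block. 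Summed over the $2^m$ rounds, the expected number of pulls in the block is at least $c\,2^{m\varepsilon}$, which is much larger than $\beta m\log 2$. So the count could not actually stay below $\beta\log 2^m$.

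To make this rigorous without conditioning on the random $S$, I would use a martingale or Freedman-type bound. Let $p_t=\PP(\action_t=j\mid\history_{t-1})\one\{\pullcount_j(t)\le\beta\log t\}$ and $X_t=\one\{\action_t=j,\ \pullcount_j(t)\le \beta\log t\}$. Then $\sum_{t\in\mathcal{I}_m}(X_t-p_t)$ is a sum of martingale differences with conditional variance at most $\sum p_t$. Freedman's inequality, or the exponential supermartingale $\exp(\sum(-\lambda X_t+(1-e^{-\lambda})p_t))$, bounds
\[
\PP\Bigl(\sum_{t\in\mathcal{I}_m}X_t\le \tfrac12 \sum_{t\in\mathcal{I}_m}p_t,\ \sum_{t\in\mathcal{I}_m}p_t\ge c2^{m\varepsilon}\Bigr)\le e^{-c'2^{m\varepsilon}},
\]
and this is summable in $m$. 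Alternatively, one can use the conditional Borel--Cantelli lemma (Lévy's extension), since $\sum_t X_t$ and $\sum_t p_t$ are comparable almost surely when $\sum_t p_t=\infty$. The dyadic version gives the cleanest quantitative control.

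Now work on $\cA$ with $2^m\ge S$ and on $\cB_m$. In that case $X_t=\one\{\action_t=j\}$ and $p_t\ge 2^{-(m+1)(1-\varepsilon)}$ for all $t\in\mathcal{I}_m$. So $\sum p_t\ge c2^{m\varepsilon}$, and outside the exceptional event the block contains at least $\tfrac c2 2^{m\varepsilon}>\beta\log 2^{m+1}$ pulls of $j$ for large $m$. That contradicts $\cB_m$. By Borel--Cantelli, almost surely on $\cA$ the events $\cB_m$ occur only finitely often.

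It remains to pass from the dyadic grid to all $T$. For $T\in[2^{m+1},2^{m+2})$ with $m$ large, $\pullcount_j(T)\ge\pullcount_j(2^{m+1})>\beta m\log 2$. Also $\log T\le (m+2)\log 2$, so $\pullcount_j(T)/\log T\ge\beta m/(m+2)\to\beta$. This already gives $\liminf\ge\beta$ directly, so the $\beta'$ device is not even needed.

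The main obstacle is the concentration step: handling the random threshold $S$ and the indicator-restricted probabilities uniformly. I would deal with this by defining $X_t,p_t$ with the indicator built in, applying the deterministic-parameter Freedman bound on each block, and only afterwards intersecting with $\cA$ and $\{2^m\ge S\}$.
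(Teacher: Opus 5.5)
Your proposal is correct and follows essentially the same route as the paper. Both arguments use dyadic blocks and the monotonicity of the pull count, which keeps it below $\beta\log t$ across the whole block. Both then apply the exponential supermartingale $\exp\bigl(\sum(-\lambda X_t+(1-e^{-\lambda})p_t)\bigr)$ with Markov's inequality, invoke Borel--Cantelli, and interpolate between dyadic times.

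The one difference is minor. You apply the ordinary Borel--Cantelli lemma to exceptional events that involve neither $\cA$ nor the random threshold $S$, and only afterwards intersect with $\cA\cap\{2^m\ge S\}$. This handles the randomness of $S$ slightly more cleanly than the paper, which bounds $\PP(F_m\mid\history_{2^m-1})$ on $\cA$ and then invokes the conditional Borel--Cantelli lemma.
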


\Cref{lem-dyadic-threshold-lower} and \eqref{eqn-arm2-log-rate-0} imply that \(\liminf_{T\to\infty}\pullcount_2(T)/\log T\ge \beta\) a.s. Since this holds for every \(\beta\in(0,2/\Delta_2^2)\), we obtain \eqref{eqn-arm2-log-rate}. It remains to prove \Cref{lem-dyadic-threshold-lower}.

\noindent
{\bf Proof. }
Let \(m_0=\lceil \log_2 S\rceil+1\), \(b_m=\beta (m-1)_+\log 2\), and \(F_m=\{\pullcount_j(2^{m+1})<b_{m+1}\}\). We claim that on \(\cA\), \(\sum_{m=1}^\infty \PP(F_m\mid \history_{2^m-1})<\infty\). Then the conditional Borel--Cantelli lemma implies that \(F_m\) occurs only finitely often on some \(\cA'\subseteq\cA\) with \(\PP(\cA\setminus\cA')=0\). Hence, on $\cA'$, there exists $M < \infty$ such that \(\pullcount_j(2^m)\ge b_m\) holds for all \(m \geq M\).  When $t \geq 2^M$, we let \(m=\lfloor \log_2 t\rfloor\) and derive that
\[
\pullcount_j(t)\ge \pullcount_j(2^m)\ge b_m
=\beta (m-1)_+\log 2
\ge \beta(\log_2 t-2)\log 2,
\]
yielding the conclusion. It remains to prove the claim on summability.

Fix \(m\ge m_0\) and let \(I_m=\{2^m,\dots,2^{m+1}-1\}\). 
We will use a martingale argument to bound $\PP(F_m\mid \history_{2^m-1})$.
On \(F_m\), monotonicity gives \(\pullcount_j(t)\le \pullcount_j(2^{m+1})<b_{m+1}=\beta m\log 2=\beta\log(2^m)\le \beta\log t\) for all \(t\in I_m\). Hence,
\[
F_m\subseteq
\bigg\{
\pullcount_j(t)\le \beta\log t \text{ for all } t\in I_m,
\text{ and }
\sum_{t\in I_m}\one(\action_t=j)\le b_{m+1}
\bigg\}.
\]
Define \(Y_t=\one[\action_t=j\text{ and }\pullcount_j(t)\le \beta\log t]\) and \(q_t=\EE(Y_t\mid \history_{t-1})\). Then on \(F_m\), we have \(\sum_{t\in I_m}Y_t\le b_{m+1}\). Also, since \(\pullcount_j(t)\) is \(\history_{t-1}\)-measurable, \(q_t=\PP(\action_t=j\mid \history_{t-1})\one[\pullcount_j(t)\le \beta\log t]\ge t^{-(1-\varepsilon)}\one[\pullcount_j(t)\le \beta\log t]\), and therefore on \(F_m\),
\[
\sum_{t\in I_m} q_t
\ge
\sum_{t=2^m}^{2^{m+1}-1} t^{-(1-\varepsilon)}
\ge
2^m (2^{m+1})^{-(1-\varepsilon)}
>
2^{\varepsilon m-1}.
\]
Thus,
\begin{align}
	F_m\subseteq \bigg\{\sum_{t\in I_m}Y_t\le b_{m+1} \text{ and } \sum_{t\in I_m} q_t\ge 2^{\varepsilon m-1} \bigg\}.
	\label{eqn-lem-dyadic-threshold-lower}
\end{align}

Set \(L_{2^m-1}=1\) and \(L_t=\exp\!\left(-\sum_{s=2^m}^t Y_s +(1-e^{-1})\sum_{s=2^m}^t q_s\right)\) for \(t\ge 2^m\). Since \(Y_t\mid \history_{t-1}\sim \mathrm{Bernoulli}(q_t)\), we have \(\EE(e^{-Y_t}\mid \history_{t-1})=1-q_t(1-e^{-1})\le e^{-(1-e^{-1})q_t}\), so \(\{L_t\}_{t = 2^m-1}^{\infty}\) is a supermartingale and \(\EE(L_{2^{m+1}-1}\mid \history_{2^m-1})\le 1\). On \(F_m\), \eqref{eqn-lem-dyadic-threshold-lower} forces that \(L_{2^{m+1}-1}\ge \exp\!\left(-b_{m+1}+(1-e^{-1})2^{\varepsilon m-1}\right)\). Hence Markov's inequality gives
\[
\PP(F_m\mid \history_{2^m-1})
\le
\exp\!\left(b_{m+1}-(1-e^{-1})2^{\varepsilon m-1}\right).
\]
Since \(b_{m+1}\le \beta m\log 2\), the right-hand side is summable in \(m\), proving the claim.
\hfill$\square$

\subsection{Proof of \eqref{eqn-unimodal-0} for the unimodal bandit}\label{sec-eqn-unimodal-0-proof}

By symmetry, it suffices to consider \(j\le \opt-2\). We will fix any \(\gamma>0\) and then prove
\begin{align}
	\limsup_{T\to\infty}\frac{\pullcount_j(T)}{\log T}
	\le \frac{2\gamma}{\Delta_{\opt-1}^2}
	\qquad\text{a.s.}
	\label{eqn-unimodal-1}
\end{align}
Since \(\gamma>0\) is arbitrary, this implies \eqref{eqn-unimodal-0}.

By \eqref{eqn-thm-asymptotic-2}, $\PP(\action_t=j\mid \history_{t-1}) \leq 	e^{-[\Lambda(j,\dataset_t)-\Lambda(\opt,\dataset_t)]}$. Choose \(\varepsilon \in (0,1) \) such that \((1-\varepsilon)^2(1-\varepsilon+\gamma)\ge 1+2\varepsilon\). By \Cref{lem-sandwich}, we have
\[
\Lambda(j,\dataset_{t-1})-\Lambda(\opt,\dataset_{t-1})
\ge
(1-\varepsilon)\bar\Lambda_t(j)-2\varepsilon^{-1} r_t,
\]
where $r_t = \sum_{i=1}^K \pullcount_i(t)[\hat\mean_i(t)-\mean_i]^2$. 
Choose \(C>2\) and let \(A=(C/2)\sum_{i=1}^K \sigma_i^2\). By \Cref{lem-lil}, with probability \(1\), there exists \(T_{\mathrm{L}}<\infty\) such that $r_t \leq 2 A \log\log t$ holds for all $ t\ge T_{\mathrm{L}}$. Since $\log\log t = o(\log t)$, there exists $T_0 < \infty$ such that 
\begin{align}
	\Lambda(j,\dataset_{t-1})-\Lambda(\opt,\dataset_{t-1})
	\ge
	(1-\varepsilon)\bar\Lambda_t(j)- \varepsilon \log t,
	\qquad \forall t\ge T_{0}.
	\label{eqn-unimodal-2}
\end{align}

Next, we invoke a lemma to bound $\bar\Lambda_t(j)$ from below.
\begin{lemma}\label{lem-unimodal-profile}
	Suppose that $\vectorofmeans$ belongs to the unimodal space in \eqref{eqn-unimodal}. Define $\opt \in \argmax_{i\in[K]} \mean_i$ and $\Delta_i = \mean_{\opt} - \mean_i$. Let \(\bar\Lambda_t(\cdot)\) be given in \Cref{lem-sandwich}. Choose \(j\in[K]\) with \(|j-\opt|\ge 2\), and define \(\ell=\opt-1\) when \(j<\opt\), and \(\ell=\opt+1\) when \(j>\opt\). Then
	\[
	\bar\Lambda_t(j)
	\ge
	\frac{\Delta_{\ell}^2}{2}\cdot
	\frac{\bigl[\pullcount_j(t)+\pullcount_{\ell}(t)\bigr]\pullcount_{\opt}(t)}
	{\pullcount_j(t)+\pullcount_{\ell}(t)+\pullcount_{\opt}(t)}.
	\]
\end{lemma}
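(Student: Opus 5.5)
Since $j\le \opt-2$ when $j<\opt$ (the case $j>\opt$ is the mirror image), we have $j<\ell<\opt$ with $\ell=\opt-1$. The plan is to reduce the infimum defining $\bar\Lambda_t(j)$ to a three-variable problem in the coordinates $(\parameter_j,\parameter_\ell,\parameter_\opt)$. Then collapse it to a one-variable problem exactly as in the proof of \Cref{lem-ratio}. Write $\pullcount_k=\pullcount_k(t)$.

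\textbf{Step 1 (structure of $\spaceofparameter_j$).} Take any $\parametervector$ in $\spaceofparameter_j$, so $\parametervector$ is unimodal and $\parameter_j=\max_k\parameter_k$. I claim that $\parameter_j\ge\parameter_{j+1}\ge\cdots\ge\parameter_K$. Indeed, $\parametervector$ lies in some $\spaceofparameter_i$ from \eqref{eqn-unimodal}. If $i\le j$, this is immediate. If $i>j$, then $\parameter_j\le\cdots\le\parameter_i\le\parameter_j$, so these entries are all equal and the sequence is nonincreasing from $j$ onward. In particular $\parameter_j\ge\parameter_\opt$ and $\parameter_\ell\ge\parameter_\opt$. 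Dropping all other (nonnegative) terms gives
\[
2\bar\Lambda_t(j)\ge \inf_{a\ge c,\ b\ge c}\Bigl\{\pullcount_j(a-\mean_j)^2+\pullcount_\ell(b-\mean_\ell)^2+\pullcount_\opt(c-\mean_\opt)^2\Bigr\}.
\]

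\textbf{Step 2 (eliminate $a,b$).} Fix $c$. Minimizing over $a\ge c$ and $b\ge c$ gives $\pullcount_j(c-\mean_j)_+^2+\pullcount_\ell(c-\mean_\ell)_+^2$. Since $\vectorofmeans$ is unimodal with mode $\opt$ and $j<\ell<\opt$, we have $\mean_j\le\mean_\ell$. Hence $(c-\mean_j)_+\ge(c-\mean_\ell)_+$, and this is the key monotonicity step. The bound becomes
\[
2\bar\Lambda_t(j)\ge\inf_{c\in\RR}\Bigl\{(\pullcount_j+\pullcount_\ell)(c-\mean_\ell)_+^2+\pullcount_\opt(c-\mean_\opt)^2\Bigr\}.
\]

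\textbf{Step 3 (one-dimensional minimization).} This is the same problem solved in \Cref{lem-ratio}, with weights $a=\pullcount_j+\pullcount_\ell$ and $\pullcount_\opt$ and gap $\mean_\opt-\mean_\ell=\Delta_\ell$. The objective is increasing for $c>\mean_\opt$ and constant-free below $\mean_\ell$, so the minimizer is the weighted average $(a\mean_\ell+\pullcount_\opt\mean_\opt)/(a+\pullcount_\opt)$, which lies in $[\mean_\ell,\mean_\opt]$. The minimum value is $\Delta_\ell^2\,a\pullcount_\opt/(a+\pullcount_\opt)$. The degenerate case $a+\pullcount_\opt=0$ is covered by the convention $0/0=0$. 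Dividing by $2$ yields the lemma.

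I expect the only delicate point to be Step 1. One must handle possible ties in the maximum of $\parametervector$ correctly, so that membership in $\spaceofparameter_j$ really forces $\parameter_\ell\ge\parameter_\opt$. The remaining steps are routine calculus.
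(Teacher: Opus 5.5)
Your proof is correct and follows essentially the same route as the paper: restrict to the coordinates $(\theta_j,\theta_\ell,\theta_{\opt})$, eliminate the first two, use $\mean_j\le\mean_\ell$ to merge the weights into $\pullcount_j+\pullcount_\ell$, and finish with the one-sided quadratic problem from \Cref{lem-ratio}. The differences are minor. You drop the constraint $\theta_j\ge\theta_\ell$ so that $a$ and $b$ decouple, whereas the paper keeps the chain $a\ge b\ge c$ and minimizes $b$ over $[c,\infty)$ via the shape of $g$; both reach the same reduced problem. You also spell out the tie case showing that membership in $\spaceofparameter_j$ forces a nonincreasing tail from $j$ onward, which the paper simply asserts.
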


Recall that $j\le \opt-2$. Then, \Cref{lem-unimodal-profile} gives
\[
\bar\Lambda_t(j)
\ge
\frac{\Delta_{\opt-1}^2}{2}\,
\bigl[\pullcount_j(t)+\pullcount_{\opt-1}(t)\bigr]
\frac{\pullcount_{\opt}(t)}
{\pullcount_j(t)+\pullcount_{\opt-1}(t)+\pullcount_{\opt}(t)}.
\]
The estimates \eqref{eqn-pathwise-bound} and \eqref{eqn-optimal-arm-polylog-lower} imply that \(\pullcount_{\opt}(t)/[\pullcount_j(t)+\pullcount_{\opt-1}(t)+\pullcount_{\opt}(t)]\to 1\) a.s. Also, applying \Cref{lem-arm2-log-rate} to arm $(\opt-1)$ gives \(\pullcount_{\opt-1}(t)/\log t\to 2/\Delta_{\opt-1}^2\) a.s. Hence, with probability \(1\), there exists \(T_1<\infty\) such that for all \(t\ge T_1\),
\[
\frac{\pullcount_{\opt}(t)}
{\pullcount_j(t)+\pullcount_{\opt-1}(t)+\pullcount_{\opt}(t)}
\ge 1-\varepsilon
\qquad\text{and}\qquad
\pullcount_{\opt-1}(t)\ge \frac{2(1-\varepsilon)\log t}{\Delta_{\opt-1}^2}.
\]
On this event,
\[
\bar\Lambda_t(j)
\ge
\frac{\Delta_{\opt-1}^2}{2}
\bigg(
\pullcount_j(t)+\frac{2(1-\varepsilon)\log t}{\Delta_{\opt-1}^2}
\bigg)(1-\varepsilon),
\qquad \forall t\ge T_1.
\]
Therefore, whenever $t\ge T_1$ and \(\pullcount_j(t)\ge 2\gamma \Delta_{\opt-1}^{-2}\log t\), we have \(\bar\Lambda_t(j)\ge (1-\varepsilon)(1-\varepsilon+\gamma)\log t\). 

Combining this with \eqref{eqn-unimodal-2} and the assumption \((1-\varepsilon)^2(1-\varepsilon+\gamma)\ge 1+2\varepsilon\), we obtain that
\[
\Lambda(j,\dataset_{t-1})-\Lambda(\opt,\dataset_{t-1})
\ge
(1+\varepsilon)\log t
\qquad  \text{ whenever } t \geq \max\{ T_0, T_1\}
\text{ and }
\pullcount_j(t)\ge \frac{ 2\gamma }{ \Delta_{\opt-1}^{2} }\log t.
\]
The above lower bound on $\Lambda(j,\dataset_{t-1})-\Lambda(\opt,\dataset_{t-1})$ implies  \(\PP(\action_t=j\mid \history_{t-1})\le t^{-(1+\varepsilon)}\). Applying \Cref{lem-pullcount} with \(b(t)=2\gamma \Delta_{\opt-1}^{-2}\log t\) proves \eqref{eqn-unimodal-1}. It remains to prove \Cref{lem-unimodal-profile}.

\noindent
{\bf Proof. } By symmetry, it suffices to prove the result for \(j\le \opt-2\). Write \(N_r=\pullcount_r(t)\). If \(\parametervector\in\spaceofparameter_j\), then \(\theta_j\ge \theta_{j+1}\ge \cdots \ge \theta_{\opt-1}\ge \theta_{\opt}\). Therefore
\[
2\bar\Lambda_t(j)\ge
\inf_{a\ge b\ge c}
\Big\{
N_j(a-\mean_j)^2+N_{\opt-1}(b-\mean_{\opt-1})^2+N_{\opt}(c-\mean_{\opt})^2
\Big\}.
\]
For fixed \((b,c)\), the minimizer over \(a\ge b\) is \(a=\max\{b,\mean_j\}\). Hence
\[
2\bar\Lambda_t(j)
\ge
\inf_{b\ge c}
\Big\{
N_j(b-\mean_j)_+^2+N_{\opt-1}(b-\mean_{\opt-1})^2+N_{\opt}(c-\mean_{\opt})^2
\Big\}.
\]
Define $g(b) = N_j(b-\mean_j)_+^2+N_{\opt -1}(b-\mean_{\opt -1})^2$. Since \(\mean_j\le \mean_{\opt -1}\), the function \(g\) is nonincreasing on
\((-\infty,\mean_{\opt -1}]\) and nondecreasing on \([\mean_{\opt -1},\infty)\).
Therefore, the minimizer of \(g\) over \([c,\infty)\) is $b_c=\max\{c,\mean_{\opt -1}\}$. Since $(b_c-\mean_j)_+\ge (b_c-\mean_{\opt -1})_+=(c-\mean_{\opt -1})_+$, we obtain
\[
2\bar\Lambda_t(j)
\ge
\inf_{c\in\RR}
\Big\{
(N_j+N_{\opt -1})(c-\mean_{\opt -1})_+^2
+
N_{\opt }(c-\mean_{\opt })^2
\Big\}.
\]
The right-hand side is the same one-sided quadratic minimization problem as in the proof of \Cref{lem-ratio}, with effective count \(N_j+N_{\opt -1}\) and gap \(\Delta_{\opt -1}\). Its minimum value is $\Delta_{\opt -1}^2 
\frac{(N_j+N_{\opt -1})N_{\opt }}
{N_j+N_{\opt -1}+N_{\opt }}
$. Dividing by \(2\) gives the desired result.
\hfill$\square$

{
\bibliographystyle{ims}
\bibliography{bib}
}

\end{document}